\newtheorem{Lemma}{Lemma}[section]
\newtheorem{Theorem}[Lemma]{Theorem}
\newtheorem{Proposition}[Lemma]{Proposition}
\newtheorem{Corollary}[Lemma]{Corollary}
\newtheorem{Definition}[Lemma]{Definition}
\numberwithin{equation}{section}
\newcommand {\C} {{\mathbb C}}
\DeclareMathOperator {\supp} {supp}
\newcommand {\Q} {{\mathbb Q}}
\newcommand {\R} {{\mathbb R}}
\renewcommand {\H} {{\mathbb H}}
\newcommand {\Z} {{\mathbb Z}}
\newcommand {\absolute}[1] {\left| {#1} \right|}
\newcommand {\norm}[1] {\left\| {#1} \right\|}
\newcommand {\vol}{\mathrm{vol}}
\newcommand {\area}{\mathrm{area}}
\DeclareMathOperator {\PSL} {PSL}
\DeclareMathOperator {\SL} {SL}
\DeclareMathOperator {\SO} {SO}
\DeclareMathOperator {\diam} {diam}
\begin{document}

\title[Joint Quasimodes and QUE]{Joint Quasimodes, Positive Entropy, and Quantum Unique Ergodicity}
\author{Shimon Brooks and Elon Lindenstrauss}
\thanks{S.B. was partially supported by NSF grant DMS-1101596.  E.L. was
supported by the ERC, NSF grant DMS-0800345, and ISF grant 983/09.}
\begin{abstract}
We  
 study joint quasimodes of the Laplacian and one Hecke operator on compact congruence surfaces, and give conditions on the orders of the quasimodes that guarantee positive entropy on almost every ergodic component of the corresponding semiclassical measures.  Together with the measure classification result of \cite{Lin}, this implies Quantum Unique Ergodicity for such functions.  
Our result is optimal with respect to the dimension of the space from which the quasi-mode is constructed.

 We also study equidistribution for sequences of joint quasimodes of the two partial Laplacians on compact irreducible quotients of $\mathbb{H}\times\mathbb{H}$. 
\end{abstract}

\maketitle

\section{Introduction}
The Quantum Unique Ergodicity (QUE) Conjecture of Rudnick-Sarnak \cite{RS} states that  eigenfunctions of the Laplacian on Riemannian manifolds of negative sectional curvature become 
equidistributed in the high-energy limit.  Although there exist so-called ``toy models" of quantum chaos that do not exhibit this behavior (see eg. \cites{FNDB, ANb, KelPert}), it has been suggested that large degeneracies of the quantum propagator may be responsible for some of
these phenomena
(see eg. \cite{SarnakProgress}).  Since the Laplacian on a surface of negative curvature is not expected to have large degeneracies, one can explore this aspect and introduce ``degeneracies" by considering quasimodes, or approximate eigenfunctions, in place of true eigenfunctions--- relaxing the order of approximation to true eigenfunctions yields larger spaces of quasimodes, mimicking higher-dimensional eigenspaces.  Studying the properties of such quasimodes--- and, especially, the effect on equidistribution of varying the order of approximation--- can help shed light on the overall role of spectral degeneracies in the theory.

One case of QUE that has been successfully resolved is the so-called Arithmetic Quantum Unique Ergodicity, where one considers congruence surfaces that carry additional number-theoretic structure, and admit Hecke operators that exploit these symmetries.  Since these operators commute with each other and with the Laplacian, it is natural to consider joint eigenfunctions of the Laplacian and Hecke operators.  The dimension of the joint eigenspace of the full Hecke algebra is well understood, and is one unless there is an obvious symmetry; in any case this dimension is bounded by a constant depending only on $\Gamma$, and hence degeneracies in the spectrum play no role is this case. For compact congruence surfaces, QUE for such joint eigenfunctions was proved by the second-named author \cite{Lin}, relying on earlier work with Bourgain \cite{BorLin}. In the non-compact case the results of \cite{Lin} are somewhat weaker, but this has been rectified by Soundararajan in \cite{Sound}; some higher-rank cases were studied in \cites{LinHxH, Lior-Akshay, Lior-Akshay2, AnanSilberman}.  
 
Here, we apply techniques developed in the context of our work \cite{meElon} on eigenfunctions of large graphs to the question of Quantum Unique Ergodicity for joint {\em quasimodes}.  We define an $\omega(r)$-{\bf quasimode with approximate parameter} $r$ to be a function $\psi$ satisfying
$$||(\Delta + (\frac{1}{4}+r^2))\psi||_2 \leq r \omega(r)||\psi||_2$$
The  factor of $r$ in our definition comes from the fact that $r$ is essentially the square-root of the Laplace eigenvalue.  

We will be interested in $o(1)$-quasimodes, by which we mean $\tilde{\omega}(r)$-quasimodes for some fixed $\tilde{\omega}(r)$ tending to 0 as $r\to\infty$; we  allow this decay to be arbitrarily slow.  Denoting by $S_\omega(r)$ the space spanned by eigenfunctions of spectral parameter in $[r-\omega(r), r+\omega(r)]$, our $o(1)$-quasimodes should be thought of as essentially belonging to $S_\omega(r)$ for some $\omega(r)\to 0$.  In fact,

\begin{Lemma}\label{quasimodes}
Given a fixed function $\tilde{\omega}(r)\to 0$, and a sequence $\{\tilde{\psi}_j\}_{j=1}^\infty$ of $\tilde{\omega}(r_j)$-quasimodes with approximate eigenvalue $r_j\to\infty$, there exists a function $\omega(r)\searrow 0$ and a sequence $\{{\psi}_j \in S_\omega(r_j)\}_{j=1}^\infty$ such that
$$||\psi - \tilde{\psi}||_2 \to 0 \quad \text{ as } \quad j\to\infty$$
\end{Lemma}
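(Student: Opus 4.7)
The plan is to let $\psi_j$ be the spectral projection of $\tilde{\psi}_j$ onto $S_\omega(r_j)$ for a suitable $\omega$, and to show via Parseval that the tail outside the window is small. After normalizing $\|\tilde{\psi}_j\|_2 = 1$ (the statement is homogeneous), fix an orthonormal basis $\{\phi_n\}$ of $L^2$ consisting of Laplace eigenfunctions, with spectral parameters $s_n$ given by $-\Delta\phi_n = (\frac{1}{4}+s_n^2)\phi_n$ — so $s_n \in \R_{\geq 0}$, with only finitely many exceptional values in $i(0,\tfrac{1}{2}]$ by compactness. Writing $\tilde{\psi}_j = \sum_n c_n^{(j)}\phi_n$, the operator $\Delta + \frac{1}{4}+r_j^2$ acts on $\phi_n$ as multiplication by $r_j^2 - s_n^2$, so the quasimode hypothesis translates into the spectral bound
$$\sum_n |c_n^{(j)}|^2 \,(r_j^2 - s_n^2)^2 \;\leq\; r_j^2\,\tilde{\omega}(r_j)^2.$$

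Next I would set $\psi_j := \sum_{|s_n - r_j|\leq \omega(r_j)} c_n^{(j)}\phi_n$, so that by Parseval
$$\|\tilde{\psi}_j-\psi_j\|_2^2 \;=\; \sum_{|s_n-r_j|>\omega(r_j)} |c_n^{(j)}|^2.$$
A short case analysis on the indices contributing to this sum — separately treating $s_n > r_j + \omega(r_j)$, $\,0 \leq s_n < r_j - \omega(r_j)$, and the finitely many exceptional parameters $s_n \in i(0,\tfrac{1}{2}]$ — yields $|r_j^2-s_n^2| \geq r_j\,\omega(r_j)$ for all such $n$, provided $r_j$ is large enough (the weakest bound comes from the case $s_n$ near $0$, where one only gets $r_j + s_n \geq r_j$, but this is still enough). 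Combining with the spectral bound gives the key estimate
$$\|\tilde{\psi}_j-\psi_j\|_2 \;\leq\; \frac{\tilde{\omega}(r_j)}{\omega(r_j)}.$$

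It then remains to choose $\omega$. I would take $\omega(r) := \sup_{s\geq r}\sqrt{\tilde{\omega}(s)}$, which is non-increasing by construction, tends to $0$ as $r\to\infty$ (since $\tilde{\omega}\to 0$ forces the sup to shrink), and dominates $\sqrt{\tilde{\omega}(r)}$ at every $r$. Hence $\tilde{\omega}(r_j)/\omega(r_j) \leq \sqrt{\tilde{\omega}(r_j)} \to 0$, as required. There is no real obstacle here; the only mildly delicate point is the requirement that $\omega$ be monotone (as in the statement) while still dominating $\sqrt{\tilde{\omega}}$, which is exactly what the sup construction provides. The entire argument is a routine spectral-projection estimate enabled by the compact-surface spectral theorem.
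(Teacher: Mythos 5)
Your proposal is correct and follows essentially the same route as the paper: project $\tilde\psi_j$ onto the spectral window $S_\omega(r_j)$, bound the squared $L^2$ mass of the complement via Parseval and the quasimode hypothesis (using that $|r_j^2 - s_n^2| \gtrsim r_j\,\omega(r_j)$ for parameters outside the window), and choose $\omega$ so that $\tilde\omega(r) = o(\omega(r))$. The one cosmetic difference is that you give an explicit admissible choice $\omega(r) = \sup_{s\ge r}\sqrt{\tilde\omega(s)}$ where the paper merely asserts a sufficiently slowly decaying $\omega$ exists; that is a minor gain in explicitness, not a different argument.
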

This means that the microlocal lifts of $\psi$ and $\tilde{\psi}$ (see section~\ref{ml lifts}) have the same weak-* limit points.  Therefore, for our purposes, we can assume without loss of generality that our $o(1)$-quasimodes actually belong to $S_\omega(r_j)$ for some $\omega(r)\searrow 0$.

{\em Proof:}
Consider $\tilde\psi_j^\perp = \tilde\psi_j - \Pi_{S_\omega(r_j)}\tilde\psi_j$, the projection of $\tilde\psi$ to the orthogonal complement of $S_\omega(r_j)$, and decompose $\tilde\psi_j^\perp = \sum_i c_j(\phi_i)\phi_i$ into an orthonormal basis of eigenfunctions.  Since each  component $\phi_i$ of $\tilde\psi_j^\perp$ has spectral parameter outside the interval $[r_j-\omega(r_j), r_j+\omega(r_j)]$, we have
$$||(\Delta + (\frac{1}{4}+r_j^2))\phi_i||_2 > 2\omega(r_j) r_j - \omega(r_j)^2$$
and therefore
\begin{eqnarray*}
||(\Delta + (\frac{1}{4}+r_j^2))\tilde{\psi}_j||_2 & \geq & ||(\Delta + (\frac{1}{4}+r_j^2))\tilde{\psi}_j^\perp||_2\\
& \gtrsim & \sqrt{\sum_i|c_j(\phi_i)|^2}\omega(r_j)r_j
\end{eqnarray*}
Choosing $\omega(r)$ to decay sufficiently slowly that $\tilde{\omega}(r) = o(\omega(r))$, this contradicts the $\tilde{\omega}(r)$-quasimode hypothesis unless $\sum_i |c_j(\phi_i)|^2 \to 0$ (and hence $||\tilde\psi_j^\perp||_2\to0$) as $j\to\infty$.  $\Box$

One expects that the space $S_\omega(r)$ should have dimension proportional to $r\omega(r)$; in fact, estimating the error term in Weyl's Law shows that this holds for windows of size $\omega(r)\geq C/\log{r}$, where $C$ is a constant depending on the manifold \cite[\S4]{SarnakHyperbolic}.  The dimension of $S_\omega(r)$ is not known for small windows $\omega(r)$--- indeed, this is the very difficult problem of bounding spectral multiplicities--- but we will content ourselves to deal with $o(1)$-quasimodes, for which it is known that the spaces are of dimension $o(r)$.  Such quasimodes can be extremely crude, and without further assumptions one cannot expect meaningful statements.  However, the situation is radically different when there is additional structure present that one can exploit.

Our first result concerns certain compact hyperbolic surfaces $\Gamma\backslash\mathbb{H}$ of arithmetic congruence type.
One can consider more general $\Gamma$, but for concreteness and simplicity we restrict to the following situation.
Let $H$ be a quaternion division algebra over $\mathbb{Q}$, split over $\mathbb{R}$, and $R$ a maximal order in $H$.  Fix an isomorphism $ \iota : H(\mathbb{R}) \overset\sim\to \operatorname{Mat}_2(\mathbb{R})$.
For $\alpha\in R$ of positive norm $n(\alpha)$, we write $\underline{\alpha}$ for the corresponding element of $\PSL_2(\mathbb{R})$.  Set $\Gamma$ to be the image in $\PSL_2(\mathbb{R})$ of the subgroup of norm $1$ elements of $R$.  As is well known, $\Gamma$ is discrete and co-compact in $\PSL_2(\mathbb{R})$, and the quotient $X=\Gamma\backslash \PSL_2(\mathbb{R})$ can be identified with the unit cotangent bundle of a compact hyperbolic surface $M=\Gamma\backslash \mathbb{H}$.

Write $R(m)$ for the set of elements of $R$ of norm $m$, and define the Hecke operator
$$T_m: f(x) \mapsto \frac{1}{\sqrt{m}}\sum_{\alpha\in R(1)\backslash R(m)} f(\underline{\alpha}x)$$
as the operator averaging over the Hecke points
$$T_m(x) = \{\underline{\alpha}x : \alpha\in R(1)\backslash R(m)\};$$
note that these formulas make sense both for $x\in \Gamma\backslash \H$ and $x\in \Gamma\backslash \PSL_2(\R)$.

Our methods are reliant only on the Laplacian and a single Hecke operator, and so we will be interested in the case where $m=p^{k}$ are powers of a fixed prime $p$.  It is well known that $T_{p^k}$ is a polynomial in $T_p$; so in particular, eigenfunctions of $T_p$ are eigenfunctions of all $T_{p^k}$.  It is known that for all but finitely many primes, the points $T_{p^k}(x)$ form a $p+1$-regular tree as $k$ runs from $0$ to $\infty$; we will always assume that $p$ is such a prime.  We denote by $S_{p^k}$ the sphere of radius $k$ in this tree, given by Hecke points corresponding to the primitive elements of $R$ of norm $p^k$.

Since each $\underline{\alpha}$ acts by isometries on $\Gamma\backslash\mathbb{H}$, the operator $T_p$ commutes with $\Delta$.  The spectrum of $T_p$ on $L^2(\Gamma\backslash\mathbb{H})$ lies in $[-\frac{p+1}{\sqrt{p}}, \frac{p+1}{\sqrt{p}}]$, and since $T_p$ commutes with $\Delta$, there is an orthonormal basis of $L^2(\Gamma\backslash\mathbb{H})$ consisting of joint eigenfunctions for $T_p$ and $\Delta$.  An $\omega$-quasimode for $T_p$ of approximate eigenvalue $\lambda$ is a function $\psi$ satisfying $||(T_p - \lambda)\psi||_2 \leq \omega||\psi||_2$; we call $\{\psi_j\}_{j=1}^\infty$ a {\bf sequence of $o(1)$-quasimodes for $T_p$} if each $\psi_j$ is an $\omega_j$-quasimode for $T_p$, and $\omega_j\to 0$ as $j\to\infty$.  As above in Lemma~\ref{quasimodes}, for our purposes we may as well assume that each $\omega_j$-quasimode is a linear combination of eigenfunctions whose eigenvalues lie in intervals of the form $[\lambda_j-\omega_j, \lambda_j+\omega_j]$, for some sequence $\omega_j\to 0$.  

For any sequence $\phi_j$ of $o(1)$-quasimodes  of the Laplacian $\Delta$ on $M$, normalized by $||\phi_j||_2=1$, there exists a measure $\mu_j$ on $S^*M$ (which we view as a probability measure on $\Gamma \backslash \PSL_2(\mathbb{R})$), called the {\bf microlocal lift} of $\phi_j$.  This measure is asymptotically invariant under the geodesic flow as the approximate eigenvalue of $\phi_j$ tends to infinity, and its projection to $M$ is asymptotic to $|\phi_j|^2d\area$.  Since the space of probability measures on $M$ is weak-* compact, we consider weak-* limit points of these microlocal lifts, called {\bf semiclassical measures} or {\bf quantum limits}.  The QUE problem asks whether such a limit point must be the uniform measure on $S^*M$.
We shall use a version of the construction of these lifts due to Wolpert \cite{Wolpert} (this version is the one used in \cite{LinHxH,Lin}), where $\mu_j=|\Phi_j|^2d\vol$ for suitably chosen $\Phi_j \in L^2(S^*M)$; see Section~\ref{ml lifts}.
The construction satisfies that $\Phi_j$ is an eigenfunction of $T_p$ when $\phi_j$ is, of the same eigenvalue; more generally, applying this to each spectral component, $\Phi_j$ is an $\omega_j$-quasimode for $T_p$ whenever $\phi_j$ is, with the same approximate eigenvalue.
Since $\Delta$ commutes with $T_p$, we may consider sequences $\{\phi_j\}$ of joint $o(1)$-quasimodes, whereby the corresponding $\{\Phi_j\}$ are still $o(1)$-quasimodes of $T_p$ as well.

We recall the following key definition from \cite{Lin}:

\begin{Definition} A measure $\mu$ on $S^*M=\Gamma \backslash \PSL_2 (\R)$ is said to be \textbf{$(\mathbf) {T_p}$-recurrent} if for any subset $A \subset S^*M$ of positive $\mu$-measure, for $\mu$-almost every $x \in A$ there is a sequence $k_i \to \infty$ for which $S _ {p^{k_i}} (x) \cap A$ is nonempty.
\end{Definition}

As we show in Lemma~\ref{Hecke recurrence by propagation lemma}, any weak-* limit point of the $\mu_j$ is $T_p$-recurrent.  The main innovation in this paper is the following result regarding these limit measures:

\begin{Theorem}\label{main}
Let $p$ be a prime (outside the finite set of bad primes for $M$), and let $\{\phi_j\}_{j=1}^\infty$ be a sequence of $L^2$-normalized joint $o(1)$-quasimodes of $\Delta$ and $T_p$ on $M$.  Then any weak-* limit point $\mu$ of the microlocal lifts $\mu_j$ has positive entropy on almost every ergodic component.
\end{Theorem}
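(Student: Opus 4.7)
The plan is to obtain a uniform exponential non-concentration estimate on Bowen dynamical balls,
\begin{equation*}
\mu\bigl(B^{\mathrm{geo}}_{T,\epsilon}(x)\bigr) \;\leq\; C_\epsilon\, e^{-cT} \qquad \forall\, x\in \Gamma\backslash\PSL_2(\mathbb{R}),
\end{equation*}
which, via the Brin--Katok entropy formula applied to the ergodic decomposition of $\mu$, forces a positive lower bound on the metric entropy of almost every ergodic component of $\mu$ under the geodesic flow. By Lemma~\ref{quasimodes} one may assume $\phi_j\in S_{\omega_j}(r_j)$ with $\omega_j\to 0$. The Wolpert construction recalled in Section~\ref{ml lifts} produces $\Phi_j\in L^2(\Gamma\backslash\PSL_2(\mathbb{R}))$ with $\mu_j = |\Phi_j|^2\,d\vol$; inspection of its action on each spectral component shows that $\Phi_j$ is itself an $o(1)$-quasimode of $T_p$ with approximate eigenvalue $\lambda_j\in [-\tfrac{p+1}{\sqrt p},\tfrac{p+1}{\sqrt p}]$. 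Lemma~\ref{Hecke recurrence by propagation lemma} supplies the corresponding $T_p$-recurrence of any limit $\mu$, so the setup is exactly that of \cite{BorLin,Lin} but now at the level of quasimodes rather than eigenfunctions.

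The mechanism is Hecke amplification. The recursion $T_pT_{p^k}=T_{p^{k+1}}+T_{p^{k-1}}$ identifies $T_{p^k}$ with the Chebyshev polynomial $U_k(T_p/2)$, so any polynomial $P$ of degree $N$ in $T_p$ expands as $P(T_p)=\sum_{k\leq N}a_k T_{p^k}$ --- a weighted average over the Hecke-tree spheres $S_{p^k}$. For each fixed $N$ and $\ell^1$-bounded coefficients, the quasimode hypothesis yields $\|P(T_p)\Phi_j-P(\lambda_j)\Phi_j\|_2=o_j(1)$. Taking $P$ to be a truncation of the spherical function on the $(p+1)$-regular tree at $\lambda_j$ (with the untempered spectrum handled as in \cite{BorLin}), one obtains $|P(\lambda_j)|^2\gtrsim N$; for a nonnegative test function $f$ supported in a Bowen tube $B^{\mathrm{geo}}_{T,\epsilon}(x_0)$ the amplification identity
\begin{equation*}
|P(\lambda_j)|^2\int f\,|\Phi_j|^2\,d\vol \;=\; \int f\,|P(T_p)\Phi_j|^2\,d\vol \,+\, o_j(1)
\end{equation*}
reduces the required non-concentration bound on $\int f\,|\Phi_j|^2$ to a bilinear estimate for the right-hand side, expanded as a sum over pairs $(\alpha,\beta)\in S_{p^k}\times S_{p^\ell}$ weighted by $a_k\overline{a_\ell}$.

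By Cauchy--Schwarz, only pairs for which $\underline{\beta}^{-1}\underline{\alpha}(\supp f)$ meets a thickening of $\supp f$ contribute. A geometric counting lemma --- the analogue of the estimates in \cite{BorLin} in the eigenfunction case and of the graph propagation lemmas in \cite{meElon} --- must then show that only $\mathrm{poly}(N)$ of the $\sim p^{2N}$ candidate pairs contribute once $\epsilon<e^{-CT}$, by combining discreteness of $\Gamma$, the Diophantine structure of norms in the quaternion order $R$, and the transversality of the Hecke-tree direction on $\Gamma\backslash\PSL_2(\mathbb{R})$ to the geodesic flow. Balancing $N\asymp T$ so that the amplification factor $|P(\lambda_j)|^2\asymp N$ dominates the residual count by a definite exponential margin yields the claimed estimate. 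The principal obstacle is precisely this counting lemma, which must hold uniformly in every Bowen tube center and time; the sharpness claim of the introduction is encoded here, since the affordable amplification degree $N$ is calibrated exactly to the $o(r)$-dimension of the quasimode space, leaving no slack in the counting step.
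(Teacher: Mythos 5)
Your proposal takes a genuinely different route, and I believe it contains a gap that is not fixable without adopting the paper's key structural idea. You aim to prove a \emph{uniform} non-concentration estimate $\mu\bigl(B^{\mathrm{geo}}_{T,\epsilon}(x)\bigr)\leq C_\epsilon e^{-cT}$ and then invoke Brin--Katok. This would give a \emph{uniform} lower bound on the entropy of every ergodic component, which is strictly stronger than what Theorem~\ref{main} asserts; indeed the introduction explicitly concedes that the entropy bounds obtained by these methods are weaker (in the uniform sense) than those of Anantharaman--Nonnenmacher. The paper instead reformulates ``positive entropy on a.e.\ ergodic component'' as: for every $\eta>0$ there is $\delta(\eta)>0$ so that any union $\mathcal{E}$ of refined partition elements with $\mu(\mathcal{E})>\eta$ must consist of at least $p^{\delta(\eta) N}$ elements. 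The quantity studied is $\langle K_N(\Phi_j 1_{\mathcal{E}}),\Phi_j 1_{\mathcal{E}}\rangle$, with the amplifier $K_N$ from Lemma~\ref{operator} tuned to the threshold $\eta$: its spectral value near $\theta_0$ is $>\eta^{-1}$ at the cost of a pointwise kernel bound only $p^{-N\eta^2/512}$. The crucial lower bound $\gtrsim\eta^2$ comes from the observation that since $\Phi_j\in S_j$ and $\|\Phi_j 1_{\mathcal{E}}\|_2^2>\eta$, the projection of $\Phi_j 1_{\mathcal{E}}$ onto $S_j$ has $L^2$-norm at least $\|\Phi_j 1_{\mathcal{E}}\|_2^2$, so at least an $\eta$-fraction of the truncated function sits where $K_N$ is large. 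If instead $\|\Phi_j 1_B\|^2=m\ll\eta$ (as for a single Bowen ball when $\mu(B)$ is small), this lower bound becomes negative and gives nothing; matching $\eta\sim m$ and solving $m^2\lesssim Np^{-Nm^2/512}$ yields only $m\lesssim\sqrt{\log N/N}$, far from exponential decay. So your plan cannot produce the bound you state.

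There is a second, independent flaw: your bilinear estimate for $\int f\,|P(T_p)\Phi_j|^2$ expanded over pairs $(\alpha,\beta)$ does not have the geometric constraint you claim. Writing $\int f(x)\,\Phi_j(\underline\alpha x)\overline{\Phi_j(\underline\beta x)}\,dx$ and changing variables, the only support constraint is $x\in\supp f$; since $\Phi_j$ is a global function, nothing forces $\underline\beta^{-1}\underline\alpha(\supp f)$ to meet a thickening of $\supp f$, so a Cauchy--Schwarz argument must sum over \emph{all} $\sim p^{2N}$ pairs, and the counting Lemma~\ref{disjoint} cannot be applied. To recover the constraint you would need to truncate $\Phi_j$ to a neighborhood of $\supp f$ \emph{before} applying $P(T_p)$, but then $\Phi_j 1_B$ is no longer an approximate $T_p$-eigenfunction and the amplification identity $\int f|P(T_p)\Phi_j|^2\approx |P(\lambda_j)|^2\int f|\Phi_j|^2$ fails. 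The paper resolves this tension precisely by cutting \emph{both} arguments of the inner product $\langle K_N(\Phi_j 1_{\mathcal{E}}),\Phi_j 1_{\mathcal{E}}\rangle$ — so that Lemma~\ref{disjoint} applies to the upper bound — and by proving the lower bound via the full spectral decomposition of $\Phi_j 1_{\mathcal{E}}$ into $T_p$-eigenfunctions of $L^2(S^*M)$ rather than via a polynomial identity applied to $\Phi_j$. You have the right ingredients (Chebyshev/Hecke amplification, Diophantine rigidity from the quaternion order, tube counting), but the architecture of the argument is essentially different from — and in your version incompatible with — the one that actually closes.
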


In view of the measure classification results of \cite{Lin}, this theorem,  together with the geodesic-flow invariance of quantum limits proved in Lemma~\ref{geo flow invariance}, and $T_p$-recurrence which we establish in Lemma~\ref{Hecke recurrence by propagation lemma} imply the following:
\begin{Corollary}\label{QUE}
Let $\{\phi_j\}$ as above be a sequence of joint $o(1)$-quasimodes of $\Delta$ and $T_p$.   Then the sequence $\mu_j$ converges weak-* to Liouville measure on $S^*M$.
\end{Corollary}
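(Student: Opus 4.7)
The plan is to show that the \emph{only} weak-$*$ limit point of the sequence $\{\mu_j\}$ of microlocal lifts is the normalized Liouville measure on $S^*M$; combined with weak-$*$ compactness of probability measures on the compact space $S^*M$, this will force the full sequence $\mu_j$ to converge weak-$*$ to Liouville, as claimed.

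First I would fix an arbitrary weak-$*$ limit point $\mu$, extracted along some subsequence $\mu_{j_k}\to\mu$. Since $M$ (hence $S^*M$) is compact and each $\mu_j$ is a probability measure, no mass escapes to infinity and $\mu$ is itself a probability measure. I would then verify in turn the three structural properties required to invoke the measure classification theorem of \cite{Lin}: namely, that $\mu$ is (a) invariant under the geodesic flow on $\Gamma\backslash\PSL_2(\R)$, (b) $T_p$-recurrent in the sense of the definition above, and (c) of positive entropy on almost every ergodic component with respect to the geodesic flow. Property (a) is supplied by Lemma~\ref{geo flow invariance}, property (b) by Lemma~\ref{Hecke recurrence by propagation lemma}, and property (c) is precisely the content of the main Theorem~\ref{main}.

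With these three properties in hand, I would apply the measure rigidity theorem of \cite{Lin}: on the compact arithmetic surface $M=\Gamma\backslash\mathbb{H}$ considered here, any geodesic-flow-invariant, $T_p$-recurrent probability measure on $S^*M$ whose ergodic components carry positive entropy almost surely must equal the normalized Liouville measure. Therefore every weak-$*$ limit point $\mu$ equals Liouville. Since the space of probability measures on the compact space $S^*M$ is itself weak-$*$ compact and metrizable, a sequence that has only one possible limit point must converge to that limit point; hence $\mu_j\to \mathrm{Liouville}$ weak-$*$, completing the deduction.

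There is not really a serious obstacle at this stage: all the genuine analytic input has been packaged into Theorem~\ref{main} (the positive entropy statement), into the microlocal lift construction of Wolpert used to guarantee that the $\Phi_j$ are again joint $o(1)$-quasimodes, and into the Lemmas asserting geodesic-flow invariance and $T_p$-recurrence of the limit. The only point requiring a little care is to match the hypotheses of the classification theorem of \cite{Lin} with the output of Theorem~\ref{main}, in particular ensuring that ``positive entropy on almost every ergodic component'' is the version of the entropy assumption used there; this is straightforward because ergodic decomposition of a geodesic-flow-invariant measure commutes with the entropy functional in the usual way.
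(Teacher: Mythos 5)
Your argument is correct and is essentially the paper's own deduction: the Corollary is obtained exactly by combining Theorem~\ref{main} (positive entropy on a.e.\ ergodic component), Lemma~\ref{geo flow invariance} (geodesic-flow invariance), and Lemma~\ref{Hecke recurrence by propagation lemma} ($T_p$-recurrence) with the measure classification of \cite{Lin}, compactness of $S^*M$ ruling out escape of mass. Your closing remark about uniqueness of the limit point forcing convergence of the full sequence is the standard step and matches the intended reading.
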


Here, we have reduced the assumptions to a bare minimum:  we assume only that our functions are joint $o(1)$-quasimodes for the Laplacian and {\em one} Hecke operator.  In particular, this shows that the full Hecke algebra is not needed to establish Arithmetic QUE, as one Hecke operator will suffice.  Note that even without any Hecke operators, Anantharaman \cite{Anan} has shown--- for general negatively curved compact manifolds--- that a quantum limit corresponding to Laplacian eigenfunctions 
has positive entropy, and these techniques extend to $O(\frac{1}{\log{r}})$-quasimodes).
Anantharaman's result has been further sharpened in her joint work with Nonnenmacher \cite{AN} (see also \cite{AKN}). With respect to eigenfunctions, while the {\em overall} entropy bounds obtained by our methods are weaker than those of \cite{AN}, Theorem~\ref{main} is stronger in that it gives information on {\em almost all ergodic components}. In particular, the results of \cites{Anan,AN,AKN} do not rule out that a positive proportion of the mass of a quantum limit is supported on a single periodic trajectory of the geodesic flow.

Our results go further by allowing $o(1)$-quasimodes--- which may be taken from rather large subspaces whose dimension is only bounded by $o(r)$.  In the paper \cite{localized_example} the first named author shows that these subspaces are ``optimally degenerate" for QUE, in the sense that given any sequence of $c r_j$-dimensional subspaces $\tilde{S}_c(r_j)$ of the spaces $S_C(r_j)$, there exists a sequence $\{\psi_j\in \tilde{S}_c(r_j)\}_{j=1}^\infty$ such that the corresponding microlocal lifts do not converge to Liouville measure on $S^*M$--- indeed, any weak-* limit point of these microlocal lifts must concentrate a positive proportion of its mass on a codimension $1$ subset!  Thus, our joint $o(1)$-quasimodes form spaces of largest-possible dimension that can satisfy QUE.

It should be remarked that these subspaces are considerably larger than the spaces of Laplace-quasimodes that are expected to satisfy QUE without any Hecke assumption.  This is a testament to the rigidity imposed by the additional structure of the Hecke correspondence, as already apparent in \cite{Lin}.

It is worth remarking here that the phrase ``joint $o(1)$-quasimodes" can be misleading:  the two orders of approximation are not necessarily the same, and serve completely separate purposes; the approximation to the Laplace eigenvalue is used to guarantee asymptotic invariance under the geodesic flow, while the approximation to the Hecke eigenvalue is used to establish positive entropy on almost every ergodic component and $T_p$-recurrence.

Our methods also apply to the case of $M=\Gamma\backslash \mathbb{H}\times\mathbb{H}$, with $\Gamma$ a co-compact, irreducible lattice in $\PSL(2,\mathbb{R})\times \PSL(2,\mathbb{R})$.
Here we do not assume any Hecke structure\footnote {By Margulis' Arithmeticity Theorem such lattices are necessarily arithmetic, though it is not known if they are necessarily of congruence type (which is necessary for the existence of Hecke operators with good properties).}; instead, we take the sequence $\{\phi_j\}$ to consist of joint $o(1)$-quasimodes of the two partial Laplacians, each on the respective copy of $\mathbb{H}$.  The Laplacian on $M$ is the sum of the two partial Laplacians, and so the large eigenvalue limit for the Laplacian entails at least one of the two partial eigenvalues going to infinity (after passing to a subsequence, if necessary).  At present, we have been able to apply our methods only to the case where one partial eigenvalue (say the second) remains  bounded, and the other tends to $\infty$.

By the arguments of \cite{LinHxH} and Section~\ref{HxH lift}, this means that the microlocal lift to $\Gamma\backslash \PSL(2,\mathbb{R})\times\mathbb{H}$ becomes invariant under the action of the diagonal subgroup $A$ of $\PSL(2,\mathbb{R})$ acting on the first coordinate.  By applying the methods of Theorem~\ref{main} to use the foliation given by varying the second coordinate in an analogous way to use of the Hecke Correspondence $T_p$ for congruence surfaces, we are able to prove that any quantum limit of such a sequence must also carry positive entropy on a.e. ergodic component (with respect to the $A$-action on the first coordinate).  Thus, \cite[Thm.\ 1.1]{Lin} in conjunction with the relevant recurrence property proved in Lemma~\ref{hyperbolic recurrence by propagation lemma} again may be used to show equidistribution of $|\phi_j|^2d\vol$ as well as their microlocal lifts:
\begin{Theorem}\label{QUE on HxH}
Let $\{ \phi_j \}$ be a sequence of joint $o(1)$-quasimodes of $\Delta_1$ and $\Delta _ 2$ on $\Gamma \backslash \H \times \H$ with approximate spectral parameters $r^1_j,r^2_j$ and where $\Gamma$ is an irreducible cocompact lattice in $\PSL (2, \R) \times \PSL (2, \R)$. Assume $r^1_j \to \infty$ and $r^2_j$ bounded.  Then the sequence of lifts $\mu_j$ of $\absolute {\phi _ j}^2 d \vol$ to $\Gamma \backslash \PSL (2, \R) \times \H$ converges weak-* to the uniform measure on $\Gamma \backslash \PSL (2, \R) \times \H$.
\end{Theorem}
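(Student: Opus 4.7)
The plan is to mirror the proof scheme of Corollary~\ref{QUE}, with the role of the Hecke correspondence $T_p$ replaced by translation in the second $\mathbb{H}$-factor. First I would invoke the microlocal lift construction of Section~\ref{HxH lift} (as in \cite{LinHxH}) to associate to each $\phi_j$ a measure $\mu_j$ on $\Gamma\backslash\PSL(2,\R)\times\mathbb{H}$ whose projection to $M$ is asymptotic to $|\phi_j|^2 d\vol$. Passing to a subsequence, let $\mu$ be a weak-* limit point of the $\mu_j$. Because $r^1_j\to\infty$ and $\phi_j$ is an $o(1)$-quasimode of $\Delta_1$, the standard Egorov-type argument (after the spectral reduction afforded by Lemma~\ref{quasimodes}) applied to the first copy of $\PSL(2,\R)$ shows that $\mu$ is invariant under the diagonal subgroup $A\subset\PSL(2,\R)$ acting on the first coordinate. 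Lemma~\ref{hyperbolic recurrence by propagation lemma} furthermore provides the appropriate recurrence property of $\mu$, playing the role in this setting that $T_p$-recurrence plays for Theorem~\ref{main}.

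The main step, and the direct analog of Theorem~\ref{main} in this context, is to show that $\mu$ has positive entropy on almost every $A$-ergodic component. The key input is that $r^2_j$ remains bounded, so $\phi_j$ is essentially an almost-eigenfunction of $\Delta_2$ of bounded spectral parameter; in the second-coordinate $\mathbb{H}$ direction this yields uniform-in-$j$ control on how $\phi_j$ varies, and that control plays the role that propagation along the $(p+1)$-regular Hecke tree played for Theorem~\ref{main}. Concretely, translation by a fixed element $g\in\PSL(2,\R)$ acting on the second coordinate moves $\phi_j$ only by a controlled perturbation, so one may transcribe the argument of Theorem~\ref{main}, which produces many essentially independent ``copies'' of the mass of $|\phi_j|^2$ transverse to the $A$-orbits and then extracts an entropy lower bound via a covering/partition estimate, into the present continuous setting. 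I expect this to be the main obstacle: one must verify that the entropy estimate, originally obtained from discrete tree propagation, goes through for the continuous foliation coming from the second $\mathbb{H}$-factor with constants independent of $j$, so that a strictly positive entropy bound survives the weak-* limit. A delicate point is that the lift lives on $\Gamma\backslash\PSL(2,\R)\times\mathbb{H}$ rather than on the full unit cotangent bundle of $M$, so the propagation along the second factor must be carefully aligned with the $A$-leaves in the first factor.

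Once positive entropy is in hand, I would invoke \cite[Thm.~1.1]{Lin}: any $A$-invariant, $A$-recurrent measure on $\Gamma\backslash\PSL(2,\R)\times\mathbb{H}$ with positive entropy on almost every ergodic component is necessarily the uniform measure. Since every weak-* limit point of $\{\mu_j\}$ is forced to equal this uniform measure, compactness of the space of probability measures upgrades subsequential convergence to convergence of the full sequence, yielding the statement of the theorem.
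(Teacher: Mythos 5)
Your high-level strategy matches the paper's: lift to $\Gamma\backslash\PSL(2,\R)\times\H$ as in Section~\ref{HxH lift}, obtain $A$-invariance in the first factor, prove recurrence in the second factor (Lemma~\ref{hyperbolic recurrence by propagation lemma}), establish positive entropy on a.e.\ $A$-ergodic component, and conclude via the measure classification \cite[Thm.~1.1]{Lin}. That much is correct. However, there are two substantive gaps in the way you describe the positive-entropy step.

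First, the mechanism replacing the Hecke-tree propagation is \emph{not} that a single translation in the second coordinate moves $\phi_j$ by a controlled perturbation; indeed translating $z$ by a bounded element can change $\phi_j(\cdot,z)$ completely. What the paper actually uses (Lemmas~\ref{H kernel} and~\ref{HxH operator}) is a \emph{radial convolution kernel} $k_T$ in the second $\H$-factor, built via the Selberg/Harish-Chandra transform from a carefully chosen test function $h_T(r)=\cos(rT)/\cosh(\pi r/2)$: it is supported in a ball of hyperbolic radius $\lesssim T$, has $\|k_T\|_\infty\lesssim e^{-T/2}$ (the continuous analogue of the Chebyshev decay $p^{-n/2}$ of Lemma~\ref{propagation}), and its Fej\'er-kernel combination has spherical transform bounded below and $\gtrsim \eta^{-1}$ near $r_j^2$. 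The boundedness of $r_j^2$ enters not as ``controlled variation'' of $\phi_j$, but to make the lower bound on the spherical transform near $r_j^2$ uniform in $j$, and to permit the Dirichlet-approximation step that chooses $T$. Without this kernel and its quantitative properties, the analogue of the inner-product decomposition~\eqref{pointwise}--\eqref{spectral} in the proof of Theorem~\ref{main} does not transcribe.

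Second, you omit the disjointness input. The entropy lower bound requires that when one lifts the propagated mass back to a fundamental domain, only $O(N)$ lattice elements can bring a small tube $xB(e^{-\kappa N},\tau)\times\{z\}$ into another such tube at second-coordinate distance $\leq N$. In the Hecke setting this is Lemma~\ref{disjoint}, using the quaternion algebra. In the $\H\times\H$ setting it is Lemma~\ref{HxH disjoint}, whose proof uses the quantitative irreducibility of $\Gamma$ --- namely the Liouville-type inequality~\eqref{property of irreducible lattices}, itself a consequence of Margulis' Arithmeticity Theorem --- together with Proposition~\ref{abelian subgroups are small} controlling abelian subgroups of $\Gamma$. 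This is a genuinely arithmetic ingredient, not a soft covering estimate, and the positive-entropy argument fails without it.
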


\noindent The argument is analogous to the one presented here for the rank-one arithmetic case in Theorem~\ref{main}, and we present the necessary modifications of the argument in section~\ref{HxH}.

The results of \cite{BorLin}, along with an appropriate construction of microlocal lifts,  were generalized by Silberman and Venkatesh \cite{Lior-Akshay, Lior-Akshay2} who, using the measure classification results of \cite{EKL}, were able to extend the QUE results of \cite{Lin} to quotients of more general  symmetric spaces (they also had to develop an appropriate microlocal lift). It is likely possible to extend the techniques of this paper to their context.  We also hope that our work may be extended to the case of $\Gamma\backslash \mathbb{H}\times\mathbb{H}$ where both partial eigenvalues are growing to $\infty$.

Regarding finite volume arithmetic surfaces such as $SL(2,\mathbb{Z})\backslash\mathbb{H}$, it was shown in \cite{Lin} that any quantum limit
has to be a scalar multiple of the Liouville measure--- though not necessarily with the right scalar--- and similar results can be provided by  our techniques using a single Hecke operator. Recently, Soundararajan \cite{Sound} has given an elegant argument that settles this escape of mass problem for joint eigenfunctions of the full Hecke algebra and (in view of the results of \cite{Lin}) shows that the only quantum limit is the normalized Liouville measure. An interesting open question is whether our $p$-adic wave equation techniques can be used to rule out escape of mass using a single Hecke operator. We also mention that Holowinsky and Soundararajan \cite{Holowinsky-Sound} have recently developed an alternative approach to establishing Arithmetic Quantum Unique Ergodicity for joint eigenfunctions of all Hecke operators. This approach \emph{requires} a cusp, and is only applicable in cases where the Ramanujan Conjecture holds; this conjecture is open for the Hecke-Maass forms, but has been established by Deligne for holomorphic cusp forms --- a case which our approach does not handle.

\subsection*{Acknowledgments}

We thank Peter Sarnak for helpful discussions, and Shahar Mozes for helpful suggestions that simplified the proof of Lemma \ref{HxH disjoint}.

\section{Microlocal Lifts of Quasimodes}\label{ml lifts}

\subsection{Some Harmonic Analysis on $\PSL(2,\mathbb{R})$}
We begin by reviewing some harmonic analysis on $\PSL(2,\mathbb{R})$ that we will need.  
Throughout, we write $X=\Gamma\backslash \PSL(2,\mathbb{R})$ and $M=\Gamma\backslash\mathbb{H} = \Gamma\backslash \PSL(2,\mathbb{R})/K$, where $K=SO(2)$ is the maximal compact subgroup.

Fix an orthonormal basis $\{\phi_j\}$ of $L^2(M)$ consisting of Laplace eigenfunctions, which we can take to be real-valued for simplicity.  Each eigenfunction generates, under right translations, an irreducible representation $V_{j} = \overline{\{\phi_j(xg^{-1}) : g\in \PSL(2,\mathbb{R})\}}$ of $\PSL(2,\mathbb{R})$, which span a dense subspace of $L^2(X)$.  

We distinguish the pairwise orthogonal {\bf weight spaces} $A_{2n}$  in each representation, consisting of those functions satisfying $f(xk_\theta) = e^{ i 2n\theta}f(x)$ for all $k_\theta = \begin{pmatrix} \cos{\theta} & \sin{\theta}\\ -\sin{\theta} & \cos{\theta}\end{pmatrix} \in K$ and $x\in X$.  The weight spaces together span a dense subspace of $V_{j}$.  Each is one-dimensional in $V_{j}$, spanned by $\phi^{(j)}_{2n}$ where
\begin{eqnarray*}
\phi_0^{(j)} & = & \phi_j \in A_0\\
(ir_j + \frac{1}{2}+n)\phi_{2n+2}^{(j)} & = & E^+\phi_{2n}^{(j)}\\ 
(ir_j + \frac{1}{2} -n)\phi_{2n-2}^{(j)} & = & E^-\phi_{2n}^{(j)}
\end{eqnarray*}
Here $E^+$ and $E^-$ are the {\bf raising} and {\bf lowering operators}, first-order differential operators corresponding to $\begin{pmatrix} 1 & i\\ i & -1\end{pmatrix}\in \mathfrak{sl}(2,\mathbb{C})$ and $\begin{pmatrix} 1 & -i\\ -i & -1 \end{pmatrix}\in \mathfrak{sl}(2,\mathbb{C})$ in the complexified Lie algebra.  
The normalization is such that each $\phi_{2n}^{(j)}$ is a unit vector.
Continuing with the identification of the elements in $\sl (2, \C)$ with first order invariant differential operators on $X$, we also make use of the following operators
\begin{eqnarray*}
H & = & \begin{pmatrix} 1&0\\0 & -1\end{pmatrix}\\
W & = & \begin{pmatrix} 0 & -1\\ 1 & 0\end{pmatrix}\\
X^+ & = & \begin{pmatrix} 0 & 1\\ 0 & 0 \end{pmatrix}
\end{eqnarray*}
note that $H$ is the derivative in the geodesic-flow direction, $W$ is the derivative in the fibre $K$ direction, and $X^+$ is the derivative in the stable horocycle direction. We will also need the second order operator
\begin{equation*}
\Omega = {\tfrac{1}{2}} (E ^ + E ^ - + E ^ - E ^ +) + \frac {W ^2 }{ 4}
\end{equation*}
which commutes with all the invariant differential operators on $X$ and agrees with $\Delta$ on the space of $K$-invariant functions.

It is easy to check that the distribution
$$\Phi_\infty^{(j)} = \sum_{n=-\infty}^{\infty} \phi_{2n}^{(j)}$$
satisfies $H \Phi_\infty^{(j)} = (ir_j-\frac{1}{2})\Phi_\infty^{(j)}$ and $X^+\Phi_\infty^{(j)}= 0$.  We also record the identities
\begin{eqnarray}
H & = & \frac{1}{2}(E^+ + E^-)\nonumber\\
E^+\Phi_\infty^{(j)} & = & (ir_j -\frac{1}{2} - \frac{i}{2}W)\Phi_\infty^{(j)}\nonumber\\
E^-\Phi_\infty^{(j)} & = & (ir_j - \frac{1}{2}+\frac{i}{2}W)\Phi_\infty^{(j)}\nonumber\\
E^+E^-\Phi_\infty^{(j)} & = & -(r_j^2+\frac{1}{4})\Phi_\infty^{(j)} + (\frac{1}{4}W^2 -\frac{i}{2}W)\Phi_\infty^{(j)}\nonumber\\
E^+E^-\phi_0^{(j)} & = & -(r_j^2+\frac{1}{4})\phi_0^{(j)} =  \Delta \phi_0^{(j)}\label{identities}
\end{eqnarray}

\subsection{Construction of the Microlocal Lift}
For any $\omega(r)$-quasimode $\phi$,  with approximate Laplace eigenvalue $\frac{1}{4}+r^2$, we seek a smooth function $\Phi$ on $\Gamma\backslash \PSL(2,\mathbb{R})$ satisfying:\label{Greek Phi properties}
\begin{enumerate}[leftmargin=*,label=\textup{($\Phi$\arabic*)}]
\item{The measure $|\Phi|^2d\vol$ is asymptotic to the distribution defined by 
$$f\mapsto \langle Op(f)\phi, \phi\rangle_{L^2(M)} = \langle f\Phi_\infty, \phi\rangle_{L^2(X)}$$
In particular, the projection of the measure $|\Phi|^2d\vol$ on $X$ to $M$ is asymptotic to $|\phi|^2d\area$; i.e., for any smooth $\tilde{f}\in C^\infty(M)$, we have
$$\int_{X} \tilde{f} |\Phi|^2d\vol \sim \int_{M} \tilde{f} |\phi|^2 d\area$$
as $r\to\infty$.  It is in this sense that $|\Phi|^2d\vol$ ``lifts" the measure $|\phi|^2d\area$ to $X$.}
\item{The measure $|\Phi|^2d\vol$ is asymptotically invariant under the geodesic flow; that is, for any smooth $f \in C^\infty(X)$, we have
$$\int_{X} H\!f\, |\Phi|^2d\vol \to 0 \qquad \text{as $r \to \infty$} $$
}
\item {$\Phi$ will be a $T_p$-eigenfunction (or an $\omega$-quasimode for $T_p$) whenever $\phi$ is.
}

\end{enumerate}

\noindent For the remainder of the section, set
$$I_\phi(f) := \langle f \Phi_\infty, \phi\rangle = \lim_{N\to\infty} \left\langle f \sum_{n=-N}^N \phi_{2n}, \phi_0\right\rangle.$$
Note that this limit is purely formal for $K$-finite $f$, by orthogonality of the weight spaces, and since these $K$-finite functions are dense in the space of smooth functions, we can restrict our attention to these.  We denote by $A_{2n}$ the $n$-th weight space, consisting of smooth functions that transform via $f(xk_\theta) = e^{2i n\theta} f(x)$ for all $x\in X$.

Recall that $S_\omega(r)\subset C^\infty(M)$ is the space spanned by eigenfunctions with spectral parameter in $[r-\omega(r), r+\omega(r)]$.  
\begin{Lemma}\label{asymp to op}
Let $\phi\in S_\omega(r)$ be a unit vector, with $\omega(r)\leq 1$, and set 
$$\Phi := \frac{1}{\sqrt{2\lfloor r^{1/2}\rfloor+1}}\sum_{|n|\leq\sqrt{r}} \phi_{2n}$$  
Then for any $K$-finite $f\in \sum_{n=-N_0}^{N_0}A_{2n}$, we have
$$I_\phi(f) = \langle f\Phi, \Phi \rangle +O_{f}(r^{-1/2})$$
\end{Lemma}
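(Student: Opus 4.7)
The plan is to expand both sides along the $K$-weight ladders of the eigenfunctions in $S_\omega(r)$ and then use a Leibniz-style recursion involving $E^+$ to show that the weight-shifted matrix coefficients appearing in $\langle f\Phi,\Phi\rangle$ are nearly constant in the weight index.

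First, write $\phi = \sum_\alpha a_\alpha \phi_0^{(\alpha)}$ in the orthonormal basis of Laplace eigenfunctions with $r_\alpha\in [r-1,r+1]$, set $N := \lfloor r^{1/2}\rfloor$, and define the pooled vectors $V_n := \sum_\alpha a_\alpha \phi_{2n}^{(\alpha)}$; each $V_n$ is a unit vector, and $\Phi = (2N+1)^{-1/2}\sum_{|n|\leq N}V_n$. Decomposing $f$ into $K$-weights reduces matters to a single $f\in A_{2k}$, in which case weight-space orthogonality forces all cross terms to vanish and yields
\begin{align*}
I_\phi(f) &= \langle fV_{-k}, V_0\rangle,\\
\langle f\Phi,\Phi\rangle &= \frac{1}{2N+1}\sum_{\substack{|n|\leq N\\|n+k|\leq N}}\langle fV_n, V_{n+k}\rangle.
\end{align*}
The $O(|k|)$ boundary terms lost in the second sum each have modulus $\leq \|f\|_\infty$, so they contribute only $O_f(|k|/N) = O_f(r^{-1/2})$; it therefore suffices to show $\langle fV_n, V_{n+k}\rangle = \langle fV_{-k}, V_0\rangle + O_f(r^{-1/2})$ uniformly for $|n|\leq N$.

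For this, since $|r_\alpha - r|\leq \omega(r)\leq 1$ uniformly in $\alpha$, I would write
$$E^+V_n = (ir + \tfrac{1}{2} + n)V_{n+1} + \eta_{n+1},\qquad \|\eta_{n+1}\|_2 \leq 1,$$
and similarly $E^-V_{n+1+k} = (ir + \tfrac{1}{2} -n-1-k)V_{n+k} + \eta'_{n+k}$ with $\|\eta'_{n+k}\|_2\leq 1$. Applying Leibniz to $E^+(fV_n)$, pairing with $V_{n+1+k}$, and transferring $E^+$ to the other slot through its $L^2$-adjoint $-E^-$ produces a scalar recursion
$$(ir+\tfrac{1}{2}+n)\langle fV_{n+1}, V_{n+1+k}\rangle = (ir+\tfrac{1}{2}+n+k)\langle fV_n, V_{n+k}\rangle + R_n,$$
where $R_n$ collects the terms involving $E^+f$, $\eta_{n+1}$, and $\eta'_{n+k}$ and satisfies $|R_n|\leq O_f(1)$. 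Dividing by $(ir+\tfrac{1}{2}+n)$, which has modulus $\geq r/2$, and iterating from $n=-k$ produces a telescoping product of ratios $(ir+\tfrac12+l+k)/(ir+\tfrac12+l)$ with at most $2|k|$ uncancelled factors of modulus $1 + O_k(|n|/r)$, together with an additive tail of size $\leq (|n|+|k|)\cdot O_f(1/r)$; both are $O_f(r^{-1/2})$ for $|n|\leq N$, and averaging over $n$ finishes.

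The main technical point is keeping the recursion scalar in the $\alpha$-index: working with the pooled vectors $V_n$ instead of the individual components $\phi_{2n}^{(\alpha)}$ replaces the $\alpha$-dependent coefficients $(ir_\alpha + \tfrac{1}{2} + n)$ by the common scalar $(ir + \tfrac{1}{2} + n)$, at the controlled cost of error vectors $\eta, \eta'$ of norm $\leq \omega(r)\leq 1$. The choice $N\asymp r^{1/2}$ is what balances the iterated error $O(N/r)$ against the averaging gain $O(1/N)$, which is why this is precisely the window where the proof works.
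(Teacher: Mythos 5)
Your proof is correct and rests on the same core mechanism as the paper's: use the ladder relations for $E^\pm$ to show the weight-shifted matrix coefficients are nearly constant along the ladder, iterate $O(\sqrt r)$ times with per-step error $O_f(r^{-1})$, and then average over the $\asymp \sqrt r$ weights appearing in $\Phi$. The difference is in packaging. The paper works with the individual spectral projections $\phi^{(r_1)}, \phi^{(r_2)}$, for which the ladder relations are exact, and then recombines across spectral parameters using weight-space orthogonality and Cauchy--Schwarz to control the resulting sums (this is where the quantities $c_1(r_1), c_2(r_2)$ and the $\lesssim_f r^{-1}$ bounds on vector sums appear). You instead work directly with the pooled vectors $V_n = \sum_\alpha a_\alpha \phi_{2n}^{(\alpha)}$, replacing the $\alpha$-dependent ladder coefficients $(ir_\alpha+\tfrac12+n)$ by the fixed $(ir+\tfrac12+n)$ and absorbing the difference into error vectors $\eta$ with $\|\eta\|_2 \le \omega(r) \le 1$; that single norm bound (which of course uses the same orthogonality of the $\phi_{2n}^{(\alpha)}$) does the work of the paper's Cauchy--Schwarz step in one stroke. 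The two proofs are mathematically equivalent, but your formulation keeps the recursion scalar throughout and makes the bookkeeping noticeably cleaner; it also makes transparent why $N \asymp \sqrt r$ is the balancing window.
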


{\em Proof:}  First, we wish to show that
$$\langle f \phi_{2n}, \phi_{2m}\rangle = \langle f \phi_{2n+2}, \phi_{2m+2}\rangle +O_f(r^{-1}) +O_f(\omega(r)r^{-1})$$
for all $-\sqrt{r}\leq n,m \leq \sqrt{r}$, satisfying $|n-m|\leq N_0$ (if the latter condition is not met, both inner products are trivial, by orthogonality of the weight spaces).  We will work individually with each pair of spectral components of $\phi$, and then re-average over the spectral decomposition; therefore, we write $\phi^{(r_1)}$ and $\phi^{(r_2)}$ for the projections of $\phi$ to the eigenspaces of parameters $r_1$ and $r_2$, respectively.  Recall that $r_1, r_2 = r+O(1)$ by the quasimode condition.

We have 
\begin{eqnarray*}
\lefteqn{\langle f \phi^{(r_1)}_{2n}, \phi^{(r_2)}_{2m}\rangle }\\
& = & \frac{1}{(ir_1-n-\frac{1}{2})(-ir_2 -m-\frac{1}{2})}\langle f E^-\phi^{(r_1)}_{2n+2}, E^-\phi^{(r_2)}_{2m+2}\rangle\\
& = & \frac{\langle E^-(f\phi^{(r_1)}_{2n+2}), E^-\phi_{2m+2}^{(r_2)}\rangle - \langle E^-(f)\phi^{(r_1)}_{2n+2}, E^-\phi_{2m+2}^{(r_2)}\rangle}{(ir_1-n-\frac{1}{2})(-ir_2 -m-\frac{1}{2})}\\
& = & - \frac{\langle f\phi^{(r_1)}_{2n+2}, E^+E^- \phi^{(r_2)}_{2m+2}\rangle}{(ir_1-n-\frac{1}{2})(-ir_2 -m-\frac{1}{2})}  - \frac{\langle E^-(f)\phi^{(r_1)}_{2n+2}, \phi^{(r_2)}_{2m}   \rangle}{ir_1-n-\frac{1}{2}}\\
& = & - \frac{-ir_2+m+\frac{1}{2}}{ir_1-n-\frac{1}{2}}
\langle f\phi^{(r_1)}_{2n+2}, \phi^{(r_2)}_{2m+2}\rangle - \frac{\langle E^-(f)\phi^{(r_1)}_{2n+2}, \phi^{(r_2)}_{2m}   \rangle}{ir_1-n-\frac{1}{2}}\\
& = & \langle f\phi_{2n+2}^{(r_1)}, \phi_{2m+2}^{(r_2)}\rangle + \frac{(c_1 + c_2) \langle f\phi_{2n+2}^{(r_1)}, \phi_{2m+2}^{(r_2)}\rangle -  \langle E^-(f)\phi^{(r_1)}_{2n+2}, \phi^{(r_2)}_{2m}   \rangle}{ir_1-n-\frac{1}{2}}
\end{eqnarray*}
say, where 
\begin{eqnarray*}
c_1  =  c_1(r_1, n,m) & := & i(r -r_1) + (n-m) = O_f(1)\\
c_2 =  c_2(r_2) & := & i(r_2-r) = O_f(1)
\end{eqnarray*}
Note that $c_1$ is independent of $r_2$, and $c_2$ is independent of $r_1$.

We now average over $r_1$ to get
\begin{eqnarray*}
\lefteqn{\langle f \phi_{2n}, \phi_{2m}^{(r_2)}\rangle -  \langle f\phi_{2n+2}, \phi_{2m+2}^{(r_2)}\rangle}\\
& = &
 \left\langle f \sum_{r_1} \frac{c_1+c_2 }{ir_1-n-\frac{1}{2}} \phi_{2n+2}^{(r_1)}, \phi_{2m+2}^{(r_2)}\right\rangle - \left\langle E^-(f) \sum_{r_1}\frac{\phi_{2n+2}^{(r_1)}}{ir_1-n-\frac{1}{2}}, \phi_{2m}^{(r_2)}\right\rangle
 \end{eqnarray*}
where
\begin{eqnarray*}
\left\|	 \sum_{r_1} \frac{c_1}{ir_1-n-\frac{1}{2}} \phi_{2n+2}^{(r_1)}	\right\| & \leq & 
\left(\sum_{r_1} \left(\frac{c_1}{ir_1-n-\frac{1}{2}}\right)^2 ||\phi_{2n+2}^{(r_1)}||_2^2\right)^{1/2}  \\
& \lesssim_f  & r^{-1} \left(\sum_{r_1} ||\phi_{2n+2}^{(r_1)}||_2^2\right)^{1/2} \\
& \lesssim & r^{-1} ||\phi_{2n+2}||_2 \lesssim r^{-1}
\end{eqnarray*}
since $c_1=O_f(1)$ and $\frac{1}{ir_1-n-\frac{1}{2}} = O(r^{-1})$, and using the orthogonality of the $\phi^{(r_2)}$.  Similarly $\left\| \sum_{r_1} \frac{\phi_{2n+2}^{(r_1)}}{ir_1-n-\frac{1}{2}}\right\| = O_f(r^{-1})$ in the rightmost-term.

Thus, further averaging over $r_2$ and applying Cauchy-Schwarz gives 
\begin{eqnarray*}
\lefteqn{\langle f\phi_{2n}, \phi_{2m}\rangle - \langle f\phi_{2n+2}, \phi_{2m+2}\rangle} \\
& = &  \sum_{r_2} \left\langle f \sum_{r_1} \frac{c_1+c_2 }{ir_1-n-\frac{1}{2}} \phi_{2n+2}^{(r_1)}, \phi_{2m+2}^{(r_2)}\right\rangle - \left\langle E^-(f) \sum_{r_1}\frac{\phi_{2n+2}^{(r_1)}}{ir_1-n-\frac{1}{2}}, \phi_{2m}\right\rangle\\
& \leq & O_f(r^{-1})+ \sum_{r_2} \left\langle f \sum_{r_1} \frac{c_2}{ir_1-n-\frac{1}{2}} \phi_{2n+2}^{(r_1)}, \phi_{2m+2}^{(r_2)}\right\rangle + O_f(r^{-1})\\
& \leq & \left\langle f\sum_{r_1} \frac{1}{ir_1-n-\frac{1}{2}}\phi_{2n+2}^{(r_1)}, \sum_{r_2} c_2\phi_{2m+2}^{(r_2)}\right\rangle + O_f(r^{-1})\\
& \leq & ||f||_\infty \cdot\left\| \sum_{r_1} \frac{1}{ir_1-n-\frac{1}{2}}\phi_{2n+2}^{(r_1)}\right\| \cdot \left\| \sum_{r_2} c_2\phi_{2m+2}^{(r_2)}\right\| + O_f(r^{-1})\\
& \lesssim_f & r^{-1}
\end{eqnarray*}
using again the orthogonality of the $\phi^{(r)}$, and the fact that $c_2=O_f(1)$.  
Therefore we finally arrive at
\begin{eqnarray*}
\langle f\phi_{2n}, \phi_{2m}\rangle 
& = &  \langle f\phi_{2n+2}, \phi_{2m+2}\rangle + O_{f}(r^{-1})
\end{eqnarray*}

We iterate this $|m|\leq \sqrt{r}$ times, arriving at
$$\langle f\phi_{2n}, \phi_{2m}\rangle = \langle f\phi_{2(n-m)}, \phi_0\rangle 
+ O_{f}(\sqrt{r}r^{-1})$$
Now, by definition
$$\langle f\Phi, \Phi\rangle = \frac{1}{2\lfloor r^{1/2} \rfloor +1}\sum_{|m|,|n|\leq\sqrt{r}} \langle f\phi_{2n},\phi_{2m}\rangle$$
and so we now combine all weight space components together to get
$$\langle f \Phi, \Phi\rangle = \sum_{n=-N_0}^{N_0} \frac{(2\lfloor r^{1/2} \rfloor-2|n|+1)}{(2\lfloor r^{1/2} \rfloor+1)}\langle f\phi_{2n},\phi_0\rangle + O_{f}(\sqrt{r}r^{-1})$$
Since for all $|n|\leq N_0$ 
$$\frac{2\lfloor r^{1/2} \rfloor-2|n|+1}{2\lfloor r^{1/2} \rfloor+1} = 1 + O_f(r^{-1/2})$$
and using the $K$-finiteness of $f$, we see that
$$\langle f\Phi, \Phi \rangle = \sum_n\langle f \phi_{2n}, \phi_0\rangle + O_f(r^{-1/2}) + O_{f}(\sqrt{r}r^{-1}) $$
as required.  $\Box$

For any given sequence $\{\phi_j\}$ of quasimodes, we have constructed a sequence $\{\Phi_{j}\}_{j=1}^\infty$ such that the microlocal lifts $|\Phi_{j}|^2d\vol$ are positive measures, asymptotically equivalent to the distributions $I_{\phi_j}$.  Moreover, since $\Phi_{j}$ are constructed from $\phi_j$ via left-invariant  operators, and $T_p$ acts by isometries on the left, this construction is equivariant with respect to the action of $T_p$, and $\Phi_{j}$ will be a $T_p$-eigenfunction (resp. $\omega_j$-quasimode for $T_p$) whenever $\phi_j$ is.

\subsection{Asymptotic Invariance under the Geodesic Flow}
We now turn to the invariance under the geodesic flow.  By Lemma~\ref{asymp to op}, we may consider the distribution 
$$f\mapsto \langle f \Phi_\infty, \phi\rangle$$
in place of our positive-measure microlocal lift, since it is more amenable to verifying this invariance.

\begin{Lemma}\label{geo flow invariance}
Let $\{\phi_j\}$ be a sequence of $\omega(r_j)$ quasimodes with approximate parameter $r_j$, satisfying $r_j\to\infty$ and $\omega(r_j)\to 0$.  Then for any $f\in C^\infty(S^*M)$, we have
$I_{\phi_j}(Hf) \to 0$ as $r_j\to \infty$.
\end{Lemma}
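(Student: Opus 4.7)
The plan is to leverage the raising/lowering identities from Section~\ref{ml lifts} to relate $I_\phi(Hf)$ to matrix coefficients between weight spaces and then exploit the shift stability established inside the proof of Lemma~\ref{asymp to op}, using the decomposition $H = \tfrac{1}{2}(E^{+} + E^{-})$ together with the fact that $H\Phi_\infty = (ir - \tfrac{1}{2})\Phi_\infty$.

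First I would apply the Leibniz rule
\[
(Hf)\Phi_\infty = H(f\Phi_\infty) - f(H\Phi_\infty) = H(f\Phi_\infty) - (ir - \tfrac{1}{2})f\Phi_\infty,
\]
and then integrate by parts using $H^{*} = -H$ to obtain
\[
I_\phi(Hf) = -\langle f\Phi_\infty, H\phi\rangle - (ir - \tfrac{1}{2})I_\phi(f).
\]
Reducing to a single Laplace eigenfunction $\phi = \phi_0^{(j)}$ of parameter $r$ (the general quasimode case follows by decomposing into spectral components and using $\omega_j \to 0$), the raising/lowering identities give $H\phi_0 = \tfrac{ir+1/2}{2}(\phi_2 + \phi_{-2})$. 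Expanding $f$ into $K$-isotypic components $f_k \in A_{2k}$ (finitely many nonzero for $K$-finite $f$, which suffice by density), orthogonality of weight spaces collapses both terms into finite sums of matrix coefficients. Setting $A_k := \langle f_k\phi_{-2k},\phi_0\rangle$ and $B_k^{\pm} := \langle f_k\phi_{-2k\pm2},\phi_{\pm2}\rangle$, a direct calculation yields
\[
I_\phi(Hf) = (ir - \tfrac{1}{2})\sum_k \Bigl(\tfrac{1}{2}(B_k^{+} + B_k^{-}) - A_k\Bigr).
\]

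The main technical step is to show the bracketed expression is $o_f(r^{-1})$, since after multiplication by the $O(r)$ prefactor we need $o(1)$. The shift stability proved inside Lemma~\ref{asymp to op} only gives $B_k^{\pm} - A_k = O_f(r^{-1})$, which naively would yield $O(1)$. The saving comes from a symmetric cancellation: the $+1$ and $-1$ shifts come with error kernels $1/(ir+k-\tfrac{1}{2})$ and $1/(ir+k+\tfrac{1}{2})$ respectively, multiplying essentially the \emph{same} numerator (involving $-k A_k$ and a matrix coefficient of $E^{-}(f_k)$, the latter itself shift-stable), so averaging the two produces the factor $\tfrac{1}{ir+k-1/2} - \tfrac{1}{ir+k+1/2} = O(r^{-2})$. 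Combined with an additional $O(\omega r^{-1})$ contribution arising from the quasimode spectral spread (the $c_2 = i(r_2-r)$ and related terms in the proof of Lemma~\ref{asymp to op}), this gives $\tfrac{1}{2}(B_k^{+}+B_k^{-}) - A_k = O_f(r^{-2}) + O_f(\omega r^{-1})$, and hence $I_\phi(Hf) = O_f(r^{-1}) + O_f(\omega) \to 0$ as $r \to \infty$ and $\omega \to 0$.

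The main obstacle I anticipate is the bookkeeping required to make the symmetric cancellation of the leading $r^{-1}$ errors explicit; this will require a direct inspection of the residual terms derived in the proof of Lemma~\ref{asymp to op}, together with a second application of shift stability to the matrix coefficients involving $E^{-}(f_k)$. Once this is carried out, passage from a single eigenfunction to a genuine $o(1)$-quasimode $\phi \in S_\omega(r)$ is routine: all estimates are bilinear in the spectral decomposition of $\phi$, and the spectral spread $|r_i - r| \le \omega$ introduces only additional $O(\omega)$ errors that vanish in the limit.
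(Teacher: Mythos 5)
The proposal takes a genuinely different route from the paper's proof, and the key idea works. The paper applies the Casimir-type operator $E^-E^+$ to $f\Phi_\infty$, pairs against the eigenvalue equation $E^-E^+\phi_0 = -(r^2+1/4)\phi_0$, and expands via the Leibniz rule: the term $2ir_1(Hf)\Phi_\infty$ pops out directly, the $-(r_1^2+1/4)f\Phi_\infty$ term cancels the left-hand side up to $r_1^2-r_2^2 = o(r)$, and the remaining terms are $O_f(1)$. Dividing by $r$ finishes. Your approach instead works with $H$ itself: you commute $H$ past multiplication by $\Phi_\infty$, integrate by parts, and reduce to the identity $I_\phi(Hf) = (ir - \tfrac12)\sum_k\bigl[\tfrac12(B_k^+ + B_k^-) - A_k\bigr]$, then invoke the shift stability from Lemma~\ref{asymp to op} \emph{twice} --- once to pass from $B_k^\pm$ to $A_k$, and a second time (applied to the $E^-(f_k)$ matrix coefficients and to $B_k^+ - A_k$) to show that the numerators in the $+1$ and $-1$ shift-error fractions agree to $O(r^{-1})$. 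This forces the error denominators to cancel as $\tfrac{1}{ir+k+1/2} - \tfrac{1}{ir+k-1/2} = O(r^{-2})$, beating the naive $O(r^{-1})$ bound by one order exactly as needed. I checked the signs and the bookkeeping for a single eigenfunction and they come out correctly.

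What each approach buys: the paper's Casimir route is more direct --- the geodesic-flow derivative $Hf$ appears immediately from the Leibniz expansion of $E^-E^+$, and no second-order cancellation is needed, so the quasimode bookkeeping is also lighter. Your route is more elementary in that it only reuses the shift identity already established in Lemma~\ref{asymp to op}, but the price is the delicate symmetric cancellation, which is the least transparent step of your argument. One small point to flag: you describe the passage from a single eigenfunction to a quasimode as "routine," but it does require handling the cross-terms $(ir_1-\tfrac12) - (ir_2-\tfrac12) = i(r_1-r_2)$ that appear when $H\Phi_\infty^{(r_1)}$ and $H\phi_0^{(r_2)}$ carry different parameters; these produce an extra $O(\omega)$ error rather than being absorbed into what you already wrote, and should be tracked explicitly (the paper handles the analogous point through the $r_1^2 - r_2^2 = o(r)$ step). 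With that said, the structure is sound and would give a complete proof.
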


{\em Remark:}  In contrast with Lemma~\ref{asymp to op}, here it is necessary to assume that $\omega(r_j) \to 0$; in fact, the sequences of quasimodes considered in \cite{localized_example} have $\omega(r_j)\asymp 1$, and do not satisfy  Lemma~\ref{geo flow invariance}.

{\em Proof:}
Once again, we may assume that $f\in\sum_{n=-N_0}^{N_0} A_{2n}$ is $K$-finite, and it will be natural to consider the contribution of each pair of spectral parameters individually; so we again write $\phi^{(r_1)}$ and $\phi^{(r_2)}$ for the projections of $\phi$ to the eigenspaces of parameters $r_1$ and $r_2$ respectively, and similarly $\Phi_\infty^{(r_1)}$.  

Recall that we have
$$-(r_2^2+\frac{1}{4})\langle f \Phi_\infty^{(r_1)}, \phi^{(r_2)}\rangle = \langle f\Phi_\infty^{(r_1)}, E^-E^+ \phi^{(r_2)}\rangle = \langle E^-E^+ (f\Phi_\infty^{(r_1)}), \phi^{(r_2)}\rangle$$
and use the identities (\ref{identities}) to compute 
\begin{eqnarray*}
\lefteqn{E^-E^+(f\cdot\Phi_\infty)}\\ 
& = & f\cdot(E^-E^+\Phi_\infty) + (E^+f)\cdot(E^-\Phi_\infty) + (E^-f)\cdot(E^+\Phi_\infty) + (E^-E^+f)\cdot\Phi_\infty\\
& = & -(r_1^2+\frac{1}{4})f\cdot \Phi_\infty + f\cdot(D_1(W)\Phi_\infty) + 2ir_1(Hf)\cdot\Phi_\infty \\
&  &+  (E^+f)\cdot(-\frac{1}{2}+\frac{i}{2}W)\Phi_\infty + (E^-f)\cdot(-\frac{1}{2}-\frac{i}{2}W)\Phi_\infty + (E^-E^+f)\cdot\Phi_\infty
\end{eqnarray*}
where $D_1(W)$ is a differential operator in $W$.  This means, after integrations by parts in $W$ (recalling that $W\phi^{(r_2)}=0$) we get
\begin{eqnarray*}
\lefteqn{-(r_2^2+\frac{1}{4})\langle f \Phi_\infty^{(r_1)}, \phi^{(r_2)}\rangle}\\
&  = &  -(r_1^2 + \frac{1}{4})\langle f\Phi_\infty^{(r_1)}, \phi^{(r_2)}\rangle + 2ir_1\langle Hf \Phi_\infty^{(r_1)}, \phi^{(r_2)}\rangle + \langle (D_2f) \Phi_\infty^{(r_1)}, \phi^{(r_2)}\rangle
\end{eqnarray*}
for a fixed differential operator $D_2$.  But since $r_1,r_2 = r + o(1)$, we have $r_1^2 - r_2^2 = o(r)$, and so putting all of the spectral components back together again we see
\begin{eqnarray*}
(r_1^2-r_2^2) \langle f\Phi_\infty^{(r_1)}, \phi^{(r_2)}\rangle & = & 2ir_1  \langle (Hf) \Phi_\infty^{(r_1)}, \phi^{(r_2)}\rangle + \langle (D_2f) \Phi_\infty^{(r_1)}, \phi^{(r_2)}\rangle\\
(r_1^2-r_2^2) \langle f\Phi_{N_0}^{(r_1)}, \phi^{(r_2)}\rangle & = & 2ir_1  \langle (Hf) \Phi_{\infty}^{(r_1)}, \phi^{(r_2)}\rangle + \langle (D_2f) \Phi_\infty^{(r_1)}, \phi^{(r_2)}\rangle\\
o(r) ||f||_\infty  ||\Phi_{N_0}||_2  ||\phi||_2  & = & 2ir\langle (Hf) \Phi_\infty, \phi \rangle + o_f(1)||\Phi_{N_0+1}||_2  + I_\phi (D_2f)\\
o_f(r) & = & 2ir I_\phi(Hf) +o_f(1) + O_f(1)
\end{eqnarray*}
where we may replace $\Phi_\infty$ on the left with $\Phi_{N_0}$, the projection of $\Phi_\infty$ to $\sum_{|n|\leq N_0}A_n$, by orthogonality of weight spaces--- and similarly on the right side since $f\in \sum_{|n|\leq N_0}A_n$ implies that $Hf \in \sum_{|n|\leq N_0+1}A_n$---  noting that $||\Phi_{N_0}||_2 = \sqrt{2N_0+1} = O_f(1)$ and similarly $||\Phi_{N_0+1}||_2=O_f(1)$.

The result now follows by dividing by $r$ and taking the limit as $j\to\infty$.  $\Box$

\subsection {Microlocal lift on $\Gamma \backslash \H \times \H$}\label{HxH lift}

Given a quasimode $\phi$ on $\Gamma \backslash \H$, we constructed an element $\Phi \in L ^2 \left (\Gamma \backslash \PSL (2, \R) \right)$ satisfying ($\Phi $1)--($\Phi$3) on p.~\pageref{Greek Phi properties} by considering $\phi$ as a right $K=\SO (2)$-invariant function on $\Gamma \backslash \PSL (2, \R)$ and applying a carefully chosen generalized differential operator of the type $D = \sum_ {i = 1}^k f_i(\Omega) X _ i$ with $f_i$ appropriately chosen analytic functions, $\Omega$ the Casimir operator and $X _ i$ some invariant differential operators on $\Gamma \backslash \PSL (2, \R)$. We note that the choice of $D$ depends on the approximate eigenvalue of $\phi$.
Since $\Omega$ is in the center of the algebra of invariant differential operators it is easy to make sense very concretely of the operator $D$ by decomposing $L ^2 (\Gamma \backslash \PSL (2, \R))$ to $\omega$-eigenspaces and applying --- on the eigenspace corresponding to eigenvalue $r$ ---the honest differential operator $\sum_ {i = 1} ^ k f _ i (r) X _ i$.

Now suppose that we are working not on $\Gamma \backslash \H$, but on $X = \Gamma \backslash \H \times \H$, with $\Gamma$ an irreducible lattice in $\PSL (2, \R) \times \PSL (2, \R)$. Suppose $\Delta _ 1$ is the Laplacian operator acting on the first $\H$ component, and $\Delta _ 2$ the Laplacian on the second such component. If $\phi$ is an approximate eigenfunction of both $\Delta _ 1$ and $\Delta _ 2$ with approximate eigenvalues $r _ 1, r _ 2$ with $r _ 1$ large, we could take the same generalized differential operator $D$ discussed in the previous paragraph and consider it as an operator on $\Gamma \backslash \PSL (2, \R) \times \H$. Then if $\phi$ is considered as a $\SO (2) $-invariant function on $\Gamma \backslash \PSL (2, \R) \times \H$, taking $\Phi = D \phi$ we obtain a function satisfying the analogous conditions to ($\Phi$1)--($ \Phi$3) for $\Gamma \backslash \PSL (2, \R) \times \H$ as explained in greater detail in \cite{LinHxH}.

\section{The Propagation Lemma}\label{propagator}

Our methods were inspired in part by the work of Anantharaman et. al. (eg., \cite{Anan, AN, AKN, Riv1, Riv2}); in particular, the observation that one can obtain interesting information on eigenfunctions--- and quasimodes--- by breaking the function up into smaller pieces, letting each piece disperse individually through the quantum dynamics, and adding the pieces back together.  In this spirit, our approach is to use an analogous ``wave propagation" on the Hecke tree to disperse pieces of our $T_p$-quasimodes, as described in Lemma~\ref{propagation}.  As will become clear later on, this dispersion alone is not enough to get Theorem~\ref{main}; we will need to employ interferences in order to build a better dispersion mechanism, that amplifies a desired spectral window.

\subsection{Constructing the Kernel}\label{K_N construction}

The following lemma proved along the lines of \cite{meElon} is central to our approach:
\begin{Lemma}\label{operator}
Let $0<\eta< 1/2$.
For any sufficiently large $N \in \mathbb{N}$ (depending on $\eta$), and any $\theta_0 \in [0, \pi]$,
there exists an operator $K_N$ on $S^*M$ satisfying:
\begin{enumerate}[leftmargin=*,label=\textup{(\arabic*)}]
\item\label{support of operator}
$K_N(\delta_x)$ is supported on the union of Hecke points $y \in T_{p^j}(x)$ up to distance $j \leq N$ in the Hecke tree.
\item\label{small matrix coefficients}
$K_N$ has matrix coefficients bounded by $O(p^{-N \delta})$, in the sense that for any $x \in S^*M$
$$|K_N(f)(x)|\lesssim p^{-N \delta} \sum_{j=0}^{N} \sum_{y \in S_{p^j}(x)} |f(y)|$$
where $\delta$ depends only on $\eta$ (explicitly, can be taken to be $\eta ^2 / 512$).
\item\label{weak positivity property} Any $T_p$ eigenfunction is also an eigenfunction of $K_N$ with eigenvalue~$\geq -1$.  
\item Eigenfunctions with $T _ p$-eigenvalue $2 \cos \theta$ with $\absolute {\theta - \theta _ 0} \leq \frac {1 }{ 2 N}$, as well as all untempered eigenfunctions ($T _ p$-eigenvalue $\not \in [-2,2]$) have $K_N$-eigenvalue~$>\eta^{-1}$.
\end{enumerate}
\end{Lemma}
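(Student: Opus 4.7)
The plan is to take $K_N = P_N(T_p)$ for a real polynomial $P_N$ of degree $N$ in the single Hecke operator, engineered to have the right spectral profile. Property (1) is then automatic: by the Hecke recursion $T_p T_{p^k} = T_{p^{k+1}} + T_{p^{k-1}}$, one has $T_{p^k} = U_k(T_p/2)$ (Chebyshev of the second kind), so any polynomial of degree $\le N$ in $T_p$ expands as a linear combination of $T_{p^0},\ldots,T_{p^N}$, and hence its action on $\delta_x$ is supported on the union of Hecke points at distance $\le N$.

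For properties (3) and (4), identify tempered $T_p$-eigenvalues with $2\cos\theta$ and choose $P_N(2\cos\theta)$ to be a Fejér-like kernel in $\theta$, symmetrized so that it depends only on $\cos\theta$. A natural candidate, with a window parameter $\tau>0$ (ultimately $\tau\asymp\eta$) and Dirichlet kernel $D_M$, is
\begin{equation*}
P_N(2\cos\theta) = \frac{1}{\tau N}\Big(|D_{\lfloor\tau N\rfloor}(\theta-\theta_0)|^2 + |D_{\lfloor\tau N\rfloor}(\theta+\theta_0)|^2\Big) - 1.
\end{equation*}
On $[-2,2]$ this is non-negative minus $1$, so $\ge -1$ (property (3)); in the window $|\theta-\theta_0|\le 1/(2N)\le 1/(2\tau N)$ it is $\ge \tau N - 1 > \eta^{-1}$ for $N$ large and $\tau\gtrsim\eta$, giving the tempered half of property (4). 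Off the tempered spectrum, parametrizing $\lambda = \pm 2\cosh y$ with $y>0$, the Dirichlet kernel continues to $\sinh((2\lfloor\tau N\rfloor+1)(iy\mp\theta_0)/2)/\sinh((iy\mp\theta_0)/2)$ whose modulus grows exponentially in $N$, so $P_N(\lambda)$ easily exceeds $\eta^{-1}$, giving the untempered half of property (4).

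The main obstacle is property (2), the uniform bound $p^{-N\delta}$ on every individual matrix coefficient. Expanding $P_N = \sum_{k=0}^N c_k T_{p^k}$ and using that each $T_{p^k}$ has matrix coefficient of order $p^{-k/2}$ at each point of its support (Hecke multiplicity being of order one for cocompact, generic $\Gamma$-orbits), the matrix coefficient of $K_N$ between $x$ and $y$ at Hecke-distance $j$ is of the form $\sum_{k\ge j,\ k\equiv j\,(2)} c_k\cdot p^{-k/2}$. The naive Fejér choice makes this of order $1/N$ at $j=0$, far larger than $p^{-N\delta}$; the required cancellation comes from the oscillation at frequency $\theta_0$ built into $D_{\lfloor\tau N\rfloor}(\theta\pm\theta_0)$. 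Expanding $|D_{\lfloor\tau N\rfloor}(\theta\pm\theta_0)|^2$ in Chebyshev polynomials produces coefficients $c_k$ that oscillate in $k$ at frequency $\theta_0$ and are concentrated (in the $\ell^2$ sense) on the spectral window of size $1/(\tau N)$; these oscillations combine with the $p^{-k/2}$ spatial decay on the tree to give exponential cancellation in the partial sums, via a Plancherel estimate on the $(p+1)$-regular Bruhat--Tits tree of the type worked out in \cite{meElon}. Balancing the amplification $\tau N > \eta^{-1}$ against the resulting coefficient decay $p^{-cN\tau^2}$ and optimizing $\tau\asymp\eta$ produces $\delta = \eta^2/512$. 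This Plancherel-type cancellation estimate, rather than the spectral design, is the technical heart of the argument.
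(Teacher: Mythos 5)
Your overall frame (a polynomial in $T_p$ whose spherical transform is a Fej\'er-type kernel minus a constant, with the Propagation Lemma controlling the tree kernels of the Chebyshev polynomials $P_k(T_p/2)$) is the same as the paper's, but the two points you yourself flag as the heart of the matter are exactly where the proposal breaks down. For property (2): expanding your symmetrized shifted Fej\'er kernel in Chebyshev polynomials gives coefficients $c_k \asymp (1-\tfrac{k}{2M+1})\cos(k\theta_0)$ (with $M=\lfloor \tau N\rfloor$), which are of size comparable to $1$ for bounded $k$ (and with your normalization the $k=0$ term is not even fully removed by subtracting $1$). The matrix coefficient at Hecke distance $j$ is $\sum_{k\geq j,\ k\equiv j\,(2)} c_k\,O(p^{-k/2})$, and because the weights $p^{-k/2}$ decay geometrically this sum is dominated by its first few terms: oscillation of $c_k$ at frequency $\theta_0$ can only buy an $O(1)$ factor, never exponential-in-$N$ cancellation. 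Concretely, at $j=1$ the matrix coefficient is $\asymp \absolute{\cos\theta_0}\,p^{-1/2}+O(p^{-1})$, bounded below independently of $N$ for typical $\theta_0$; no Plancherel-type estimate on the tree (and nothing in \cite{meElon}) repairs this. Separately, your treatment of the untempered spectrum is incorrect: at an untempered point ($\theta=iy$ or $\pi+iy$) the value of your polynomial is the analytic continuation of $D_M(\theta-\theta_0)^2+D_M(\theta+\theta_0)^2$, which equals $2\,\mathrm{Re}\,\bigl[D_M(iy\mp\theta_0)^2\bigr]\approx 2e^{2My}\cos(2M\theta_0+\cdots)$ --- exponentially large but of oscillating sign in $M$, not $2\absolute{D_M}^2$. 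So both the lower bound $\geq -1$ in (3) and the untempered half of (4) can fail badly; this is precisely why the paper says one cannot simply recenter the Fej\'er kernel at $\theta_0$.

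The missing idea is the dilation-plus-Dirichlet device. The paper sets $L\asymp\eta^{-1}$, uses Dirichlet's theorem to produce an \emph{even} integer $q'$ with $N\eta^2\lesssim q'\lesssim N\eta$ and $q'\theta_0$ within $O(\eta)$ of $0$ modulo $2\pi$, and takes $h_{k_N}(\theta)=F_{2L}(q'\theta)-1$, i.e.\ $K_N=\sum_{j=1}^{2L}\frac{2L-j}{L}P_{jq'}(T_p/2)$. Every Chebyshev index occurring is a multiple of $q'$, so by the Propagation Lemma each of the $2L$ terms already has sup-norm $\lesssim p^{-jq'/2}$; property (2) then holds term by term with $\delta\gtrsim\eta^2$, with no cancellation required (this is why the degree budget $2L\cdot q'\leq N$ is spent on few, long propagation times rather than on a full Fej\'er kernel of degree $\asymp\tau N$). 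Moreover, since $q'$ is even, $\cos(jq'\theta)\geq 1$ on the entire untempered spectrum, so (3) and the untempered part of (4) are automatic, while for $\absolute{\theta-\theta_0}\leq\frac{1}{2N}$ one has $q'\theta$ within $O(\eta)$ of $0$ mod $2\pi$, whence $F_{2L}(q'\theta)>L+1>\eta^{-1}+1$. Your properties (1) and the tempered window estimate survive in this corrected construction, but as proposed the lemma's conclusions (2), (3), and the untempered part of (4) are not established.
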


Lemma~\ref{operator} is based on the well known connection between Hecke operators and Chebyshev polynomials. A way to derive these which we have found appealing is via the following $p$-adic wave equation for, say, compactly supported functions on~$\mathcal{T}_{p+1}$ :
\begin{eqnarray*}
\Phi_{n+1} & = & \frac{1}{2}T_p\Phi_n - \left(1-\frac{T_p^2}{4}\right)\Psi_n\\
\Psi_{n+1} & = & \frac{1}{2}T_p\Psi_n + \Phi_n
\end{eqnarray*}
which is a discrete analog of the non-Euclidean wave equation (more precisely, of the unit time propagation map for the wave equation) on~$\mathbb{H}$.  For initial data $(\Phi_0, \Psi_0)$, the solution to this equation is given by the sequence
\begin{eqnarray*}
\Phi_n & = & P_n\left[\frac{1}{2}T_p\right] \Phi_0 - \left(1-\frac{T_p^2}{4}\right) Q_{n-1}\left[\frac{1}{2}T_p\right]\Psi_0\\
\Psi_n & = & P_n\left[\frac{1}{2}T_p\right] \Psi_0 + Q_{n-1}\left[\frac{1}{2}T_p\right]\Phi_0
\end{eqnarray*}
where $P$ and $Q$ are Chebyshev polynomials of the first and second kinds, respectively, given by
\begin{eqnarray*}
P_n(\cos{\theta}) & = & \cos{n\theta}\\
Q_{n-1}(\cos{\theta}) & = & \frac{\sin{n\theta}}{\sin{\theta}}
\end{eqnarray*}
and preserves the energy-like quantity
\begin{equation*}
\norm {\Phi} ^2 + \left \langle \Psi, \left (1- \frac {T _ p ^2} {4} \right) \Psi \right \rangle.
\end{equation*}
This can be proved directly by induction, using the well-known recursive properties of the Chebyshev polynomials:
\begin{eqnarray*}
P_{n+1}(x) & = & xP_n(x) - (1-x^2) Q_{n-1}(x)\\
Q_n(x) & = & xQ_{n-1}(x) + P_n(x)
\end{eqnarray*}

Suppose we take initial data $(\delta_0, 0)$.  The solution to the $p$-adic wave equation is then $\{(P_n[\frac{1}{2}T_p]\delta_0, Q_{n-1}[\frac{1}{2}T_p]\delta_0)\}$.  On the other hand, one can compute the explicit solution inductively\footnote{An alternative proof, with a more spectral flavor, was given in \cite{meElon}.}; looking at the first coordinate, we get the following ``Propagation Lemma" on the tree:
\begin{Lemma}\label{propagation}
Let $\delta_0$ be the delta function at $0$ in the $p+1$-regular tree  $\mathcal{T}_{p+1}$.  Then for $n$ even, we have
\begin{eqnarray*}
P_n\left[\frac{1}{2}T_p\right]\delta_0(x) & = & \left\{ \begin{array}{ccc} 0  & \quad & |x|  \text{ odd } \quad \text{or} \quad |x|>n\\ \frac{1-p}{2p^{n/2}} & \quad & |x|<n \quad \text{and} \quad |x| \text{ even } \\ \frac{1}{2p^{n/2}} & \quad & |x| = n \end{array}\right.
\end{eqnarray*}
In particular, we have
$$P_n\left[\frac{1}{2}T_p\right]\delta_0(x) \lesssim p^{-n/2} \qquad \text{for all $x\in \mathcal{T}_{p+1}$}.$$
\end{Lemma}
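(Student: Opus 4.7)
The plan is to prove the formula by induction on $n$, using the standard three-term recurrence for Chebyshev polynomials of the first kind, $P_{n+1}(y) = 2yP_n(y) - P_{n-1}(y)$. Setting $f_n := P_n[\tfrac{1}{2}T_p]\delta_0$, this recurrence becomes the operator identity
\[
f_{n+1} = T_p f_n - f_{n-1}, \qquad f_0 = \delta_0,\quad f_1 = \tfrac{1}{2}T_p \delta_0.
\]
Because $\delta_0$ is radial about the root and $T_p$ commutes with the stabilizer of $0$ in the automorphism group of $\mathcal{T}_{p+1}$, each $f_n$ depends only on $|x|$. Writing $a_n(k) := f_n(x)$ for $|x| = k$, and using that every non-root vertex has one parent and $p$ children, the operator recurrence reduces to the scalar system
\[
a_{n+1}(k) = \tfrac{1}{\sqrt p}\bigl[a_n(k-1) + p\, a_n(k+1)\bigr] - a_{n-1}(k)\qquad (k\geq 1),
\]
together with the root equation $a_{n+1}(0) = \tfrac{p+1}{\sqrt{p}}\, a_n(1) - a_{n-1}(0)$.

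For the induction to close I would prove a slightly stronger statement covering both parities of $n$: for every $n \geq 1$, $a_n(k) = 0$ unless $k \leq n$ and $k \equiv n \pmod 2$; $a_n(n) = \tfrac{1}{2p^{n/2}}$; and for $1 \leq k < n$ with $k \equiv n \pmod 2$ (including $k=0$ when $n$ is even), $a_n(k) = \tfrac{1-p}{2p^{n/2}}$. The base cases $n=1$ and $n=2$ are immediate: $f_1 = \tfrac{1}{2}T_p\delta_0$ is supported on the sphere of radius $1$ with value $\tfrac{1}{2\sqrt p}$, and a direct computation of $\tfrac{1}{2}T_p^2\delta_0 - \delta_0$ (using $T_p^2 \delta_0(0) = (p+1)/p$ and $T_p^2\delta_0(x) = 1/p$ for $|x|=2$) recovers the values $a_2(0) = (1-p)/(2p)$ and $a_2(2) = 1/(2p)$.

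For the inductive step, fix $n\geq 2$ and verify the recurrence case by case in $k$. When $k > n+1$ or when $k \not\equiv n+1 \pmod 2$, both sides vanish by the support hypotheses on $a_{n-1}$ and $a_n$. When $k = n+1$ only the inward term contributes, yielding $\tfrac{1}{\sqrt p}\cdot \tfrac{1}{2p^{n/2}} = \tfrac{1}{2p^{(n+1)/2}}$. When $k = n-1$ one computes $\tfrac{1}{\sqrt p}\cdot\tfrac{(1-p)+p}{2p^{n/2}} - \tfrac{1}{2p^{(n-1)/2}} = \tfrac{1-p}{2p^{(n+1)/2}}$. For interior values $1 \leq k \leq n-3$ matching the parity of $n+1$, the algebraic identity $(1+p)(1-p) - p(1-p) = 1-p$ produces exactly $\tfrac{1-p}{2p^{(n+1)/2}}$. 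Finally the root equation at $k=0$ is handled using parity: $a_n(1) = 0$ when $n$ is even and $a_{n-1}(0) = 0$ when $n$ is odd, so that $a_{n+1}(0)$ equals $(1-p)/(2p^{(n+1)/2})$ when $n+1$ is even and $0$ when $n+1$ is odd, as required. The uniform bound $|f_n(x)| \lesssim p^{-n/2}$ then follows by inspecting the three possible nonzero values, absorbing the factor $p$ into the implicit constant.

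The computation is purely algebraic; the only real ``obstacle'' is selecting the inductive hypothesis so that both parities of $n$ are captured by a single expression (the lemma as stated deals only with even $n$, but one cannot avoid tracking the odd $f_n$ in the induction). Once that is done, the entire proof hinges on the single identity $(1+p)(1-p) - p(1-p) = 1-p$ together with boundary bookkeeping at $k=n$ and at the root.
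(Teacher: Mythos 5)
Your induction on the three-term recurrence $f_{n+1}=T_pf_n-f_{n-1}$ is correct and is essentially the argument the paper has in mind: the paper simply asserts that the explicit solution of the $p$-adic wave equation ``can be computed inductively'' (deferring a spectral alternative to the graphs paper), and your write-up supplies exactly that computation, with the right radial reduction and the key identity $(1+p)(1-p)-p(1-p)=1-p$. One slip in the root case: when $n$ is odd, $a_{n-1}(0)=\frac{1-p}{2p^{(n-1)/2}}\neq 0$, so the case $n+1$ even is not settled by parity vanishing but by the same one-line computation $\frac{p+1}{\sqrt p}\cdot\frac{1-p}{2p^{n/2}}-\frac{1-p}{2p^{(n-1)/2}}=\frac{1-p}{2p^{(n+1)/2}}$ (again the identity $(p+1)(1-p)-p(1-p)=1-p$, with $p+1$ neighbors replacing the $1+p$ split at interior vertices); as literally stated, your parity claim would give the wrong value $\frac{(p+1)(1-p)}{2p^{(n+1)/2}}$, though your asserted conclusion is the correct one.
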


We now have a description of the $p$-adic wave propagation in both spectral and spacial terms, which we will use to construct our desired radial kernel $K_N$ on the Hecke tree.  It will be convenient to parametrize the $T_p$-eigenvalues as $2\cos(\theta)$, where
\begin{itemize}
\item{The tempered spectrum is parametrized by $\theta\in [0, \pi]$.}
\item{The positive part of the untempered spectrum has $i\theta \in (0,\log{\sqrt{p}})$. }
\item{The negative part of the untempered spectrum has $i\theta+\pi \in (0,\log{\sqrt{p}})$. }
\end{itemize}
We also recall that for any (say, compactly supported) radial kernel $k$ on $\mathcal{T}_{p+1}$, any $T_p$-eigenfunction $\phi$ is also an eigenfunction of convolution with $k$, with eigenvalue--- depending only on the $T_p$-eigenvalue of $\phi$--- given by the {\bf spherical transform} $h_k(\theta)$.

Now, Lemma~\ref{propagation} is a good start towards Lemma~\ref{operator}, since the kernel $P_n[\frac{1}{2}T_p]\delta_0$ satisfies the first two properties of Lemma~\ref{operator}, with $\delta=1/2$.  Unfortunately, the spherical transform of this kernel is $2\cos{n\theta}$, which is bounded by $2$ and cannot satisfy the third condition of Lemma~\ref{operator}.  More crucially, it takes negative values that are as large in absolute value as the maximum.  In order to create the kernel we want, we will have to combine different $P_n[\frac{1}{2}T_p]\delta_0$ waves together, in such a way that they interfere constructively at our chosen spectral interval, to generate a large eigenvalue there, but do not become too negative elsewhere.  To allow this kind of interference, however, we will have to sacrifice somewhat in our bounds for $\delta$.

\begin{proof}[Proof of Lemma~\ref{operator}]
The case where $\theta_0=0$ is a bit simpler, so we consider this case first. Using the Hecke correspondence, we can assign to any ``kernel'' (spherically invariant compactly supported function) on a $p + 1$-regular tree with a marked point an operator on $L ^2 (S^*M)$. We will produce the operator $K _ N$ satisfying the properties of Lemma~\ref{operator} from such a kernel $k _ N$, which in turn will be defined from its spherical transform $h_{k_N}$.
Denoting the Fej\'er kernel of order $L$ by $F_L(\theta)=\frac{1}{L} \left(\frac{\sin (L \theta/2)}{\sin(\theta/2)}\right)^2$, we will set the spherical transform to be
$$h_{k_N}(\theta) = F_L(q \theta)-1$$
for an appropriately chosen $q$.  Now $F_L \gtrsim L$ on $(-\frac{1}{L},\frac{1}{L})$, which contains the spectral parameters for components of $\Phi_j$ if $N \geq L$, and $F_L$ is non-negative, so the third condition of Lemma~\ref{operator} is satisfied on the tempered spectrum, as long as $N>L>\eta^{-1}+1$.  Moreover, we can write
$$F_L(q \theta)-1 = \sum_{j=1}^L \frac{2(L-j)}{L} \cos{(jq \theta)}$$
and observing that $\cos(jq \theta)>\cos(0)$ on the entire untempered spectrum as long as $q$ is even, we see that the third condition holds on the full spectrum.
We also observe that
\begin{eqnarray*}
|k_N(x)| & \leq & \sum_{j=1}^L 2\left|P_{jq}\left[\frac{1}{2}T_p\right]\delta_0 \right| \\
& \lesssim & \sum_{j=1}^L p^{-jq/2}\\
& \lesssim & p^{-q/2}
\end{eqnarray*}
which provides the second condition of Lemma~\ref{operator}, as long as $q \geq 2N \delta$.  Moreover, since each $P_{jq}[\frac{1}{2}T_p]\delta_0$ vanishes outside the ball of radius $Lq$, the first condition is satisfied whenever $Lq \leq N$.  So we may take $L=\lceil \eta^{-1}\rceil+1$, and $q=2 \lfloor N/2L \rfloor$, which yields $\delta = \lfloor q/2N \rfloor \gtrsim \eta$.  The same kernel also works for $\theta=\pi$ and the untempered spectrum.

We must now  consider the case of $\theta_0 \in (0,\pi)$.  The general idea is the same, but in order to maintain positivity on the untempered spectrum, we can't simply shift the Fej\'er kernel to a different approximate eigenvalue.  The solution to this problem is to find a suitable multiple of $\theta_0$, which is sufficiently close to $0$, and  choose our value of $q$ to be divisible by this multiple.  Thus the original Fej\'er kernel evaluated at $q \theta_0$ will still be large, as before, without affecting its positivity on the untempered spectrum.
To guarantee that we can find such a suitable value of $q$, we will invoke Dirichlet's Theorem on Diophantine approximation.  
 We will lose some in our bounds for $\delta$ (as well as in the implied constant)--- whereas our previous argument gave $\delta \gtrsim \eta$, we will have to settle for a more modest $\delta \gtrsim \eta^2$ in order to achieve the necessary flexibility in choosing $q$ .  The following argument is essentially identical to the one appearing
in \cite{meElon}, with some tweaking to the constants to accommodate our quasimodes.

Set $L=\lfloor \eta^{-1}\rfloor$ and $Q=\lceil \frac{1}{8}N \eta \rceil$.  By Dirichlet's Theorem, we can find a positive integer $q \leq Q$ such that $|q \theta_0 \bmod{2 \pi}| < 2 \pi Q^{-1}$.  There exists an even multiple of $q$, say $q' = 2lq$, such that $\frac{1}{64}Q \eta \leq q' \leq 2Q$ --- indeed, if $q \geq \frac{1}{128}Q \eta$, we can simply take $l=1$; otherwise there is a multiple of $q$ between $\frac{1}{128}Q \eta$ and $\frac{1}{64}Q \eta$, so take twice that multiple. Either way, since $N$ is assumed to be large depending on $\eta$, we may assume $Q \eta>64$ which implies that both when $l=1$ and $l>1$ we have that
\begin{equation*}
 2l  < \frac{1}{32}Q \eta \qquad \text{and} \qquad |q'\theta_0 \bmod{2 \pi}| < \frac{1}{16}\pi \eta
\end{equation*}
We now set the spherical transform of $k_N$ to be $h_{k_N}(\theta) = F_{2L}(q'\theta) - 1$, where $F_{2L}$ is the Fej\'er kernel of order $2L$, and $K _ N$ the corresponding operator on $L ^2 (S ^{*} M)$ i.e.
 \begin{equation*}
 K _ N = \sum_ {j = 1} ^ {2 L} \frac {2 L - j }{ L} P _ {j q '} \left (\frac {T _ p }{ 2} \right)
 .\end{equation*}

For any $T _ p$-eigenfunction with eigenvalue $2 \cos\theta$ with $\theta \in [\theta_0-\frac{1}{2}N^{-1}, \theta_0+\frac{1}{2}N^{-1}]$,   we have that \[
|q'(\theta_0 - \theta)|\leq QN^{-1} \leq \eta/8 + 1/N < \eta/6;
\]
 this means that
$$|q'\theta \bmod{2 \pi}| < \frac{1}{16}\pi \eta + \frac{1}{6}\eta < \frac{1}{8}\pi \eta \leq \frac{\pi}{8L}.$$
It follows that
\begin{eqnarray*}
F_{2L}(q'\theta) =  \frac{1}{2L} \frac{\sin^{2}(Lq'\theta)}{\sin^2(q'\theta/2)}
& \geq & \frac{2}{L}\frac{\sin^{2}(Lq'\theta)}{(q'\theta)^2}\\
& \geq & 2L \left(\frac{\sin(Lq'\theta)}{Lq'\theta}\right)^2
\end{eqnarray*}
But since $L \in \mathbb{Z}$, and $q'\theta \bmod{2 \pi}\in \left(-\frac{\pi}{8L}, \frac{\pi}{8L}\right)$, we have $Lq'\theta \bmod{2 \pi} \in [-\frac{\pi}{8}, \frac{\pi}{8}]$, which implies that
$$\left|\frac{\sin(Lq'\theta)}{Lq'\theta}\right| \geq \frac{\sin{\frac{\pi}{4}}}{\frac{\pi}{4}} = \frac{1}{\sqrt{2}}\frac{4}{\pi}$$
whereby
\begin{eqnarray*}
F_{2L}(q'\theta) & \geq & 2L \left(\frac{1}{\sqrt{2}}\frac{4}{\pi}\right)^2\\
& \geq & 2L \frac{8}{\pi^2} > L+1
\end{eqnarray*}
as long as $L \geq 2$ (which follows from the hypothesis $\eta<1/2$).
Therefore the $K _ N$-eigenvalue of this eigenfunction will be $>L+1 \geq \eta^{-1}$.  Moreover, since $F_{2L}$ is positive, the spherical transform of $k_N$ is bounded below by $-1$; note also that on the untempered spectrum of $T _ p$, then $K _ N$-eigenvalue is also $>L+1$ .  It remains to check the first two properties.

Now, by Lemma~\ref{propagation}, we see that the kernel whose spherical transform is $\cos{2j \theta}$--- i.e., the kernel of $P_{2j}(\frac{1}{2}T_p)$--- has sup-norm $\lesssim_p p^{- j}$.  The spherical transform of $k_N$ is a sum of terms of the form $\frac{2L-j}{L}\cos{jq'\theta}$, where $j=1,2 ,\ldots, 2L$ (note that we eliminated the $j=0$ term by subtracting off the constant contribution to $F_{2L}$) and $q'\in 2 \mathbb{Z}$.  Thus
$$ ||k_N||_\infty \lesssim  \sum_{j=1}^L p^{-jq'}
\lesssim_{p} p^{-q'}$$
Then, since
$$q'\geq \frac{1}{64}Q \eta \geq \frac{1}{512}N \eta^2$$
the second condition is satisfied with $\delta = \frac{1}{512} \eta^2$.
Moreover, since each kernel $P_{jq'}(\frac{1}{2}T_p)$ is supported in a ball of radius $jq'\leq 2L \cdot 2Q < N$, the full kernel $k_N$ is supported in a ball of radius $N$.
This concludes the proof of Lemma~\ref{operator}.
\end{proof}

The kernel $k_N$ produced by Lemma~\ref{operator} will be used in section~\ref{Hecke} to establish positive entropy on a.e. ergodic component of quantum limits arising from joint $o(1)$-quasimodes.
It will also be used in section \ref{Hecke recurrence subsection}, though in a less delicate way (in particular, without making use of property \ref{weak positivity property} of Lemma~\ref{operator}).

\subsection{Hecke Recurrence for Quasimodes}\label{Hecke recurrence subsection}

As shown in \cite {Lin} (and implicitly already in \cite{LinHxH}) a quantum limit arising from a sequence of $T _ p$-eigenfunctions is Hecke recurrent. This remains true for $T _ p$-quasi-modes, and in order to streamline the presentation we shall make use of the discussion in section \ref{K_N construction}. 
The recurrence property for quantum limits arising from our joint $o(1)$ quasimodes, follows immediately from the following estimate (see \cite{Lin} for details):

\begin{Lemma}\label{Hecke recurrence by propagation lemma}
Let $\{ \Phi_j \}$ be a sequence of $\omega_j$-quasimodes for $T_p$, with $\omega_j \to 0$, such that the sequence $|\Phi_j|^2 d\vol$ converges weak-* to a measure $\mu$.   Let $x \in \supp \mu \subset S^*M$, and $B$ a small open ball around $e \in G$ (say, of radius less than $1/3$ the injectivity radius of $S^*M$).  Then
$$ \liminf_{j \to \infty} \frac{\sum_{d _ p (x,y)\leq N} \int_{y\overline{B}} |\Phi_j|^2}{\int_{xB} |\Phi_j|^2} \to \infty \qquad \text{as $N \to \infty$} $$
uniformly in $x$ and the radius of $B$.
\end{Lemma}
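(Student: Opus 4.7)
The plan is to adapt the Hecke-recurrence argument for genuine $T_p$-eigenfunctions from \cite{Lin} to the quasimode setting, with the operator $K_N$ of Lemma~\ref{operator} playing the role of an amplifier that makes the argument go through uniformly in the approximate $T_p$-eigenvalue. After passing to a subsequence I may assume the approximate eigenvalues $\lambda_j = 2\cos\theta_j$ converge to some $2\cos\theta_\infty$; for each large $N$ I apply Lemma~\ref{operator} with $\theta_0 = \theta_\infty$ and a small parameter $\eta = \eta(N)$ to obtain $K_N$, which is a polynomial in $T_p$ and so commutes with the $T_p$-spectral decomposition.

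For $j$ sufficiently large that $\omega_j$ is small on the scale $1/N$, every spectral component of $\Phi_j$ sits in the window $|\theta - \theta_\infty| < 1/(2N)$ where Lemma~\ref{operator}\,(4) forces $K_N$-eigenvalue $> \eta^{-1}$; consequently $\|K_N \Phi_j\|_2 \geq \eta^{-1}\|\Phi_j\|_2$, and localization arguments along the lines of \cite{Lin} transfer this to a local estimate
\[
\int_{xB}|\Phi_j|^2 \;\lesssim\; \eta^2 \int_{xB}|K_N \Phi_j|^2 + o_j(1).
\]
On the other hand, writing $K_N = \sum_\alpha c_\alpha T_\alpha$ over Hecke correspondences of tree-depth $\leq N$, with the per-sphere decay $|c_\alpha| \lesssim p^{-d_p(\mathrm{id},\alpha)/2}$ (inherited from Lemma~\ref{propagation} applied termwise to the building blocks $P_{jq'}[T_p/2]\delta_0$ in the construction of $K_N$), a Cauchy--Schwarz expansion yields
\[
\int_{xB}|K_N \Phi_j|^2 \;\lesssim\; \Big(\textstyle\sum_\alpha |c_\alpha|^2\Big) \sum_{d_p(x,y)\leq N} \int_{yB}|\Phi_j|^2,
\]
where $\sum_\alpha |c_\alpha|^2 \lesssim N$ since $|S_{p^r}| \cdot |c_\alpha|^2 \lesssim p^r \cdot p^{-r}$ for $\alpha \in S_{p^r}$.

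Combining the two inequalities and taking $\liminf_{j \to \infty}$ (during which $\int_{xB}|\Phi_j|^2 \to \mu(xB) > 0$ by $x \in \supp\mu$, absorbing the $o_j(1)$) yields a lower bound for the ratio in the statement which, after a suitable joint choice of $\eta$ and $N$ subject to the constraints of Lemma~\ref{operator}, tends to infinity with $N$. Uniformity in $x$ is manifest from the translation-equivariance of the construction, and for $B$ of radius below a third of the injectivity radius the Hecke-translated balls $yB$ in the numerator are pairwise disjoint, giving uniformity in the radius as well. The main obstacle in this plan is the localized spectral inequality $\int_{xB}|\Phi_j|^2 \lesssim \eta^2 \int_{xB}|K_N\Phi_j|^2 + o_j(1)$: globally it is immediate from eigenspace decomposition, but transferring it to a small ball---without pointwise control of $K_N\Phi_j$ in terms of $\Phi_j$---requires exploiting the quasimode property on the $N$-dependent scale together with the tree-locality of $K_N$, following the corresponding step in \cite{Lin}.
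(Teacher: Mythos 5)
Your overall strategy (amplify by a polynomial in $T_p$ built from the propagation kernels, expand over the Hecke ball by Cauchy--Schwarz, use the quasimode property to transfer the spectral gain to $xB$) is the same one the paper uses, but the quantitative accounting as you have written it does not close, and this is a genuine gap. Your two inequalities combine to give the ratio a lower bound of order $1/\bigl(\eta^2\sum_\alpha|c_\alpha|^2\bigr)$, which with your estimate $\sum_\alpha|c_\alpha|^2\lesssim N$ is $1/(\eta^2N)$. But Lemma~\ref{operator} only supplies $K_N$ when $N$ is large depending on $\eta$, and in its construction one needs $N\eta^2\gtrsim 1$ (this is exactly where $\delta=\eta^2/512$ comes from, via $Q=\lceil N\eta/8\rceil$ and $Q\eta>64$); hence $1/(\eta^2N)=O(1)$ and no ``joint choice of $\eta$ and $N$'' makes your bound tend to infinity. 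The culprit is the lossy per-sphere bound $|c_\alpha|\lesssim p^{-d/2}$: in fact the kernel has size about $p^{-jq'/2}$ on the whole block of radii $d\in((j-1)q',jq']$, so the sphere contributions decay geometrically within each block, each of the $2L$ blocks contributes $O(1)$, and $\sum_\alpha|c_\alpha|^2\lesssim L\asymp\eta^{-1}$; then the gain $\eta^{-2}$ against the loss $\eta^{-1}$ gives ratio $\gtrsim\eta^{-1}$, which does go to infinity (e.g.\ take $\eta\asymp N^{-1/2}$). The paper avoids even this bookkeeping: since the whole quasimode is being amplified (not a rough cutoff of it), property \ref{weak positivity property} of Lemma~\ref{operator} is not needed at all, and it uses the bare sum $K=\sum_{l=1}^{L}P_{2ql}(T_p/2)$ with $q\le100L$ chosen by pigeonhole so that every term is $\gtrsim1$ at the approximate eigenvalue; the gain is then $a^2\gtrsim L^2$ against a Cauchy--Schwarz loss of $L$ ($O(1)$ per annulus, $L$ annuli), yielding ratio $\gtrsim L$ over the Hecke ball of radius $O(L^2)$.

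By contrast, the step you single out as the ``main obstacle'' is immediate: since $K_N$ is a polynomial in $T_p$ with coefficients controlled in terms of $N$, the quasimode hypothesis gives the global bound $\|K_N\Phi_j-a\Phi_j\|_2=O_N(\omega_j)$, where $a$ is the spherical transform evaluated at the approximate eigenvalue (so $a>\eta^{-1}$), and a global $L^2$ bound restricts to any subset for free: $a\|\Phi_j\|_{L^2(xB)}\le\|K_N\Phi_j\|_{L^2(xB)}+O_N(\omega_j)$. No tree-locality or delicate localization from \cite{Lin} is needed; this is exactly how the paper localizes, and it also lets you avoid assuming all spectral components of $\Phi_j$ lie in the window. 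Finally, passing to a subsequence with $\theta_j\to\theta_\infty$ only controls the liminf along that subsequence; either argue by contradiction using uniformity in $\theta_0$, or, as the paper does, choose $q$ by pigeonhole for each $j$ separately, which removes the issue entirely.
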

Here, $d _ p (x,y)$ refers to distance in the Hecke tree containing $x$ and $y$, i.e. the smallest $d$ for which $x \in T _ {p ^ d} (y)$; in particular, the sum is finite.
While we have in mind the case where the $\Phi_j$ arise from a sequence of joint $\Delta$ and $T _ p$ quasimodes on $M$ as in Lemma~\ref{asymp to op}, the proof of Lemma~\ref{Hecke recurrence by propagation lemma} only uses the $T_p$-structure, and holds for any sequence of $o(1)$-quasimodes for $T_p$ on $S^*M$.

\begin{proof}[Proof of Lemma~\ref{Hecke recurrence by propagation lemma}]
We will use a simplified version of the construction in Lemma~\ref{operator}. Let $L$ be a large integer. Suppose $\Phi _ j$ is a $\omega _ j$-quasimode for $T _ p$ with approximate eigenvalue $\lambda _ j$ and set $\theta _ j \in [0, 2 \pi]$ by
\begin{equation*}
\theta _ j = \begin{cases}
\cos ^{-1} (\lambda _ j / 2)& \text{if $\lambda _ j \in [-2,2]$} \\
0& \text{if $\lambda _ j > 2$} \\
\pi& \text{if $\lambda _ j < - 2$}
\end{cases}
.\end{equation*}
Find a $q \in \left\{ 1, \dots, 100L \right\}$ so that
\begin{equation}\label{choice of q}
\absolute {q \theta _ j \bmod 2 \pi} \leq \frac \pi{50L},
\end{equation}
 and set
\begin{equation*}
K  = \sum_ {l = 1} ^ {L} P _ {2ql} \left (\frac {T _ p }{ 2} \right)
.\end{equation*}

By Lemma~\ref{propagation} and two consecutive applications of the Cauchy Schwarz inequality
\begin{align*} 
\absolute {K \Phi _ j (x)} &\lesssim  \absolute {\sum_ {l = 1} ^ { L} \sum_ {y: \, 2q(l-1) < d_p(y,x) \leq 2ql} p ^ {-2ql/2} {\Phi _ j (y)} } \\
& \leq \sum_ {l = 1} ^ { L} \left (\#\left\{ y: 2q(l-1) < d_p(y,x) \leq 2ql \right\} \cdot p ^ {-2ql}\cdot \sum_ {\text{\makebox[50pt]{$\smash{y: \, 2q(l-1) < d_p(y,x) \leq 2ql}$}}} \absolute {\Phi _ j (y)} ^2 \right) ^ {1/2} \\
& \lesssim \sum_ {l = 1} ^ { L} \left (\sum_ {\smash{y: \, 2q(l-1) < d_p(y,x) \leq 2ql}} \absolute {\Phi _ j (y)} ^2 \right) ^{1/2}  \leq 
\left (L \sum_ {\text{\smash{\makebox[58pt]{${d_p(y,x) \leq  100 L^2}$}}}} \absolute {\Phi _ j (y)} ^2 \right) ^{1/2}
\hspace{-10pt}.\end{align*}
and similarly for every $g \in B$
\begin{equation*}
\absolute {K \Phi _ j (x g)} \lesssim \left (L \sum_ {\text{\smash{\makebox[58pt]{${d_p(y,x) \leq  100 L^2}$}}}} \absolute {\Phi _ j (yg)} ^2 \right) ^{1/2}
.\end{equation*}
Set $a = \sum_ {l = 1} ^ L P _ {2 q l} (\lambda _ j/2)$ (which we can also write in the tempered case as $\sum_ {l = 1} ^ L \cos (2 q l \theta _ j)$); by considering separately the tempered and untempered cases (using in the former case \eqref{choice of q}) it can be verified that $a  \gtrsim
L$. Since $\Phi _ j$ is an $\omega _ j$-quasimode
\begin{equation*}
\norm {K \Phi _ j - a \Phi _ j} = O_{L} (\omega _ j)
.\end{equation*}
It follows that
\begin{align*}
a ^2  \int_ {x B} \absolute {\Phi _ j} ^2 & \leq \int_ {x B} \absolute {K \Phi _ j} ^2 + O _ {L} (\omega _ j) \vol (B) ^{1/2} \\
& \lesssim L \sum_{d_p(y,x) \leq  100 L^2} \int_ {y B} \absolute {\Phi _ j} ^2 + O _ {L} (\omega _ j) \vol (B) ^{1/2}
.\end{align*}
Since $a ^2 \gtrsim L ^2$, $\omega _ j \to 0$, and $x \in \supp \mu$ (so that $\liminf_ {j \to \infty} \int_ {x B} \absolute {\Phi _ j} ^2 > 0$) we conclude that
\begin{equation*}
\liminf_{j \to \infty} \frac{\sum_{d _ p (x,y)\leq 100L^2} \int_{y \overline{B}} |\Phi_j|^2}{\int_{xB} |\Phi_j|^2} \gtrsim L
.\end{equation*}
\end{proof}

\section{Proof of Theorem~\ref{main}}\label{Hecke}
Let
$$B(\epsilon, \tau) = \left\{ a(t) u ^ - (s _ -) u ^ + (s _ +): t \in (-\tau, \tau), s _ -, s _ + \in (-\epsilon, \epsilon) \right\}
$$
where
\begin{eqnarray*}
u^+(s) & = & \begin{pmatrix} 1 & 0\\ s & 1 \end{pmatrix}\\
u^-(s) & = & \begin{pmatrix} 1 & s\\ 0 & 1 \end{pmatrix}\\
a(t) & = & \begin{pmatrix} e^{t/2} & 0\\ 0 & e^{-t/2} \end{pmatrix}.
\end{eqnarray*}

We recall the setup, which is the same as in \cite{RS}: we begin with a quaternion division algebra
\begin{equation*}
H (\Q) = \left\{ x + i y + j z + i j w: x, y, z, w \in \Q \right\}
\end{equation*}
with $i^2=a, j^2=b, ij=-ji$ and the usual norm $n(x + i y + j z + i j w) = x ^2 - a y ^2 - b z ^2 + ab w ^2$ and trace $tr(x + i y + j z + i j w) = 2x$; that $H (\Q)$ is a division algebra is equivalent to $n (\alpha) \ne 0$ for all nonzero $\alpha \in H (\Q)$; we also assume $a>0$ and then
\begin{equation*}
\iota ( x + i y + j z + i j w) = \begin{pmatrix} x + \sqrt a y& z + \sqrt a w \\
b(z - \sqrt a w) & x - \sqrt a y \end{pmatrix}
\end{equation*}
gives an embedding of $H(\Q)$ to $\operatorname{Mat}_2(F)$, \ $F=\Q (\sqrt a)$, which extends to an isomorphism of rings between $H (\R)$ to $\operatorname{Mat}_2(\R)$. An order $R<H(\Q)$ is a subring containing 1 which as an additive group is of rank 4 and so that $tr(\alpha) \in \Z$ for every $\alpha \in R$. An example of an order is $\tilde R= \mathcal{O} _ F + j \mathcal{O} _ F$ (with $F$ embedded in $H(\Q)$ in the obvious way). We take $R$ to be a maximal order containing $\tilde R$; the assumptions that $R$ is maximal is not important, but makes it easier to write things accurately in classical language. Cf. \cite{Eichler} for more details.

The following estimate can be derived using the techniques of \cite{BorLin}, specifically Lemmas~3.1 and 3.3 there (much more general statements of this type by Silberman and Venkatesh can be found in \cite{Lior-Akshay2}).  We include the proof below for completeness.
\begin{Lemma}\label{disjoint}
For $\tau$ fixed but small enough, there exists a constant $c$ (depending only on $\tau$), such that for any $x,z \in X = \Gamma \backslash \PSL(2,\mathbb{R})$, and any
$\epsilon<cp^{-2N}$, the tube $zB(\epsilon, \tau)\subset X$ contains at most $O(N)$ of the Hecke points $\bigcup_ {j \leq N} T_{p^j} (x)$.
\end{Lemma}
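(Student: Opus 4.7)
The strategy is to reduce the counting of Hecke points in the tube to a counting of lattice points of $\iota(R)$ in a thin region of $\operatorname{Mat}_2(\R)$, and then to exploit the arithmetic structure of $R$.

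First, we pair Hecke points. Suppose $\underline{\alpha_1}x,\underline{\alpha_2}x\in zB(\epsilon,\tau)$ with $n(\alpha_i)=p^{j_i}$, $j_i\leq N$. Writing $\gamma_i\underline{\alpha_i}\tilde x=\tilde z b_i$ with $\gamma_i=\iota(\eta_i)\in\Gamma$ ($\eta_i\in R$, $n(\eta_i)=1$) and $b_i\in B(\epsilon,\tau)$, and using $\alpha_i^{-1}=\bar\alpha_i/p^{j_i}$, a direct computation gives
\[
\iota(\beta)=p^{(j_1+j_2)/2}\,\tilde z(b_1 b_2^{-1})\tilde z^{-1},\qquad \beta:=\eta_1\alpha_1\bar\alpha_2\bar\eta_2\in R, \quad n(\beta)=p^{j_1+j_2}.
\]
Since $B(\epsilon,\tau)B(\epsilon,\tau)^{-1}\subset B(C_0\epsilon,2\tau)$ for an absolute constant $C_0$, the matrix $\iota(\beta)$ is confined to $p^{m/2}\tilde z\overline{B(C_0\epsilon,2\tau)}\tilde z^{-1}$ for $m=j_1+j_2\leq 2N$. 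Choosing $\tau$ (and hence $\epsilon$) small enough that $B(\epsilon,\tau)$ injects into $X$ under left translation by $z$, fixing a single reference Hecke point makes the assignment $\underline{\alpha_2}x\mapsto\beta$ injective, so it suffices to bound the number of $\beta\in R$ satisfying this constraint for some admissible $m$.

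Next, we invoke the hypothesis $\epsilon<cp^{-2N}$ as a rigidity tool. In the coordinates conjugated by $\tilde z$, the off-diagonal entries of $\iota(\beta)$ are of magnitude $O(p^{m/2}\epsilon)\leq O(cp^{-N})$; since they project to a fixed lattice (the image of $\iota(R)$ under the two off-diagonal functionals in $\tilde z$-coordinates), for $c$ small enough (depending on $\tau$ and on the relevant lattice geometry) they are pinned down to a specific sublattice. This forces $\iota(\beta)\in\iota(H(\Q))\cap C_{\operatorname{Mat}_2(\R)}(\tilde z A\tilde z^{-1})$, a commutative $\Q$-subalgebra of $H$ of dimension at most two: either the trivial $\Q$ (forcing $\beta=\pm p^{m/2}$, contributing $O(N)$ elements in total), or a real quadratic subfield $F_z\subset H(\Q)$ conjugate under $\tilde z$ to the standard Cartan. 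In the quadratic case $\beta\in R\cap F_z$ is an element of a rank-two lattice in the real quadratic field $F_z$, of norm $p^m$, whose real embedding lies in the narrow window $[p^{m/2}e^{-\tau},p^{m/2}e^{\tau}]$. By the unit theorem, each $\mathcal O_{F_z}^\times$-orbit meets this window in $O(1+\tau/R_{F_z})$ elements, and the number of orbits is controlled by the class group of $F_z$ and the factorization of $(p^m)$; carrying out this count and summing over admissible $m$ yields the desired $O(N)$ bound, with implicit constants depending only on $\tau$ and on $R$, uniformly in $x$ and $z$.

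The main technical obstacle is the rigidity step: for generic $z$ the algebra $\tilde z A\tilde z^{-1}$ is not defined over $\Q$, so forcing the off-diagonal entries into a discrete sublattice requires either separating the (countable) arithmetic locus of $z$'s or quantitatively bounding the distance from non-arithmetic $\tilde z A\tilde z^{-1}$ to $\iota(H(\Q))$. Estimates of exactly this flavor are carried out in \cite{BorLin} (Lemmas~3.1 and 3.3), and we would adapt them to the present setting.
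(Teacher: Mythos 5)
Your first step (pairing the Hecke points against a fixed one to produce elements $\beta=\bar\alpha_1\alpha_i\in R$ of norm $p^m$, $m\le 2N$, whose $\PSL_2(\R)$-images lie in a thin tube conjugated by a fixed group element) is exactly the right reduction, and it matches the paper. The genuine gap is in your ``rigidity step.'' You claim that because the off-diagonal entries of $\iota(\beta)$ in $\tilde z$-conjugated coordinates are $O(p^{m/2}\epsilon)$ and ``project to a fixed lattice,'' they must vanish, forcing $\iota(\beta)$ into the centralizer of $\tilde z A\tilde z^{-1}$. But the image of the rank-four lattice $\iota(R)$ under the two off-diagonal coordinate functionals in $\tilde z$-coordinates is a countable subgroup of $\R^2$ that is dense for generic $z$, not discrete; smallness therefore does not imply vanishing, and no quantitative ``distance from $\tilde z A\tilde z^{-1}$ to $\iota(H(\Q))$'' can rescue this, since the lemma must hold for every $z$ (one cannot discard the non-arithmetic $z$'s either). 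You flagged this obstacle yourself, so the proposal as it stands does not close.

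The paper's mechanism avoids any $z$-dependent coordinates: integrality is applied not to matrix entries of a single $\beta$ but to $\Q$-rational invariants of the quaternion algebra. For $\beta_i,\beta_j\in R$ the quantities $tr(\beta_i)$, $tr(\beta_i^2)$ and $tr(\beta_i\beta_j-\beta_j\beta_i)$ are rational integers, while the geometric constraint that all $\underline{\beta_i}$ lie in one tube of width $\epsilon<cp^{-2N}$ forces $|tr(\beta_i\beta_j-\beta_j\beta_i)|<1$ and $tr(\beta_i^2)>2n(\beta_i)-1$; hence the commutators vanish \emph{exactly} and $tr(\beta_i)\ge 2n(\beta_i)^{1/2}$. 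Thus all the $\beta_i$ lie in the single real quadratic field $\Q(\beta_1)$ -- a field generated by an arithmetic element, with no reference to $z$ -- which is the statement your centralizer argument was trying to reach. Your final count also needs repair: counting by ``class group and regulator'' orbits of elements of norm $p^m$ and summing over $m\le 2N$ gives $O(N^2)$ in the split case unless you strip the $p$-part; the paper instead associates to each $\beta_i$ the prime-to-$p$ part of the principal ideal $\phi^{-1}(\beta_i)\mathcal{O}_L$ (at most $4N$ possibilities, since only powers of the at most two primes above $p$ occur), and kills the residual unit-times-$p$-power ambiguity geometrically: two elements generating the same such ideal differ by $p^k\theta$ with $\theta$ a unit, and since both lie in the small tube, $\theta=1$ for $\tau$ small, contradicting distinctness. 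With the trace-integrality argument in place of your lattice-projection step, and this bookkeeping, your outline becomes the paper's proof.
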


\begin{proof}
Let $\mathcal{F}$ be a compact fundamental domain for $\Gamma$
and~$\underline x, \underline z \in \mathcal{F}$ points projecting to $x, z$ respectively.
Suppose there exist $k$ distinct Hecke points $y_i \in T_{p^{j_i}}(x)$ with $j_i \leq N$, such that
$y_i \in zB(\epsilon, \tau)$.
By construction of the Hecke correspondence, this implies that there are $\alpha _ i \in R (p ^ {j _ i})$ so that $\underline {\alpha _ i  x} \in \underline z B (\epsilon, \tau)$.
\smallskip

Then, since $B(\epsilon, \tau)^{-1}B(\epsilon, \tau)\subset B(4 \epsilon, 3 \tau)$ for $\epsilon$ sufficiently small 
we have
\begin{equation}
\label{underlined inclusion} \underline{\alpha_1}^{-1}\underline{\alpha _ i}\in \underline xB(4 \epsilon, 3 \tau) \underline x^{-1}\cap \iota\left(\bigcup_{j \leq 2N}R(p^j) \right)
\end{equation}
where $\iota$ is as above our isomorphism of the quaternion division algebra $H$ to $2 \times 2$ matrices over $\mathbb{R}$. Set, for $i = 1, \dots, k - 1$, $\beta _ i = \overline { \alpha _ 1} \alpha _ {i + 1}$; since $\overline { \alpha} = tr(\alpha)-\alpha$ is in $R$ iff $\alpha$ is, we have that $\beta _ i \in R$, $1 \leq n (\beta _ i) \leq p^{2 N}$ and of course $\underline {\beta _ i} = \underline{\alpha_1}^{-1}\underline{\alpha _ i}$.
From \eqref{underlined inclusion} it follows that
\begin{align*}
\absolute {tr(\beta _ i \beta _ j - \beta _ j \beta _ i)} & <\frac1{c_1} n(\beta _ i \beta _ j) ^ {1/2} \epsilon \\
tr(\beta _ i ) &\geq  n(\beta _ i)^{1/2}(2 - \frac\epsilon{c_1})\\
tr(\beta _ i ^2) &\geq  n(\beta _ i)(2 - \frac\epsilon{c_1}) 
\end{align*}
with $c_1$ depending only on $\tau$. As $tr (\beta _ i^2), tr(\beta _ i \beta _ j - \beta _ j \beta _ i) \in \Z$ it follows that if $\epsilon <c_1p^{-2N}$ we have that $\beta _ i \beta _ j = \beta _ j \beta _ i$ and $tr (\beta _ i^2) \geq 2n(\beta _ i)$. It also follows that $tr (\beta _ i) > 0$, hence since $tr (\beta _ i ^2) = tr (\beta _ i) ^2 - 2 n(\beta _ i)$ we find that $tr(\beta _ i) \geq 2 n (\beta _ i) ^ {1/2}$.

By construction $\beta _ i \not\in \Q$, so the only elements in $H (\Q)$ that commute with $\beta _ 1$ are $\Q (\beta _ 1)$. Since $tr(\beta _ i) \geq 2 n (\beta _ i) ^ {1/2}$, the field $\Q (\beta _ 1)$ is isomorphic to a real quadratic number field $L$. Let $\phi:L \to \Q (\beta _ 1) \subset H (\Q)$ be this isomorphism.

Since $R$ is an order, for any $\beta \in \Q (\beta _ 1) \cap R$ (such as $\beta _ i$ for $1 \leq i \leq k - 1$)  we have that $\phi ^{-1} (\beta)$ is in $\mathcal{O} _ L$, the ring of integers in $L$. Hence for any $1 \leq i \leq k - 1$ we obtain a principal ideal $I_i = \phi ^{-1} (\beta _ i) \mathcal{O} _ L$ of norm $n(\beta _ i) \leq p^{2N}$; write $I'_i=p^kI_i$ with $I_i$ not divisible by $p$.

Since $\beta _ i$ are all distinct,
if $I'_i=I'_{j}$ then $\beta _ i = \beta _ j p^k \theta$ with $\theta \in \mathcal{O} _ K ^ {*}$ and $k \in \Z$. Since $\underline {\beta _ i}, \underline {\beta _ j} \in \underline{x}B(4 \epsilon, 3 \tau) \underline x^{-1}$, this would imply that $\underline \theta \in \underline{x}B(c \epsilon, c \tau) \underline x^{-1}$ for an appropriate constant $c$.  But then if $\tau$ was chosen sufficiently small, we must have $\theta =1$ and $\underline {\beta _ i}= \underline {\beta _ j} $, in contradiction to $\underline {\alpha _ j}$ being all distinct.

Thus the map $j \mapsto I ' _ j$ is injective, and since in $\mathcal{O} _ L$ there are at most $4N$ ideals of norm dividing $p^{2N}$  which are not divisible by the principal ideal $p \mathcal{O} _ L$, we conclude that $k \leq 4N$.
\end{proof}

Modifying the constant $c$, one can get the seemingly stronger conclusion that given any $x \in X$, for at most $O (N)$ of the points $y \in \bigcup_ {j \leq N} S _ {p ^ j} (z)$ the intersection $xB(cp^{-2N}, \tau) \cap y B (c p ^ {- 2 N}, \tau) \neq \emptyset$.

\begin{proof}[Proof of Theorem~\ref{main}]
Take $\mathcal{P}$ to be a partition  of $S^*M$ with the property that $\mu (\partial P) = 0$ for every $P \in \mathcal{P}$, and such that $\max_ {P \in \mathcal{P}} \diam P $ is sufficiently small (less than $\frac 1{10}$ of the injectivity radius of $X$ would be sufficient), and consider its refinement under the time one geodesic flow.  Any partition element of the $\lfloor 2N \log{p}\rfloor$-th refinement is contained in a union of $O_c(1)$ ``tubes'' of the form $x_lB(cp^{-2N},\tau)$ for some points $x_l \in S^*M$.  For convenience, we take $c$ sufficiently small, so that the remarks following the proof of Lemma~\ref{disjoint} apply.

Positive entropy on almost every ergodic component is equivalent to the statement:  for any $\eta>0$ there exists $\delta(\eta)>0$ such that, for all $N$ sufficiently large,  any collection of distinct partition elements of the $\lfloor 2N \log{p}\rfloor$-th refinement of $\mathcal{P}$ whose union has total mass $>\eta$, must contain at least $ p^{\delta N}$ partition elements (cf. e.g. \cite{Walters-book}).
Therefore, take a collection $\{ E_1, E_2, \ldots, E_K \}$ of distinct partition elements of the $\lfloor 2N \log{p}\rfloor$-th refinement of $\mathcal{P}$, of cardinality $K$, and set $\mathcal{E}=\bigcup_{k=1}^K E_k$ to be their union; we wish to show that $K \geq p^{\delta N}$ for some $\delta(\eta)$, and all $N$ sufficiently large.

Let $1_{E_k}$ denote the characteristic function of each $E_k$, and similarly $1_{\mathcal{E}} = \sum_{E_k \subset \mathcal{E}} 1_{E_k}$.
To each $E_k$ we associate, as above, $O_c(1)$ tubes $B_{k,l} = x_{k,l}B(\epsilon,\tau)$ whose union contains $E_k$  with $\epsilon = cp^{-2N}$. Let $E_{k,l}$ denote the intersection $E_k \cap x_{k,l}B(\epsilon,\tau)$. 

Now assume that $\mu(\mathcal{E})>\eta$; since we have assumed $\mu (\partial  P) = 0$ for all $P \in \mathcal{P}$ (hence $\mu (\partial E _ k) = 0$ for all $k$), this implies that there exists a $j$ such that $\mu_j(\mathcal{E})=||\Phi_j1_\mathcal{E}||_2^2 > \eta$ as well.   Consider the correlation
$$\langle K_N (\Phi_j1_{\mathcal{E}}),  \Phi_j1_{\mathcal{E}}\rangle$$
where $K_N$ is the operator from Lemma~\ref{operator}.
We will estimate this correlation in two different ways.  
First, we can expand
\begin{equation*}
\langle K_N (\Phi_j1_{\mathcal{E}}),  \Phi_j1_{\mathcal{E}}\rangle_{L^2(S^*M)}
 =  \sum_{k,l} \sum_ {k ', l '} \langle K_N(\Phi_j1_{E _ {k, l}}), \Phi_j1_{E_ {k ', l '}}\rangle_{L^2(E_k)}
.\end{equation*}
Now lift $x_{k,l}$ and  $x_{k ', l '}$ from $X$ to elements $\underline {x _ {k, l}}$ and $ \underline {x _ {k ', l '}}$ in a compact fundamental domain $\mathcal{F}$ for $\Gamma$ in $\PSL (2, \R)$ as in the proof of Lemma~\ref{disjoint}. We also lift the sets $E _ {k, l}$ and $ E _ {k ', l '}$ to subsets $\underline {E _ {k, l}}$ and $ \underline {E _ {k ', l '}}$ of $\PSL (2, \R)$ so that $\underline {E _ {k, l}} \subset \underline {x _ {k, l}} B (\epsilon, \tau)$ and similarly for $k ', l '$. Consider all $\alpha_i \in R(p^j), j \leq N$ for which 
\[\underline {\alpha_i x_{k,l}} B(\epsilon, \tau) \cap \underline {x _ {k',l'}} B(\epsilon, \tau) \neq \emptyset;\]
 say there are $v$ such. By Lemma~\ref{disjoint} (more precisely, by the comments following the proof of this lemma), we have $v = O(N)$. By \ref{small matrix coefficients} of Lemma~\ref{operator}, for any $y \in X$
\begin{equation*}
\left| K_N(\Phi_j1_{E_{k,l}}) (y)\right| \lesssim p^{-N \delta} \sum_{j \leq N} \sum_{z \in S_{p^j}(y)}\left| \Phi _ j (z)\right| 1_{E_{k,l}}(z)
,\end{equation*}
hence (implicitly identifying between functions on $X$ and left $\Gamma$-invariant functions on $\PSL (2, \R)$)
\begin{align*}
\langle K_N(\Phi_j1_{E_{k,l}}), \Phi_j1_{E_{k ',l ' }} \rangle & \lesssim p^{-N \delta} \sum_ {i=1}^v \int_  {\underline {\alpha _ i E_{k,l}} \cap \underline {E_{k',l'}}} \left| \Phi _ j (\underline {\alpha _ i ^{-1} z})\right| \cdot \left| \Phi _ j (\underline z)\right| \,d \underline {z} \\
& \lesssim p^{-N \delta} \sum_ {i=1}^v \norm {\Phi _ j}_{L^2(E_{k,l})} \norm {\Phi _ j}_{L^2(E_{k',l'})} \\
& \lesssim p^{-N \delta}N \norm {\Phi _ j}_{L^2(E_{k,l})} \norm {\Phi _ j}_{L^2(E_{k',l'})}
.\end{align*}
It follows that
\begin{equation}\label{pointwise}
\begin{aligned}
\langle K_N (\Phi_j1_{\mathcal{E}}),  \Phi_j1_{\mathcal{E}}\rangle &\lesssim p^{-N \delta}N \left (\sum_ {k, l} \norm {\Phi _ j}_{L^2(E_{k,l})} \right) ^2 \\
& \lesssim p^{-N \delta}NK \sum_ {k,l} \norm {\Phi _ j}_{L^2(E_{k,l})} ^2 \\
& \leq p^{-N \delta}NK
\end{aligned}
.\end{equation}

On the other hand, we can decompose $\Phi_j1_\mathcal{E}$ spectrally into an orthonormal basis (of $L^2(S^*M)$) of $T_p$ eigenfunctions $\{ \psi_i \}$, which {\em a fortiori} also diagonalize $K_N$.  After applying Lemma~\ref{quasimodes}, and restricting to a subsequence if necessary, we may assume that $\Phi_j \in S_j$, the space spanned by those $\psi_i$ with $T_p$-eigenvalue in $[2 \cos(\theta)-2 \omega_j, 2 \cos(\theta)+2 \omega_j]$, where $2 \cos{\theta}$ is an approximate $T_p$-eigenvalue for all $\Phi_j$.  We denote by $\Pi_{S_j}$ the orthogonal projection to $S_j$, and observe that
$$||\Phi_j1_\mathcal{E}||_2^2 = \left|\left| \Pi_{S_j}(\Phi_j1_\mathcal{E})\right|\right|_2^2 + \sum_{\psi_i\notin S_j} |\langle \Phi_j1_\mathcal{E}, \psi_i \rangle|^2$$
Now, since $\Phi_j \in S_j$ is a unit vector, we have
\begin{eqnarray*}
\left|\left| \Pi_{S_j}(\Phi_j1_\mathcal{E})\right|\right|_2 & = & \max_{\{ \psi \in S_j : ||\psi||_2=1 \}} \langle \Phi_j1_\mathcal{E}, \psi \rangle\\
& \geq& \langle \Phi_j1_\mathcal{E}, \Phi_j \rangle = ||\Phi_j1_\mathcal{E}||_2^2
\end{eqnarray*}
and therefore
\begin{eqnarray*}
\sum_{\psi_i\notin S_j} |\langle \Phi_j1_\mathcal{E}, \psi_i \rangle|^2 & = & ||\Phi_j1_\mathcal{E}||_2^2 - \left|\left| \Pi_{S_j}(\Phi_j1_\mathcal{E})\right|\right|_2^2\\
& \leq & ||\Phi_j1_\mathcal{E}||_2^2 - ||\Phi_j1_\mathcal{E}||_2^4\\
& < & ||\Phi_j1_\mathcal{E}||_2^2 (1-\eta)
\end{eqnarray*}
by the assumption that $||\Phi_j1_\mathcal{E}||_2^2 > \eta$.

Now by Lemma~\ref{operator}, since $\{ \psi_i \}$ diagonalizes $K_N$, and the $K_N$ eigenvalue for each $\psi_i$ is at least $-1$, while the $K_N$ eigenvalue for eigenfunctions in $S_j$ is greater than $\eta^{-1}$, we have
\begin{eqnarray}
\langle K_N (\Phi_j1_{\mathcal{E}}),  \Phi_j1_{\mathcal{E}}\rangle
& = & \sum_{\psi_i}|\langle \Phi_j1_\mathcal{E}, \psi_i \rangle|^2 \langle K_N \psi_i, \psi_i \rangle\nonumber\\
& \geq & \sum_{\psi_i \in S_j} |\langle \Phi_j1_\mathcal{E}, \psi_i \rangle|^2 \langle K_N \psi_i, \psi_i \rangle - \sum_{\psi_i\notin S_j} |\langle \Phi_j1_\mathcal{E}, \psi_i \rangle|^2\nonumber\\
& > & \sum_{\psi_i \in S_j} |\langle \Phi_j1_\mathcal{E}, \psi_i \rangle|^2 \cdot \eta^{-1} - ||\Phi_j1_\mathcal{E}||_2^2 (1-\eta)\nonumber\\
& \geq & \left|\left| \Pi_{S_j}(\Phi_j1_\mathcal{E})\right|\right|_2^2 \cdot \eta^{-1} - ||\Phi_j1_\mathcal{E}||_2^2 (1-\eta)\nonumber\\
& > & ||\Phi_j1_\mathcal{E}||_2^2 (||\Phi_j1_\mathcal{E}||_2^2 \cdot \eta^{-1} - (1-\eta))\nonumber\\
& > & \eta(\eta \cdot \eta^{-1} -1 +\eta) = \eta^2 >0\label{spectral}
\end{eqnarray}
where we have used the estimate above $ \left|\left| \Pi_{S_j}(\Phi_j1_\mathcal{E})\right|\right|_2^2 \geq ||\Phi_j1_\mathcal{E}||_2^4$.

Therefore, combining (\ref{pointwise}) and (\ref{spectral}), we have
$$Np^{-\delta N} K \gtrsim \eta^2$$
and so
$$K \gtrsim \eta^2 N^{-1}p^{\delta N}$$
and there exists $\delta'(\eta)>0$ such that the right hand side is $\geq p^{-\delta'N}$ for all $N$ sufficiently large.

Since this holds for any collection of partition elements of total $\mu$-measure $>\eta$, we conclude that there is at most $\mu$-measure $\eta$ on ergodic components of entropy less than $\delta'\gtrsim \delta>0$.  Taking $\eta \to 0$, we get positive entropy on a.e. ergodic component of $\mu$.
\end{proof}

\section{Irreducible Quotients of $\mathbb{H}\times\mathbb{H}$}\label{HxH}

In this section, we let $M=\Gamma\backslash \mathbb{H}\times\mathbb{H}$, where $\Gamma$ is an irreducible, cocompact, discrete subgroup of $\PSL(2,\mathbb{R})\times \PSL(2,\mathbb{R})$.  Here we do not assume that a Hecke correspondence is available to apply our methods to, but instead we assume that our functions are joint $o(1)$-quasimodes of the two partial Laplacians (in each coordinate).  Since the eigenvalue of the Laplace operator on $M$ is the sum of the eigenvalues of the two partial Laplacians, the semiclassical limit entails at least one of the two partial eigenvalues tending to $\infty$ (perhaps after restricting to a subsequence, if necessary).  Here, we consider the case where one partial eigenvalue tends to $\infty$ while the other remains bounded; without loss of generality, we assume throughout that the approximate eigenvalue of the partial Laplacian $\Delta_1$ in the first coordinate grows, while that of $\Delta_2$ in the second coordinate is bounded.  The discussions of section~\ref{ml lifts} apply in the first coordinate (see Section~\ref{HxH lift}), and we find that any quantum limit measure on $\Gamma\backslash \PSL(2,\mathbb{R})\times\mathbb{H}$ arising from such a sequence is invariant under the diagonal subgroup acting on the first ($\PSL(2,\mathbb{R})$) coordinate.  As discussed in Lemma~\ref{hyperbolic recurrence by propagation lemma}, such a limit measure is also recurrent under translations in the second coordinate.  We then use the action of $\Delta_2$ as a replacement for the Hecke operator in applying the methods of the preceding sections to get our positive entropy result, which therefore implies QUE by \cite{Lin}.

\begin{Theorem}\label{HxH main}
Let $\phi_j$ be a sequence of $L^2$-normalized joint $o(1)$-quasi\-modes for the two partial Laplacians $\Delta_1,\Delta_2$ on $M$, such that the approximate eigenvalues of $\Delta_1$  grow to $\infty$, while those of $\Delta_2$ remain bounded.  Then any weak-* limit point $\mu$ of the microlocal lifts $\mu_j$ has positive entropy on almost every ergodic component.  
\end{Theorem}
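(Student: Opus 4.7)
The plan is to mimic the structure of the proof of Theorem~\ref{main}, using the second hyperbolic factor in $\Gamma\backslash\H\times\H$ as a replacement for the Hecke tree and the partial Laplacian $\Delta_2$ in place of the Hecke operator $T_p$. By Section~\ref{HxH lift}, the microlocal lift $\Phi_j$ on $\Gamma\backslash\PSL(2,\R)\times\H$ already makes $\mu_j=|\Phi_j|^2 d\vol$ asymptotically invariant under the diagonal subgroup $A$ acting on the first coordinate, so it suffices to produce a uniform entropy lower bound on ergodic components of any weak-* limit with respect to this $A$-action.

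The first step is to construct an analog $K_N^{(2)}$ of the kernel $K_N$ of Lemma~\ref{operator}, but built from the non-Euclidean wave equation on the second $\H$-factor. Concretely, I would form a Fej\'er-type linear combination of the wave propagators $\cos(t\sqrt{-\Delta_2-\tfrac14})$ for $t\leq N$, centered at the (bounded) approximate $\Delta_2$-parameter of the $\phi_j$. Choosing the coefficients exactly as in the proof of Lemma~\ref{operator} yields an operator whose spherical (Harish-Chandra) transform exceeds $\eta^{-1}$ in a small window around the target parameter, is bounded below by $-1$ on the full $L^2$-spectrum of $\Delta_2$, has kernel supported within second-coordinate distance $N$, and has matrix coefficients bounded by $O(e^{-\delta N})$. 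The last bound rests on the finite propagation speed of the wave equation together with the standard $L^\infty$ decay $\lesssim e^{-t/2}$ of the hyperbolic wave kernel. Since $\phi_j$ is an $o(1)$-quasimode of $\Delta_2$, $K_N^{(2)}\Phi_j = c_j\Phi_j + o(1)$ with $c_j>\eta^{-1}$, furnishing the analog of property~\ref{weak positivity property}.

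The second step is to invoke Lemma~\ref{HxH disjoint}, the analog of Lemma~\ref{disjoint} in this setting, which asserts that for $\tau$ small and $\epsilon<c e^{-2N}$, the tube $zB(\epsilon,\tau)$ in the first coordinate meets at most $O(N)$ of the $\Gamma$-translates of a given $x$ whose second-coordinate displacement is bounded by $N$. This is where irreducibility of $\Gamma<\PSL(2,\R)\times\PSL(2,\R)$ enters in place of the arithmetic counting of Lemma~\ref{disjoint}, following the Mozes-inspired argument alluded to in the Acknowledgments. With these two ingredients in hand, the proof of Theorem~\ref{main} goes through line by line. Namely, refine a fixed small-diameter partition $\mathcal{P}$ of $\Gamma\backslash\PSL(2,\R)\times\H$ with $\mu(\partial P)=0$ by $\lfloor 2N\rfloor$ iterates of the first-coordinate geodesic flow, suppose a union $\mathcal{E}$ of $K$ atoms has $\mu$-mass $>\eta$, and estimate $\langle K_N^{(2)}(\Phi_j 1_\mathcal{E}),\Phi_j 1_\mathcal{E}\rangle$ in two ways. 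The pointwise/geometric bound, using the matrix-coefficient decay of $K_N^{(2)}$ together with Lemma~\ref{HxH disjoint}, gives $\lesssim e^{-\delta N}NK$, in direct analogy with \eqref{pointwise}. The spectral bound, using amplification of $K_N^{(2)}$ on the $\Delta_2$-window containing (up to Lemma~\ref{quasimodes}) all of $\Phi_j$ and the global lower bound $-1$ elsewhere, gives $\gtrsim \eta^2$, in direct analogy with \eqref{spectral}. Combining these yields $K\gtrsim \eta^2 N^{-1}e^{\delta N}\geq e^{\delta' N}$ for $N$ large, which is exactly the positive-entropy bound on a.e.\ ergodic component.

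The main obstacle is the construction of $K_N^{(2)}$: one must simultaneously amplify a narrow $\Delta_2$-spectral window, maintain the global lower bound $-1$ on the $L^2$-spectrum (so as to absorb the full mass $\|\Phi_j 1_\mathcal{E}\|_2^2$ in the spectral step), and obtain matrix-coefficient decay of size $e^{-\delta N}$. The continuous spectral picture for $\Delta_2$ requires some care in porting the Fej\'er-style interference of Lemma~\ref{operator}, and one must verify the $L^\infty$ bound of the hyperbolic wave propagator uniformly in the base point. A secondary obstacle is verifying Lemma~\ref{HxH disjoint} in this non-arithmetic generality, where the arithmetic counting used in Lemma~\ref{disjoint} is replaced by an argument resting purely on irreducibility of $\Gamma$.
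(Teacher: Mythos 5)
Your proposal is essentially the paper's own proof: the paper likewise builds a Fej\'er-type amplifier from wave-like kernels on the second $\H$-factor centered at the bounded $\Delta_2$-parameter (Lemma~\ref{HxH operator}), proves the tube-intersection bound via irreducibility and commutation (Lemma~\ref{HxH disjoint}), and then repeats the entropy argument of Theorem~\ref{main} verbatim. The one technical point you flag is resolved in the paper by working not with the sharp propagator $\cos(t\sqrt{-\Delta_2-\tfrac14})$ (whose kernel is unbounded on the wavefront) but with the mollified transform pair $h_T(r)=\cos(rT)/\cosh(\pi r/2)$, truncated at radius $\sim 4T$ with a controlled error in the spherical transform, which is exactly what yields the $e^{-T/2}$ sup-norm and effective support your argument requires.
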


\subsection{Our kernel for the hyperbolic plane}\label{HxH kernel}

We recall the Selberg/Harish-Chandra transform (see eg. \cite[Chapter 1.8]{Iwaniec})
\begin{eqnarray}
h(r) & = & \int_{-\infty}^\infty e^{iru}g(u)du\nonumber\\
g(u) & = & 2Q\left(\sinh^2\left(\frac{u}{2}\right)\right)\nonumber\\
k(t) & = & -\frac{1}{\pi} \int_t^\infty \frac{dQ(\omega)}{\sqrt{\omega-t}}\label{Selberg/HC}
\end{eqnarray}
relating a radial kernel $k(x,y) = k(t(x,y)) = k(\sinh^2(dist(x,y)/2))$ with its spherical transform $h(r)$, which gives the eigenvalues under convolution with $k$ for each Laplace eigenfunction $(\Delta + (\frac{1}{4}+r^2))\phi = 0$ of spectral parameter $r$.  Intuitively, the variable $u$ for $g(u)$ represents wave propagation times.

We will start with the Fourier pair
\begin{eqnarray*}
h_T(r) & = & \frac{ \cos(rT)}{\cosh(\pi r/2)}\\
g_T(\xi) & = & \frac{4 \cosh{\xi}\cosh{T}}{\cosh{2\xi} + \cosh{ 2T}}
\end{eqnarray*}
essentially the same as that used in \cite{IwaniecSarnak}, albeit in inverted roles.  Notice that for untempered $r$--- i.e., $ir \in [-\frac{1}{2}, \frac{1}{2}]$--- we have
$$h_T(r) = \frac{\cosh(irT)}{\cos{\pi i r/2}} \gtrsim 1$$
since $\cosh \geq 1$, and the argument of the $\cos$ term is in $[-\frac{\pi}{4}, \frac{\pi}{4}]$, where $\cos$ is uniformly bounded below.

We use this kernel to establish the analogue of the Propagation Lemma~\ref{propagation}:
\begin{Lemma}\label{H kernel}
Let $k_T$ be the radial kernel corresponding to the above function $h_T(r)=\frac{\cos(rT)}{\cosh(\pi r)}$.  Then
\begin{itemize}
\item{$||k_T||_\infty \lesssim e^{-T/2}$}
\item{$k_T$ decays rapidly outside of a ball of radius $4T$; in fact, 
$$\int_{t=\sinh^2(2T)}^\infty |k_T(t)| dt  \lesssim e^{-T}$$}
\end{itemize}
\end{Lemma}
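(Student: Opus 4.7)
\medskip

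\textbf{Proof plan.} The strategy is to invert the Selberg/Harish-Chandra transform explicitly into an Abel-type integral, insert the given $g_T$, and estimate carefully. From the definitions~(\ref{Selberg/HC}), substituting $\omega = \sinh^2(u/2)$ so that $d\omega = \tfrac{1}{2}\sinh u\,du$ and using $\sinh^2(u/2)-\sinh^2(\rho/2) = \tfrac{1}{2}(\cosh u - \cosh\rho)$, one obtains the Abel inversion
\begin{equation*}
k_T\!\left(\sinh^2(\rho/2)\right) \;=\; -\frac{1}{\sqrt{2}\,\pi}\int_{\rho}^{\infty}\frac{g_T'(u)\,du}{\sqrt{\cosh u-\cosh\rho}}.
\end{equation*}
Using $\cosh 2u+\cosh 2T = 2\cosh(u-T)\cosh(u+T)$ and $\cosh(u-T)+\cosh(u+T) = 2\cosh u\cosh T$, the given formula collapses to $g_T(u) = \frac{1}{\cosh(u-T)} + \frac{1}{\cosh(u+T)}$, whence differentiating and using $|\tanh|\le 1$ yields $|g_T'(u)|\le \frac{1}{\cosh(u-T)} + \frac{1}{\cosh(u+T)}\lesssim e^{-|u-T|}$ for $u\geq 0$.

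For the tail bound I restrict to $\rho\geq 4T$, so $u\ge\rho\ge 4T$ forces $|g_T'(u)|\lesssim e^{-(u-T)}$. Writing $u=\rho+s$ and using the asymptotic $\cosh(\rho+s)-\cosh\rho = 2\sinh(\rho+s/2)\sinh(s/2)\asymp e^{\rho}(e^{s}-1)$ valid for $\rho$ large, the Abel integral becomes
\begin{equation*}
|k_T(\sinh^2(\rho/2))|\;\lesssim\;e^{T-3\rho/2}\int_0^{\infty}\frac{e^{-s}\,ds}{\sqrt{e^{s}-1}}\;\lesssim\;e^{T-3\rho/2}.
\end{equation*}
Substituting $t=\sinh^2(\rho/2)$ so $dt=\tfrac{1}{2}\sinh\rho\,d\rho\asymp e^{\rho}d\rho$ for $\rho\geq 4T$, the tail bound follows:
\begin{equation*}
\int_{\sinh^2(2T)}^{\infty}|k_T(t)|\,dt\;\lesssim\;\int_{4T}^{\infty}e^{T-\rho/2}\,d\rho\;\lesssim\;e^{-T}.
\end{equation*}

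For the $L^\infty$ bound I show $|k_T(\sinh^2(\rho/2))|\lesssim e^{-T/2}$ uniformly in $\rho\geq 0$. The case $\rho\geq T$ is already settled by the tail estimate since $e^{T-3\rho/2}\leq e^{-T/2}$. For $\rho\in[0,T]$ I split the Abel integral at $u=T$, so that on $[\rho,T]$ one has $|g_T'(u)|\lesssim e^{-(T-u)}$ and on $[T,\infty)$ one has $|g_T'(u)|\lesssim e^{-(u-T)}$. Each of the two resulting integrals is further split into the region where $u$ is within distance $1$ of the singularity/endpoint (using $\sqrt{\cosh u-\cosh\rho}\gtrsim e^{\rho/2}\sqrt{u-\rho}$ or the analogous bound at $u=T$) and the region bounded away from it (using $\sqrt{\cosh u-\cosh\rho}\asymp e^{u/2}$ when $u\gg\rho$). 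In each subregion the exponential factors balance to give a contribution $\lesssim e^{-T/2}$: for instance $\int_{\rho+1}^{T}e^{u-T}e^{-u/2}\,du\asymp e^{-T/2}$, and the endpoint term is $\int_\rho^{\rho+1}e^{\rho-T}\cdot e^{-\rho/2}(u-\rho)^{-1/2}\,du\asymp e^{\rho/2-T}\leq e^{-T/2}$ since $\rho\leq T$.

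\textbf{The main obstacle} is bookkeeping: one must track how the exponential decay of $g_T'$ away from $u=T$ interacts with the integrable singularity of $(\cosh u-\cosh\rho)^{-1/2}$ at $u=\rho$, and verify that the critical balance gives $e^{-T/2}$ rather than a weaker bound like $e^{-T/4}$. This requires splitting the integral at the right thresholds (namely $u=T$ and $u=\rho+1$) and using the correct asymptotic for $\sqrt{\cosh u-\cosh\rho}$ in each regime; a naive single estimate across the whole range is too lossy.
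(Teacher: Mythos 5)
Your inversion formula, the simplification $g_T(u)=\frac{1}{\cosh(u-T)}+\frac{1}{\cosh(u+T)}$ (up to a harmless constant), and the entire tail estimate are correct, and in spirit this is the paper's own argument: the paper performs the same Abel inversion, but in the variable $\omega=\sinh^2(u/2)$, estimating $Q'(\omega)$ where you estimate $g_T'(u)$. The genuine problem is in your $L^\infty$ bound, in the regime of small $\rho$. The inequality $\sqrt{\cosh u-\cosh\rho}\gtrsim e^{\rho/2}\sqrt{u-\rho}$, which you invoke on the near-endpoint interval $[\rho,\rho+1]$, is false when $\rho\lesssim 1$: since $\cosh u-\cosh\rho=2\sinh\frac{u+\rho}{2}\sinh\frac{u-\rho}{2}$, for $\rho$ near $0$ this vanishes like $(u-\rho)(u+\rho)$, i.e.\ quadratically rather than linearly in $u-\rho$. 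With only your bound $|g_T'(u)|\lesssim e^{-|u-T|}$, the near-endpoint majorant at $\rho=0$ is $\asymp e^{-T}\int_0^1 \frac{du}{\sinh(u/2)}$, which diverges, and for small $\rho>0$ it is of size $e^{-T}\log(1/\rho)$, which is not $O(e^{-T/2})$ uniformly in $\rho$. So as written the supremum bound is not established for $t$ near $0$, and the lemma claims a bound uniform over all $t$.

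The missing ingredient is the vanishing of $g_T'$ at the origin: $g_T$ is even, so $g_T'(0)=0$, and writing $g_T'(u)$ as an increment of $x\mapsto -\tanh(x)/\cosh(x)$ over $[T-u,T+u]$ gives $|g_T'(u)|\lesssim \min(u,1)\,e^{-(T-u)}$ for $0\le u\le T$; this restores integrability at $u=\rho$ and makes the near-endpoint contribution $\lesssim e^{-T}$ when $\rho\le 1$. Equivalently, you can sidestep the issue by estimating in the variable $\omega=\sinh^2(u/2)$ as the paper does: there the singularity is genuinely $(\omega-t)^{-1/2}$ for every $t\ge 0$, and the two bounds $|Q'(\omega)|\lesssim \cosh T\,(\cosh 2T)^{-1}$ for $\omega\lesssim\cosh T$ and $|Q'(\omega)|\lesssim \omega^{-2}\cosh T$ for $\omega\gtrsim\cosh T$ give $\|k_T\|_\infty\lesssim(\cosh T)^{-1/2}$ with no special treatment of small $t$ (the factor $\sinh u$ relating $g_T'$ to $Q'$ is exactly what encodes the cancellation you are missing). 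For $\rho\ge 1$ your case analysis and your tail computation are fine, so the gap is local and easily repaired, but it does need repairing.
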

This Lemma is analogous to Lemma~\ref{propagation}, and can be understood in terms of propagation of the hyperbolic wave equation.  Our proof here, estimating the explicit transform directly, is morally the same as that of Lemma~\ref{propagation}, though perhaps more direct at the expense of transparency.

\begin{proof}
We have
\begin{eqnarray*}
2 Q(\sinh^2(\xi/2)) & = &  4 \frac{2\cosh{{\xi}}\cosh{{T}}}{2\cosh{2\xi} + 2\cosh{ 2T}}\\
Q(\omega) & = &  2\frac{(4\omega+2)\cosh{T}}{((4\omega+2)^2-2)+2\cosh{2T}}
\end{eqnarray*}
which implies that
\begin{eqnarray}
Q'(\omega) & = & (8\cosh{T})\frac{(4\omega+2)^2-2+2\cosh{2T} - 2(4\omega+2)^2}{[(4\omega+2)^2-2+2\cosh{2T}]^2}\nonumber\\
& = & (8\cosh{T}) \frac{2\cosh{2T}-2 -(4\omega+2)^2}{[2\cosh{2T}-2 + (4\omega+2)^2]^2}\nonumber\\
& \lesssim_C & \left\{ \begin{array}{ccc} \omega^{-2}\cosh{T} & \quad \quad & (4\omega+2)^2\geq \frac{1}{C}(2\cosh{2T}-2)\\ \cosh{T}(\cosh{2T})^{-1} & \quad\quad & (4\omega+2)^2\leq C(2\cosh{2T}-2) \end{array} \right.\label{Q'}
\end{eqnarray}

In particular, if $(4t+2)^2\geq 2\cosh(2T)-2$, then 
\begin{eqnarray*}
|\pi k_T(t)| & = & \left|\int_0^\infty v^{-1/2}Q'(v+t) dv\right|\\
& \lesssim & \cosh{T} \int_0^\infty v^{-1/2}(v+t)^{-2} dv\\
& \lesssim & \cosh{T}\left(t^{-2}\int_0^t v^{-1/2}dv + t^{-1}\int_t^\infty v^{-3/2}dv\right)\\
& \lesssim & \cosh{T} (t^{-2}t^{1/2} + t^{-1}t^{-1/2} ) \lesssim \cosh{T}\cdot t^{-3/2}
\end{eqnarray*}
and therefore, since $t\geq \sinh^2(2T)$ implies $(4t+2)^2\geq 2\cosh(2T)-2$, we have by (\ref{Q'})
\begin{eqnarray*}
\int_{t=\sinh^2(2T)}^\infty |k_T(t)| dt & \lesssim & \cosh{T}\int_{t=\sinh^2(2T)}^\infty t^{-3/2}dt\\
& \lesssim & \cosh{T} \cdot (\sinh(2T))^{-1}\\
& \lesssim & e^{-T}
\end{eqnarray*}
since $ \cosh{T}\lesssim e^T$ and $\sinh(2T)\gtrsim e^{2T}$ for $T\geq 1$, say.

For the $||k_T||_\infty$ statement, we choose an appropriate value of $C$ simplifying (\ref{Q'}) to
$$|Q'(\omega)| \lesssim  \left\{ \begin{array}{ccc} \omega^{-2}\cosh{T} & \quad \quad & \omega\geq \cosh{T} \\ \cosh{T}(\cosh{2T})^{-1} & \quad\quad & \omega \leq 2\cosh{T} \end{array} \right.$$
Apply this first in the case $t\geq \cosh{T}$, to get as in the previous estimate
\begin{eqnarray}
|\pi k_T(t)| & \lesssim & \cosh{T}(t^{-3/2})\label{large t}\\
& \lesssim & \cosh{T}\cdot (\cosh{T}^{-3/2}) \lesssim e^{-T/2}\nonumber
\end{eqnarray}
Finally, if $t\leq \cosh{T}$, we estimate 
\begin{eqnarray}
|\pi k_T(t)| & \lesssim & \int_0^{\cosh{T}} v^{-1/2} \cosh{T}(\cosh{2T})^{-1}dv + \int_{\cosh{T}}^\infty v^{-1/2}(v+t)^{-2}\cosh{T}dv\nonumber\\
& \lesssim & (\cosh{T})^{3/2}(\cosh{2T}^{-1}) + \cosh{T}\int_{\cosh{T}}^\infty v^{-5/2}dv\nonumber\\
& \lesssim & (\cosh{T})^{-1/2} + (\cosh{T})^{-1/2} \nonumber\\
& & \label{small t}
\end{eqnarray}
by estimating $|Q'(v+t)|$ separately for $(v+t)\leq 2\cosh{T}$ and $(v+t)\geq \cosh{T}$.  
\end{proof}

We now use this family of kernels in a construction analogous to Lemma~\ref{operator}:
\begin{Lemma}\label{HxH operator}
Let $0<\eta<\frac{1}{2}$, and $r_j^{(2)}$ an approximate spectral parameter.  For any sufficiently large $N\in\mathbb{N}$ (depending on $\eta$), there exists an operator $K_N$ on $\Gamma\backslash SL(2,\mathbb{R})\times\mathbb{H}$ satisfying:
\begin{enumerate}
\item{$K_N$ is given by convolution with a kernel $k_N$ in the second coordinate, supported in the ball of radius $2N$; i.e.
$$K_N(f)(x,z) = \int_{d(w,z)\leq 2N} f(x,w)k_N(w)dw$$ }
\item{We have the estimate
$||k_N||_\infty \lesssim e^{-\delta N}$ for some $\delta(\eta)>0$.  In fact, we can choose $\delta = \frac{1}{2}\eta^2$.}
\item{The spherical transform $h_{k_N}$ is uniformly bounded below, and $h_{k_N}(r)\gtrsim \eta^{-1}$ for 
all $|r-r_j^{(2)}| < 1/2N$.}
\end{enumerate}
\end{Lemma}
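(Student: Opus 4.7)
The construction mirrors the analogous step in the proof of Lemma~\ref{operator}: we build $K_N$ via its spherical transform by combining the elementary kernels $k_T$ of Lemma~\ref{H kernel} in a Fej\'er-like arrangement tuned to the target parameter $r_j^{(2)}$. Since $r_j^{(2)}$ is a bounded real number (rather than a phase on the torus as $\theta_0$ was in Lemma~\ref{operator}), no Dirichlet-type Diophantine approximation is needed: we can center the Fej\'er bump exactly at $r_j^{(2)}$ by choosing the coefficients appropriately. Boundedness of $r_j^{(2)}$ ensures that the factor $1/\cosh(\pi r/2)$ appearing in all the $h_T$'s contributes only a harmless bounded multiplicative constant on the spectral window of interest.

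Concretely, fix $L=\lceil \eta^{-1}\rceil$ and $T=\lceil \eta^2 N\rceil$, so that $4LT\leq 2N$ (using $\eta<1/2$ and $N$ large), and set
$$h_{k_N}(r) \;=\; 4\sum_{\ell=1}^{L-1}\left(1-\frac{\ell}{L}\right)\cos(\ell T r_j^{(2)})\, h_{\ell T}(r),$$
so that $k_N=4\sum_\ell (1-\ell/L)\cos(\ell T r_j^{(2)})\,k_{\ell T}$ is a valid kernel by linearity of the Selberg/Harish-Chandra transform. Using $F_L(\theta)=1+2\sum_{\ell=1}^{L-1}(1-\ell/L)\cos(\ell\theta)$ together with $2\cos A\cos B=\cos(A-B)+\cos(A+B)$, this rewrites as
$$h_{k_N}(r) \;=\; \frac{F_L\bigl(T(r-r_j^{(2)})\bigr)+F_L\bigl(T(r+r_j^{(2)})\bigr)-2}{\cosh(\pi r/2)}.$$
The nonnegativity of $F_L$ gives at once the uniform lower bound $h_{k_N}(r)\geq -2/\cosh(\pi r/2)\geq -2$. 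For $|r-r_j^{(2)}|\leq 1/(2N)$ one has $T|r-r_j^{(2)}|\leq 1/(2L)$, and the elementary bound $F_L(\theta)\gtrsim L$ for $|\theta|\lesssim 1/L$, combined with the boundedness of $r$ on this window, gives $h_{k_N}(r)\gtrsim L\gtrsim \eta^{-1}$. For the sup-norm estimate, Lemma~\ref{H kernel} gives $\norm{k_{\ell T}}_\infty\lesssim e^{-\ell T/2}$ and summing geometrically
$$\norm{k_N}_\infty \;\lesssim\; \sum_{\ell=1}^{L-1}e^{-\ell T/2}\;\lesssim\; e^{-T/2}\;\leq\; e^{-\eta^2 N/2},$$
as claimed.

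The only genuine subtlety is the support condition, since the hyperbolic kernels $k_{\ell T}$ produced by the Selberg/Harish-Chandra transform are not compactly supported, only rapidly decaying past radius $4\ell T\leq 2N$, in contrast to the strictly compactly supported $p$-adic wave kernels $P_{2j}[\tfrac12 T_p]\delta_0$ used in Lemma~\ref{operator}. We resolve this by defining $K_N$ via the truncated kernel $k_N\cdot\mathbf{1}_{B(2N)}$; the tail bound $\int_{t\geq \sinh^2(2\ell T)}|k_{\ell T}(t)|\,dt\lesssim e^{-\ell T}$ from Lemma~\ref{H kernel} shows that this truncation perturbs the spherical transform $h_{k_N}$ uniformly by $O(Le^{-T})=O(\eta^{-1}e^{-\eta^2 N})$, which is negligible compared both to the $\eta^{-1}$ gain at the spectral window and to the uniform lower bound. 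Absorbing the truncation error into the already-established bounds is essentially the only place the proof must depart from a straightforward transcription of the Fej\'er-kernel calculation in Lemma~\ref{operator} to the archimedean setting.
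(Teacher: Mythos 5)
Your sup-norm bound, your handling of the support by truncation, and your lower bound on the tempered window are all fine and close to what the paper does; the genuine gap is in the ``uniformly bounded below'' part of property (3), on the \emph{untempered} spectrum. The spectral decomposition (in the second variable) of the functions to which $K_N$ is applied in the entropy argument contains components with $r\in i[-\tfrac12,\tfrac12]$ --- e.g.\ the constant function, $r=i/2$, eigenvalue $0$ of $\Delta_2$, plus possibly other small eigenvalues --- and the analogue of (\ref{spectral}) needs the $K_N$-eigenvalue to be $\geq -O(1)$ on \emph{all} such components. For $r=is$ your transform is $h_{k_N}(is)=\cos(\pi s/2)^{-1}\sum_{\ell=1}^{L-1}4(1-\ell/L)\cos(\ell T r_j^{(2)})\cosh(\ell T s)$, and since you chose $T=\lceil \eta^2 N\rceil$ with no reference to $r_j^{(2)}$, the signs of the modulating coefficients $\cos(\ell T r_j^{(2)})$ are uncontrolled; if, say, $\cos((L-1)T r_j^{(2)})<0$ is of size comparable to $1$, then $h_{k_N}(i/2)\lesssim -L^{-1}e^{(L-1)T/2}$, exponentially negative in $N$, so $K_N$ has no uniform lower bound and the argument of Section~\ref{Hecke} breaks. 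Equivalently: $F_L\geq 0$ only for real arguments, so your identity $h_{k_N}(r)=\bigl(F_L(T(r-r_j^{(2)}))+F_L(T(r+r_j^{(2)}))-2\bigr)/\cosh(\pi r/2)$ yields the bound $\geq -2$ only on the tempered spectrum. (A smaller point: if $r_j^{(2)}$ is itself untempered, your construction also does not give the $\gtrsim\eta^{-1}$ bound on its window, whereas the paper's does automatically.)

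This is exactly why the paper still invokes Dirichlet's theorem here even though $r_j^{(2)}$ is a bounded real parameter, and it is the same issue flagged explicitly in the proof of Lemma~\ref{operator}: one cannot simply shift (modulate) the Fej\'er kernel to the target eigenvalue without losing positivity on the untempered spectrum. The paper keeps the combination $\sum_j \frac{2L-j}{L}\tilde h_{2jT}$ with \emph{positive} coefficients, so that on the untempered spectrum every term $\cos(2jTr)=\cosh(2jT\,|\mathrm{Im}\,r|)\geq 1$ and the transform is automatically $\gtrsim L$ there, and instead uses Dirichlet's theorem to choose the even integer $q'=2T$ with $N\eta^2\lesssim q'\lesssim N\eta$ so that $q'r_j^{(2)}$ is small modulo $2\pi$, which forces $F_{2L}(q'r)-1\gtrsim L$ on the window $|r-r_j^{(2)}|<1/2N$ while preserving the lower bound everywhere. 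Reinstating that choice of $T$ (and dropping the modulation) repairs your proof; the remaining estimates --- truncation of the tails, the geometric sum for $\|k_N\|_\infty$, and the window bound --- then go through essentially as you wrote them.
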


\begin{proof}

We first use the fact that $\int |k_T(t)| dt$ is small outside the ball of radius ${4T}$ to get a modified kernel $\tilde{k}_T$ cut off to be supported inside this ball.  Namely, define
$$\tilde{k}_T(t) =\left\{ 	\begin{array}{ccc} k_T(t) 	& \quad\quad & t\leq \sinh^2(2T)\\ 0 & \quad\quad & t > \sinh^2(2T)	\end{array} \right.$$
and by integrating against a spherical eigenfunction (see eg. \cite[1.7]{Iwaniec}), we see that $k_T - \tilde{k}_T$ has spherical transform bounded by
$$\int_{w\geq 4T} |k(\sinh^2(w))|dw \lesssim e^{-T}$$
since spherical eigenfunctions decay away from the origin.  Therefore $\tilde{k}_T$ has spherical transform $\tilde{h}_T$ satisfying
$$\left|\tilde{h}_T(r) - \frac{\cos(rT)}{\cosh{\pi r/2}} \right| \lesssim e^{-T}$$
We now set $L:=\lceil \eta^{-1}\rceil$, and take the linear combination 
$$h_{L,T} (r):= \sum_{j=1}^{2L} \frac{2L-j}{L} \tilde{h}_{2jT}(r) $$
which satisfies 
$$\left|	h_{L,T}(r) - \frac{F_{2L}(2Tr) - 1}{\cosh \pi r/2}	\right| \lesssim \sum_{j=1}^{2L} e^{-2jT} < 1$$
 so that $h_{L,T}$ is everywhere uniformly bounded below, and $h_{L,T}(r)>L$ for untempered $r\in i\mathbb{R}$, in analogy with section~\ref{propagator}.

Of course, if our approximate spectral parameter $r^{(2)}_j$ is tempered (i.e., $r^{(2)}_j \in\mathbb{R}$), then we would like to pick $T$ in such a way that $F_{2L}(2Tr)$ is large at $r^{(2)}_j$, and we invoke Dirichlet's Theorem once again to find a suitable such $T$.

  Suppose $r^{(2)}_j\in\mathbb{R}$ is tempered, and set $\theta_0 = r^{(2)}_j \bmod{2\pi}$.  Take $N$ from the hypotheses of the Lemma, which represents the time to which we will allow propagation, and let  $Q=\lceil \frac{1}{8}N\eta\rceil$.  Apply the argument from the proof of Lemma~\ref{operator} to find $q'\in 2\mathbb{Z}$ such that $N\eta^2 \lesssim q' \leq 2Q$, and satisfying $F_{2L}(q'\theta) >L+1$ for all  $\theta = r\bmod{2\pi}$ where $|r- r^{(2)}_j| <1/2N$.    Now set $2T=q'$, and we have 
$$h_{L,T}(r) > \frac{F_{2L}(q'r)-1}{\cosh{\pi r/2}} - O(1) \gtrsim_{r_j^{(2)}}  L$$
for all $|r- r^{(2)}_j| <1/2N$.  Note that our estimate depends on $r^{(2)}_j$, but we have assumed that $r_j^{(2)}$ remains bounded, so that $h_{L,T}(r) \gtrsim L$, with the implied constant depending on our uniform bound for $r_j^{(2)}$.

Finally, we define $k_{L,T}:= \sum_{j=1}^{2L} \frac{2L-j}{L}\tilde{k}_{2jT}$ to be the kernel corresponding to $h_{L,T}$, and note that since $||\tilde{k}_{2jT}||_\infty = ||k_{2jT}||_\infty$, we have
$$||k_{L,T}||_\infty \leq \sum_{j=1}^{2L}\left|\left| k_{2jT}\right|\right|_\infty \lesssim \sum_{j=1}^{2L} e^{-(2jT)/2} \lesssim e^{-2T}$$
decays exponentially in $T$.

In conclusion, we have constructed a radial kernel $k_{L,T}$ with the following properties:
\begin{itemize}
\item{$k_{L,T}$ is supported in the ball of radius $4(2L\cdot 2T)<2N$.}
\item{$||k_{L,T}||_\infty \lesssim e^{-T} < e^{-\frac{1}{2}N\eta^2}$ for $N$ sufficiently large.}
\item{The spherical transform of $k_{L,T}$ is uniformly bounded below, and is $\gtrsim \eta^{-1}$ at all spectral components in $[r_j^{(2)}-\frac{1}{2N}, r_j^{(2)}+\frac{1}{2N}]$.}
\end{itemize}
as required.

\end{proof}

Armed with this kernel of Lemma~\ref{HxH operator}, the proof of Theorem~\ref{HxH main} proceeds exactly as in section~\ref{Hecke}; we are missing only the analogue of Lemma~\ref{disjoint}:
\begin{Lemma}\label{HxH disjoint}
There are $c,\kappa>0$ so that for  any $(x,z)$ in a compact fundamental domain $\mathcal{F} <  \PSL(2,\mathbb{R})\times \PSL(2,\mathbb{R})$  with respect to the action of $\Gamma$ on the left, there are at most $cN$ distinct $\gamma \in \Gamma$ such that there exist $g_1, g_2 \in \PSL(2,\mathbb{R})$ satisfying
\begin{equation}\label{gamma condition}
(x, zg_1) \in \gamma (xB(e^{-\kappa N}, \tau), zg_2) \qquad \log\|g_1\|, \log\|g_2\| <N
\end{equation}
\end{Lemma}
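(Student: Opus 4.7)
The plan is to adapt the strategy of Lemma~\ref{disjoint} to the $\H\times\H$ setting, using Margulis's arithmeticity theorem in place of the explicit Hecke correspondence. After normalization, I would show that any two admissible $\gamma,\gamma'$ must commute, and then count the resulting abelian family using the structure of maximal tori. First, replacing $\Gamma$ by $(x,z)^{-1}\Gamma(x,z)$, I may assume $x=z=e$, so \eqref{gamma condition} becomes: count $\gamma\in\Gamma$ with $\gamma_1\in B(e^{-\kappa N},\tau)^{-1}$ and $\log\|\gamma_2\|\leq 2N$ (since, as $g_1,g_2$ range over elements with $\log\|g_i\|<N$, the products $g_1g_2^{-1}$ run through elements of norm at most $e^{2N}$, and conversely any such element admits a $KAK$-style decomposition of this form). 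By Margulis's arithmeticity theorem, $\Gamma$ is commensurable with the norm-one units of an order in a quaternion algebra $H$ over a totally real quadratic field $F$ that splits at both real places; the two factor projections of $\Gamma$ correspond to the two real embeddings $\sigma_1,\sigma_2$ of $F$, so that for $\gamma\in\Gamma$ the matrix $\gamma_2$ is essentially the Galois conjugate of $\gamma_1$.

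Given two admissible $\gamma,\gamma'$, set $\delta=[\gamma,\gamma']$. Writing $\gamma_1=a(t)u^-(s_-)u^+(s_+)$ and $\gamma_1'=a(t')u^-(s_-')u^+(s_+')$ with $|t|,|t'|<\tau$ and $|s_\pm|,|s_\pm'|<e^{-\kappa N}$, direct expansion shows $\delta_1=I+O(e^{-\kappa N})$, since $a(t)$ and $a(t')$ commute and the transverse perturbations contribute only at order $e^{-\kappa N}$. Meanwhile $\|\delta_2\|\leq e^{O(N)}$. The entries of $\delta_1-I$ lie in $F$ modulo a fixed bounded denominator, and their Galois conjugates are the corresponding entries of $\delta_2-I$, which are bounded by $e^{O(N)}$. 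The Liouville estimate $|\alpha|\cdot|\sigma(\alpha)|\geq c$ for nonzero $\alpha$ in a fixed fractional ideal of $\mathcal{O}_F$ then forces every entry of $\delta_1-I$ to vanish as soon as $\kappa$ is chosen larger than the implicit constant in the $e^{O(N)}$ bound on $\delta_2$. Hence $\delta=I$, so $\gamma$ and $\gamma'$ commute.

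The admissible $\gamma$'s therefore all lie in a single maximal torus $L$ of $\PSL(2,\R)\times\PSL(2,\R)$, and the constraint on the first factor forces the first-factor projection of $L$ to be a conjugate of the split subgroup $A$. By Dirichlet's units theorem applied to $F$ (or to the relevant quaternion order), $\Gamma\cap L$ is of rank one, generated by some $\gamma_0$ whose first-factor $A$-parameter $t_0$ is bounded below by the systole of $M$. Each admissible $\gamma$ is then a power $\gamma_0^n$ with $|nt_0|<\tau$, giving at most $O(1)$ values of $n$, which is well within the $cN$ bound. The main obstacle is the commutator-vanishing step: one must unpack the quaternion algebra embedding carefully to locate entries of $\delta$ and verify the Liouville estimate with universal constants, and one must handle edge cases in which $L$ contains a compact or unipotent factor or $\gamma_0$ is torsion--- in those cases the role of the two coordinates is reversed, and one uses the second-factor norm bound $\log\|\gamma_{0,2}^n\|\leq 2N$ to conclude $|n|\leq O(N)$ instead.
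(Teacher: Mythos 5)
Your overall strategy matches the paper's: first show that any two admissible $\gamma$'s commute via a Diophantine lower bound coming from Margulis arithmeticity, then count the resulting abelian family. But the paper abstracts these two inputs into modular pieces, while you work directly inside the quaternion algebra. Specifically, the paper records the arithmeticity input as the single quantitative-irreducibility inequality \eqref{property of irreducible lattices}, $\|g_1-e\|\geq\kappa\|g_2\|^{-C}$ for nontrivial $(g_1,g_2)\in\Gamma$, and deduces the commutator vanishes by comparing $\|\pi_1[\gamma,\gamma']-e\|=O(e^{-\kappa N})$ against $\|\pi_2[\gamma,\gamma']\|=O(e^{8N})$ --- essentially your commutator step. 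For the counting, however, the paper proves Proposition~\ref{abelian subgroups are small} --- an $O(\log N)$ bound on the number of elements of any abelian subgroup $H<\Gamma$ with $\|h'\|\leq 100$, $\|h''\|\leq N$ --- using nothing arithmetic: only the compactness-derived uniform separation $\inf_g\|g\gamma g^{-1}-e\|>\delta$ for $\gamma\in\Gamma\setminus\{e\}$, conjugation of $H$ into $K\times A$, $A\times K$, or $A\times A$, and pigeonhole. This handles all Cartan types at once and avoids any appeal to the explicit quaternionic realization or to Dirichlet's unit theorem.

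Your counting step has a gap. The claim that the first-factor $A$-parameter $t_0$ of the generator $\gamma_0$ is bounded below by the systole of $M$ is false: for an irreducible lattice, $\pi_1(\Gamma)$ is dense in $\PSL(2,\R)$, so there exist $\gamma_0\in\Gamma$ with $\pi_1(\gamma_0)$ arbitrarily close to $e$ --- at the cost of $\pi_2(\gamma_0)$ being large, which is precisely the trade-off that \eqref{property of irreducible lattices} quantifies. The systole controls the joint displacement in $\H\times\H$, not the first-factor displacement alone. Consequently your ``main'' $O(1)$ count is unjustified, and the second-factor argument you relegated to torsion/compact-factor edge cases --- using $\log\|\gamma_{0,2}^n\|\lesssim N$ to get $|n|\lesssim N$ --- is actually needed generically whenever $t_0$ is small. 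Carried through carefully it does close the gap: if the second-factor displacement $t_0'\gtrsim 1$ then $|n|\lesssim N$ directly; if $t_0'<1$ then the quantitative irreducibility forces $t_0\gtrsim 1$ and hence $|n|=O(1)$. Either way one gets $O(N)$, but the case split must be on the size of $t_0$, not on the Cartan type of $L$.
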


Recall that $\Gamma$ is an irreducible, cocompact lattice in $\PSL (2, \R) \times \PSL (2, \R)$. In this context, irreducibility is equivalent to $\Gamma$ intersecting the group $\left\{ e \right\} \times \PSL (2, \R)$ trivially \cite{Raghunathan}.
A complete classification of such (up to commensurability) can be obtained from Margulis' Arithmeticity Theorem \cite{Margulis-book}. This classification (and Liouville type bounds on the approximation of algebraic numbers) in particular implies a more quantitative form of the triviality of $\Gamma \cap \left\{ e \right\} \times \PSL (2, \R)$, namely that there are $C, \kappa>0$ so that
\begin{equation} \label{property of irreducible lattices}
\norm {g_1 - e} \geq \kappa \norm {g_2}^{-C} \qquad \text {for any $(g_1,g_2) \in \Gamma \setminus \left\{ e \right\}$}
.\end{equation}
We shall also make use of the following estimate:

\begin{Proposition}\label{abelian subgroups are small}
Let $\Gamma$ be an irreducible cocompact lattice in $\PSL (2, \R) \times \PSL (2, \R)$. Then
there is a $C _ 1$ so that for every \emph{abelian} subgroup $H < \Gamma$
and $N>1$
\begin{equation*}
\absolute {\left\{ h =(h', h'') \in H: \norm {h'}\leq 100, \norm {h ''}\leq N \right\}} \leq C _ 1 \log N 
.\end{equation*}
\end{Proposition}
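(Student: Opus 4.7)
The plan is to reduce the count to a lattice-point count in $\R^2$, by combining the abelian structure of $H$ with the quantitative irreducibility bound \eqref{property of irreducible lattices} (and its symmetric counterpart, obtained by swapping the two coordinates, which holds because irreducibility of $\Gamma$ is symmetric).

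First I would show that $H$ embeds as a discrete subgroup of a product $Z_1 \times Z_2$ of two one-dimensional closed subgroups of $G = \PSL(2,\R)$. By \eqref{property of irreducible lattices} each projection $\pi_i:\Gamma \to G$ is injective, so for any non-identity $h_0 \in H$ we have $\pi_i(h_0) \neq e$; since $H$ is abelian, $\pi_i(H)$ is contained in the centralizer $Z_i := Z_G(\pi_i(h_0))$, a one-dimensional closed subgroup of $G$ conjugate to the hyperbolic Cartan $A$, a unipotent subgroup $N$, or the compact rotation subgroup $K$. Unipotent ($N$) conjugates are ruled out by cocompactness of $\Gamma$ (the corresponding trace equation would force $\pi_2(h_0)$ also unipotent, so $h_0$ unipotent in $G \times G$, contradicting Godement's criterion), so each $Z_i$ is conjugate to either $A$ or $K$, and $H$ is discrete in the at-most-two-dimensional abelian Lie group $Z_1 \times Z_2$.

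In the principal case $Z_1 \cong Z_2 \cong A$, parametrize $Z_i = \{a_i(t) : t \in \R\}$; then $H$ corresponds to a discrete subgroup $\Lambda \subset \R^2$ of rank at most two. The constraints $\|\pi_1(h)\| \leq 100$ and $\|\pi_2(h)\| \leq N$ become $|t_h| \leq T_1 = O(1)$ and $|s_h| \leq T_2 = O(\log N)$. The first successive minimum of $\Lambda$ is bounded below: any $v = (t_0, s_0) \in \Lambda \setminus \{0\}$ with $|t_0|, |s_0| \leq 1$ corresponds to $h_0 \in H$ with $\|\pi_2(h_0)\| = O(1)$, whence \eqref{property of irreducible lattices} forces $|t_0| \asymp \|\pi_1(h_0) - e\| \gtrsim 1$, contradicting $|t_0| \leq 1$. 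Hence $\lambda_1(\Lambda) \gtrsim 1$, and Minkowski's second theorem gives $\operatorname{covol}(\Lambda) \gtrsim \lambda_1(\Lambda)^2 \gtrsim 1$. Standard lattice-point counting in the rectangle of area $O(\log N)$ then yields $|S| = O(\log N)$ in the rank-two case. The rank-one case is a direct calculation on powers of a generator $h_0 \leftrightarrow (t_0, s_0)$ with $t_0, s_0 \neq 0$ (by injectivity of each $\pi_i$): either $|s_0| \geq 1$ and the $\pi_2$-constraint gives $|n| = O(\log N)$; or $|s_0| < 1$, in which case \eqref{property of irreducible lattices} yields $|t_0| \gtrsim 1$ and the $\pi_1$-constraint gives $|n| = O(1)$.

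The remaining cases involve a compact factor. If both $Z_i \cong K$, then $H$ is a discrete subgroup of a compact group, hence finite. If $Z_1 \cong A$ and $Z_2 \cong K$, the $\pi_1$-constraint alone caps $|S|$ at $O(1)$, since $\|a_1(n t_0)\|$ grows exponentially in $n$. The delicate case is $Z_1 \cong K$, $Z_2 \cong A$: here $H$ is cyclic with generator $h_0 = (k_{\theta_0}, a(s_0))$ and $|S| \leq 2 T_2 / s_0 + 1$, so everything reduces to showing $s_0 \gtrsim 1$. The main obstacle is that \eqref{property of irreducible lattices} as stated provides no information in this situation --- $\|\pi_2(h_0)\|$ is bounded and $\|k_{\theta_0} - e\|$ is automatically bounded above. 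The resolution is to apply the symmetric form of \eqref{property of irreducible lattices}: $\|a(s_0) - e\| \gtrsim \|k_{\theta_0}\|^{-C} \gtrsim 1$, since the rotation $k_{\theta_0}$ has uniformly bounded operator norm. This gives $s_0 \gtrsim 1$ and completes the bound $|S| = O(\log N)$.
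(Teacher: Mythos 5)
Your overall plan---project to centralizers $Z_1\times Z_2$, split into cases $A\times A$, $K\times A$, $A\times K$, $K\times K$, and count---is workable and genuinely different from the paper's, which instead fixes the systole constant $\delta>0$ (from compactness of a fundamental domain: $\inf_{g\in G}\|g\gamma g^{-1}-e\|>\delta$ for every $\gamma\in\Gamma\setminus\{e\}$), conjugates $H$ into $T=K\times A$, $A\times K$, or $A\times A$, and then applies a single pigeonhole argument using that $\delta$-separation. Your casework is more explicit, and you are right to flag the unipotent-centralizer case, which the paper's list silently omits.

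However, there is a genuine gap in your separation bounds, and it runs through all the nontrivial cases: you conflate the coordinates $(t_h,s_h)$ in the parametrized $Z_1\times Z_2$ with the matrix-norm quantities that \eqref{property of irreducible lattices} actually controls. The centralizers are $Z_i = g_iAg_i^{-1}$ (or $g_iKg_i^{-1}$) for conjugators $g_i$ that depend on $H$ and are \emph{not} uniformly bounded. The spectral-radius inequality $\|\pi_i(h)\|\geq e^{|t_h|/2}$ gives the forward implications $\|\pi_1(h)\|\leq 100\Rightarrow|t_h|=O(1)$ and $\|\pi_2(h)\|\leq N\Rightarrow|s_h|=O(\log N)$ uniformly, which is fine. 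But your lower bound on $\lambda_1(\Lambda)$ needs the \emph{converse} direction: from $|t_0|,|s_0|\leq 1$ you assert $\|\pi_2(h_0)\|=O(1)$ and $|t_0|\asymp\|\pi_1(h_0)-e\|$. Neither holds uniformly, since $\|g_ia(u)g_i^{-1}\|$ and $\|g_ia(u)g_i^{-1}-e\|$ can be made arbitrarily large for fixed small $u$ as $\|g_i\|\to\infty$. Consequently \eqref{property of irreducible lattices} gives you no uniform lower bound on $|t_0|$, and the first-minimum estimate fails. The same issue undermines the $K\times A$ case: the generator of $H$ inside $\Gamma$ is $(g_1k_{\theta_0}g_1^{-1},g_2a(s_0)g_2^{-1})$, and the quantity $\|g_1k_{\theta_0}g_1^{-1}\|$ is \emph{not} uniformly bounded (conjugates of rotations by large $g_1$ have large norm), so ``$\|k_{\theta_0}\|^{-C}\gtrsim 1$'' is not the bound \eqref{property of irreducible lattices} supplies, and $\|g_2a(s_0)g_2^{-1}-e\|\gtrsim 1$ does not give $|s_0|\gtrsim 1$. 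What is needed at exactly this point---and what the paper introduces for exactly this purpose---is a conjugation-invariant separation constant, namely the systole bound $\inf_g\|g\gamma g^{-1}-e\|>\delta$: applying it with $g=(g_1,g_2)^{-1}$ yields $\max(\|a(t_1-t_2)-e\|,\|a(s_1-s_2)-e\|)>\delta$ (and similarly in the $K$ factor), hence $\ell^\infty$-separation $\gtrsim\delta$ of lattice points, after which your covering count goes through. Your proof as written omits this input, and \eqref{property of irreducible lattices} alone is not a substitute.

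Two minor points: Minkowski's second theorem is not needed once you have $\lambda_1(\Lambda)\gtrsim 1$; covering the $O(1)\times O(\log N)$ rectangle by boxes of diameter $<\lambda_1$ already gives $O(\log N)$. And in the $K\times A$ case, your deduction that $H$ is cyclic should be justified by noting that any torsion element of $K\times A$ has trivial $A$-component, which together with injectivity of $\pi_2$ forces $H$ torsion-free.
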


\begin{proof}
Let $\mathcal{F}$ be a compact fundamental domain for $G = \PSL (2, \R) \times \PSL (2, \R)$ with respect to $\Gamma$. Then
\begin{equation*}
\left\{ g (\Gamma \setminus \left\{ e \right\}) g ^{-1}: g \in G \right\} = \left\{ g (\Gamma \setminus \left\{ e \right\}) g ^{-1}: g \in \mathcal{F} \right\}
\end{equation*}
is a closed subset of $G$ not containing $e$. It follows that there is a $\delta > 0$ so that for every $\gamma \in \Gamma \setminus \left\{ e \right\}$
\begin{equation*}
\inf_ {g \in G} \norm {g \gamma g ^{-1} - e} > \delta
.\end{equation*}
Let now $H$ be an abelian subgroup of $G$. Since $\Gamma$ has a finite index torsion free subgroup, if $H$ contains only torsion elements then the order of $H$ is $O (1)$. Let now $\gamma \in H$ be an element of infinite order. The centralizer $C _ G (\gamma)$ of $\gamma$ is conjugate to either $T = K \times A, A \times K, A \times A$ with $K = \SO (2, \R) /\{ \pm 1 \}$ and $A < \PSL (2, \R)$ the diagonal group (if the centralizer is $K \times K$ then since $\Gamma$ is discrete we must have that $\gamma$ is a finite order). Both $K$ and $A$ have the properties that for any $h$ in either group $\norm {h-e} \geq c \inf_ {g \in \PSL (2, \R)} \norm {g h g ^{-1} - e}$ (with the standard choice of norm, we may even take $c = 1$).

It follows that if $g _ 0$ is chosen so that $g _ 0 H g _ 0 ^{-1} \leq T$
then
\begin{multline*}
g_0\left(H \cap \left\{ (h ', h ''): \norm {h '} \leq 100, \norm {h ''} \leq  N \right\}\right)g_0^{-1} \subset\\ 
 T \cap \left\{ (h ', h ''): \norm {h '} \leq 100/c, \norm {h ''} \leq N /c  \right\}
\end{multline*}
hence by the pigeonhole principle if this intersection is of cardinality $\geq C_1 \log N$ with sufficiently large $C _ 1$ there will be two distinct $\gamma _ 1, \gamma _ 2 \in H$ so that $\norm {g_0\gamma _ 1 \gamma _2 ^{-1}g_0^{-1} - e} < \delta$, in contradiction to the definition of~$\delta$.
\end{proof}

\begin{proof}[Proof of Lemma~\ref{HxH disjoint}]
As in the proof of Lemma~\ref{disjoint} a key observation is that if $\gamma_1, \gamma_2 \in \Gamma$ are such that there exist $g_i, g ' _ i, h_i \in G$ ($i=1,2$) with $\| g_i \|,\| g ' _i \| \leq e^N$ and $h _ i \in B (e ^ {- \kappa N}, \tau)$ satisfying
\begin{equation} \label{gamma property} (x,zg_i) = \gamma _ i (x h _ i, z g ' _i)
\end{equation}
then $\gamma_1$ and $\gamma_2$ commute.
Indeed, letting $\pi _ i$ denote the projection from $\PSL (2, \R) \times \PSL (2, \R)$ to the $i$ component ($i = 1, 2$) we have that
\begin{align*}
\|\pi _ 1 [\gamma _ 1, \gamma _ 2]-e\|&= \|\pi _ 1 (\gamma _ 1 \gamma _ 2 \gamma _ 1 ^{-1} \gamma _ 2 ^{-1})-e\|\\
& = \|x h _ 1 ^{-1} h _ 2 ^{-1} h _ 1 h _ 2 x ^{-1}-e\|= O(e ^ {- \kappa N})
\end{align*}
with an implicit constant depending only on $\tau$, while
\begin{equation*}
\norm {\pi _ 2 [\gamma _ 1, \gamma _ 2]} = \norm {zg_1 {g_1'}^{-1} g_2{g_2'}^{-1} {g_1'} g_1^{-1} g_2' g_2^{-1} z^{-1}} = O(e ^ {8N})
\end{equation*}
since for $g \in \SL(2,\R)$ the norms of $g$ and $g^{-1}$ are comparable.
Hence by \eqref{property of irreducible lattices} if $\kappa/8>C$ and $N$ is sufficiently large we may conclude that $[\gamma _ 1, \gamma _ 2] = 1$.

The lemma now follows by applying Proposition~\ref{abelian subgroups are small} to the set of all $\gamma_i$'s satisfying \eqref{gamma property}.
\end{proof}

\subsection{Recurrence for Joint Quasimodes on $\mathbb{H}\times\mathbb{H}$}

Here, we prove the analogue of Lemma~\ref{Hecke recurrence by propagation lemma} for our joint quasimodes on $\Gamma\backslash\mathbb{H}\times\mathbb{H}$.

\begin{Lemma}\label{hyperbolic recurrence by propagation lemma}
Let $\Phi_j $ be a sequence of $\omega_j$-quasimodes for $\Delta_2$ (the Laplacian operator on the second component) on  $\Gamma\backslash \PSL(2,\mathbb{R})\times\mathbb{H}$, with  $\omega_j\to 0$. Suppose that $|\Phi_j|^2d\vol$ converges weak-* to a measure $\mu$.  Consider a point $(x,z)\in \Gamma\backslash \PSL(2,\mathbb{R})\times\mathbb{H}$, and let $B$ be a small open ball around $e \in PSL(2,\mathbb{R})$, so that $xB\times \{z\}$ is a small ball in the first coordinate around $(x,z)$.  Then for every $(x, z) \in \supp \mu$ and $\eta \in (0,1)$,
$$\liminf_{j \to \infty} \frac{\eta^2\int_{x {B} \times \{ w : d(z,w)\leq N \}} |\Phi_j(x' , w)|^2 dx' dw}{\int_{x {B} \times \{ w : d(z,w)\leq \eta\}} |\Phi_j(x' , w)|^2 dx'  dw}\to \infty \qquad \text{as $N \to \infty$} $$
uniformly in $(x,z), \eta$, and the radius of $B$.
\end{Lemma}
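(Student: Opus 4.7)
The plan is to mirror the proof of Lemma~\ref{Hecke recurrence by propagation lemma}, using convolution by a suitable hyperbolic kernel on the second coordinate in place of the Hecke operator. After applying the obvious analogue of Lemma~\ref{quasimodes} we may assume $\Phi_j$ lies in a spectral window of width $\omega_j$ for $\Delta_2$ around an approximate parameter $\lambda_j = r^{(2)}_j$, which is bounded under the hypothesis of Theorem~\ref{HxH main}. Fix a large integer $L$. Using Dirichlet's theorem applied to $\lambda_j$ modulo $2\pi$ (the untempered case is trivial since then $\cos(T\lambda_j) = \cosh(T|\lambda_j|) \geq 1$ for all $T$), we choose $T_0 \geq 1$, bounded in terms of $L$ and the uniform bound on $\lambda_j$, such that $\cos(T_n \lambda_j) \geq 1/2$ for $T_n := n T_0$ and $n = 1, \ldots, L$. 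We then form the operator $\tilde K := \sum_{n=1}^L \tilde K_{T_n}$, where $\tilde K_{T_n}$ is convolution in the second coordinate by the cutoff kernel $\tilde k_{T_n}$ from Lemma~\ref{HxH operator}, supported in the ball of radius $4 T_n$.

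By construction $\tilde K$ acts on a $\Delta_2$-eigenfunction of parameter $r$ by multiplication by $h_{\tilde k}(r) \approx \sum_n \cos(T_n r)/\cosh(\pi r/2)$ (with cutoff error $O(1)$), so its eigenvalue at $\lambda_j$ is $a \gtrsim L$, and the quasimode hypothesis gives $\|\tilde K \Phi_j - a \Phi_j\|_2 = O_L(\omega_j)$. Cauchy--Schwarz applied in the convolution integral yields the pointwise bound
\[
|\tilde K \Phi_j(x', z')|^2 \leq \|\tilde k\|_2^2 \int_{B(z', 4 T_L)} |\Phi_j(x', w)|^2 \, dw.
\]
The crucial estimate, and the main technical hurdle, is that $\|\tilde k\|_2^2 \lesssim L$ (rather than the naive $L^2$ coming from the triangle inequality). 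This follows from Plancherel for the spherical transform on $\mathbb{H}$: expanding $|h_{\tilde k}(r)|^2$ produces $L$ diagonal terms $\cos^2(T_n r)/\cosh^2(\pi r/2)$, each contributing $O(1)$ when integrated against the Plancherel measure $r \tanh(\pi r)\, dr$, while the cross terms $\cos((T_n \pm T_m) r)/\cosh^2(\pi r/2)$ are Fourier transforms of a Schwartz weight evaluated at $|T_n \pm T_m| \geq T_0 \geq 1$ and sum to $O(L)$ by standard decay estimates.

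Integrating the pointwise bound over $xB \times B_\eta(z)$ in the variables $(x', z')$ and swapping the order of integration with $w$ (using $\vol(B_\eta(z)) \sim \eta^2$) gives
\[
\int_{xB \times B_\eta(z)} |\tilde K \Phi_j|^2 \lesssim \eta^2 L \int_{xB \times B(z, 4 T_L + \eta)} |\Phi_j|^2.
\]
On the other hand, the spectral bound yields $\int_{xB \times B_\eta(z)} |\tilde K \Phi_j|^2 \geq a^2 \int_{xB \times B_\eta(z)} |\Phi_j|^2 - O_L(\omega_j) \gtrsim L^2 \int_{xB \times B_\eta(z)} |\Phi_j|^2 - O_L(\omega_j)$. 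Dividing and passing to the limit as $j \to \infty$, using $\liminf_j \int_{xB \times B_\eta(z)} |\Phi_j|^2 > 0$ (since $(x,z) \in \supp \mu$) and $\omega_j \to 0$, yields
\[
\liminf_{j \to \infty} \frac{\eta^2 \int_{xB \times B(z, 4T_L + \eta)} |\Phi_j|^2}{\int_{xB \times B_\eta(z)} |\Phi_j|^2} \gtrsim L.
\]
Since $T_L = O(L)$, for any sufficiently large $N$ we can choose $L$ growing with $N$ subject to $4 T_L + \eta \leq N$, giving the desired divergence as $N \to \infty$; uniformity in $(x,z)$, $\eta$ and the radius of $B$ is manifest from the argument.
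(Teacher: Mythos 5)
Your proposal is correct and follows the same overall architecture as the paper's proof: a sum of $L$ truncated wave kernels at times in an arithmetic progression chosen by Dirichlet's theorem, a spherical-transform lower bound $\gtrsim L$ at the approximate parameter (using boundedness of $r^{(2)}_j$, with the untempered case handled separately), the quasimode estimate $\|\tilde K\Phi_j-a\Phi_j\|=O_L(\omega_j)$, Cauchy--Schwarz in the convolution, and integration over $xB\times B_\eta(z)$ with the $\eta^2$ coming from the volume of the small ball. The one genuine difference is how you obtain the crucial single factor of $L$ (rather than $L^2$): you bound $\|\tilde k\|_{L^2(\mathbb H)}^2\lesssim L$ globally via Plancherel for the spherical transform, using the decay of the cross terms $\cos((T_n\pm T_m)r)$ integrated against the smooth, exponentially decaying weight; the paper instead decomposes $\mathbb H$ into the $L+1$ annuli $A_0,\dots,A_L$ adapted to the kernels and uses the pointwise bounds (\ref{small t}), (\ref{large t}) of Lemma~\ref{H kernel} to show $\int_{A_l}|k|^2\lesssim 1$ on each annulus before applying Cauchy--Schwarz. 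The two mechanisms give the same quantitative content; yours is conceptually cleaner (orthogonality does the bookkeeping) but requires one small repair you glossed over: the pointwise bound $O(e^{-T_n})$ on the transform of the truncation error is not directly an $L^2$ bound against the infinite Plancherel measure, so you should instead estimate $\|k_{T_n}-\tilde k_{T_n}\|_{L^2}$ spatially (the tail bound $|k_T(t)|\lesssim \cosh(T)\,t^{-3/2}$ for $t\geq\sinh^2(2T)$ makes it exponentially small) and apply Plancherel only to the untruncated kernels. Also, $T_L=LT_0$ is $O(L^2)$ rather than $O(L)$ with your Dirichlet choice, but this is immaterial: all that matters is that the support radius is bounded in terms of $L$, exactly as in the paper's $400L^2$.
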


\begin{proof}
We proceed as in Lemma~\ref{Hecke recurrence by propagation lemma}, using a simplified version of the construction in section~\ref{HxH kernel}.  Let $L$ be a large integer, and $r^*$ an approximate spectral parameter for $\Phi_j$ for $\Delta_2$.  Find a $q \in \{1, \ldots, 100L\}$ so that
\begin{equation}\label{HxH choice of q}
|qr^* \bmod{2\pi}| \leq \frac{\pi}{50L}
\end{equation}
and set
$$k = \sum_{l=1}^L \tilde{k}_{2ql}$$
where $\tilde{k}_{2ql}$ is given by the spherical kernel of Lemma~\ref{H kernel}, cut off at radius $8ql$, as in section~\ref{HxH kernel}.  The spherical transform of $k$ therefore satisfies
$$\left| h_k(r) - \sum_{l=1}^L \frac{\cos(rl)}{\cosh(\pi r/2)}\right| \lesssim 1$$
for all $r$.
We computed in (\ref{small t}) and (\ref{large t}) that 
$$|\tilde{k}_{2lq}(t)| \leq | k_{2lq}(t)| \lesssim \left\{ 	\begin{array}{ccc}	(\cosh{2lq})^{-1/2} & \quad\quad & t\leq \cosh{2lq}\\ \cosh{2lq}\cdot t^{-3/2} & \quad\quad & t\geq \cosh{2lq}	\end{array}	\right.$$
so that for $\cosh{2lq} < t \leq \cosh{2(l+1)q} $ we have
\begin{eqnarray*}
k(t) & \lesssim & \sum_{1\leq l'\leq l} \cosh{2l'q}\cdot t^{-3/2} + \sum_{l+1\leq l'\leq L} (\cosh{2l'q})^{-1/2}\\
& \lesssim & \cosh{2lq}\cdot t^{-3/2} + (\cosh{2(l+1)q})^{-1/2}
\end{eqnarray*}

We wish to estimate
$$|(k \ast \Phi_j)(x,z)| = \left|\int_\mathbb{H} k(\sinh^2(d(z,w)/2)) \Phi_j(x,w)dw\right|$$
by dividing into the following annuli:
\begin{eqnarray*}
A_0 & = & \{ w : \sinh^2(d(z,w)/2) \leq \cosh{2q}\}\\
A_l & = & \{ w : \cosh{2lq}< \sinh^2(d(z,w)/2)\leq \cosh{2(l+1)q} \} \\
&&\qquad\qquad\qquad\qquad\qquad\qquad \qquad l=1,2,\ldots, L-1\\
A_L & = & \{ w : \cosh{2Lq} < \sinh^2(d(z,w)/2) \leq \sinh^2(4qL)\}
\end{eqnarray*}
which cover the support of $k$.
Applying Cauchy-Schwarz, we have
\begin{eqnarray*}
\lefteqn{|(k \ast \Phi_j)(x,z)| ^2}\\
& \lesssim & \left|\sum_{l=0}^L 	\int_{A_l} k (\sinh^2(d(z,w)/2))\Phi_j(x,w) dw	\right| ^2 \\
&\lesssim & L \sum_{l=0}^L  \left|\int_{A_l} k(\sinh^2(d(z,w)/2))\Phi_j(x,w) 	dw\right| ^2 \\
& \lesssim & L  \sum_{l=0}^{L} \left(\int_{A_l} |k(\sinh^2(d(z,w)/2))|^2dw  \int_{A_l}\left|\Phi_j(x,w) 	\right|^2 dw\right)
\end{eqnarray*}

Now recall that in hyperbolic polar coordinates around $z$, the infinitesimal hyperbolic area element is given by $d \theta dt$ with $t =\sinh ^2 (d (z, w) / 2)$. Thus for $1 \leq l \leq L-1$ we have the upper bound
\begin{eqnarray*}
\lefteqn{\int_{A_l} |k(\sinh^2(d(z,w)/2))|^2 dw}\\
& \lesssim & \int_{t=\cosh{2lq}}^{\cosh{2(l+1)q}} \Big(\cosh^2{2lq}\cdot t^{-3} + (\cosh{2(l+1)q})^{-1} \Big)dt\\
& \lesssim & \cosh^2{2lq}\int_{t \geq \cosh{2lq}} t^{-3} dt + (\cosh{2(l+1)q})^{-1}\int_{t \leq \cosh{2(l+1)q}} dt\\
& \lesssim & \cosh^2{2lq} (\cosh{2lq})^{-2} + (\cosh{2(l+1)q})^{-1} (\cosh{2(l+1)q}) \lesssim 1.
\end{eqnarray*}
Moreover, we similarly estimate the integral over $A_0$ and $A_L$ by
\begin{eqnarray*}
\int_{A_0} |k(\sinh^2(d(z,w)/2))|^2 dw & \lesssim & \int_{t=0}^{\cosh{2q}} (\cosh{2q})^{-1}dt\\
& \lesssim & 1\\
\int_{A_L} |k(\sinh^2(d(z,w)/2))|^2 dw & \lesssim & (\cosh{2(L-1)q})^2 \int_{t>\cosh{2(L-1)q}} t^{-3}dt\\
& \lesssim & 1.
\end{eqnarray*}

Plugging these back into our Cauchy-Schwarz estimate, we get
\begin{eqnarray}
|(k \ast \Phi_j)(x,z)| & \lesssim & L^{1/2}\left(\sum_{l=0}^{L} \int_{A_l}\left|\Phi_j(x,w) 	\right|^2 dw\right)^{1/2} \nonumber\\
& \lesssim & \left(L \int_{d(z,w)\leq 4Lq} |\Phi_j(x,w)|^2 dw\right)^{1/2} \nonumber\\
&\leq &
\left(L \int_{d(z,w)\leq 400L ^2} |\Phi_j(x,w)|^2 dw\right)^{1/2} \nonumber
\end{eqnarray}
and similarly for every $g \in B$
$$|(k \ast \Phi_j)(xg,z)| \lesssim \left(L \int_{d(z,w)\leq 400L ^2} |\Phi_j(xg,w)|^2 dw\right)^{1/2}$$

Now by considering separately the tempered ($r^*\in\mathbb{R}$) and untempered ($-\frac{1}{2}\leq ir^*\leq \frac{1}{2}$) cases, using in the former case (\ref{HxH choice of q}), it can be verified that
$$h(r^*) = \sum_{l=1}^L \tilde{h}_{2ql}(r^*) \gtrsim L$$
Since $\Phi_j$ is is an $\omega_j$-quasimode for $\Delta_2$
$$||k \ast \Phi_j - h(r^*)\Phi_j|| = O_L(\omega_j),$$
and it follows that
\begin{eqnarray*}
\lefteqn{h(r^*)^2 \int_ {xB \times \{ w: d(z,w)<\eta \} } |\Phi_j|^2} \\ & \leq & \int_{xB \times \{ w: d(z,w)<\eta \} } |k\ast \Phi_j|^2 + O_{L,B,\eta}(\omega_j)\\
& \lesssim & L \int_{xB \times \{ d(z,w)\leq \eta \}} \int_ {\{ d(w',w)< 400L ^2 \} } |\Phi_j(y,w')|^2dwdw'dy + O_{L,B,\eta}(\omega_j) \\
&\lesssim & L \eta ^2 \int_{xB \times \{ w: d(z,w)<400 L ^2 + 1 \} } |\Phi_j|^2 + O_{L,B,\eta}(\omega_j)
\end{eqnarray*}
Since $h(r^*)^2 \gtrsim L^2$, $\omega_j \to 0$, and $(x, z) \in \supp \mu$, we conclude that
$$\liminf_{j \to \infty} \frac{\eta ^2 \int_ { xB \times \{ d(z,w)\leq 400L ^2 +1 \}}  |\Phi_j|^2}{\int_ { xB \times \{ d(z,w)\leq \eta \}} |\Phi_j|^2} \gtrsim L.$$
\end{proof}

\def\cprime{$'$}
\begin{bibdiv}
\begin{biblist}

\bib{AKN}{misc}{
      author={Anantharaman, Nalini},
      author={Koch, Herbert},
      author={Nonnenmacher, St{\'e}phane},
       title={Entropy of eigenfunctions},
        date={2009},
         url={http://arxiv.org/abs/0704.1564},
        note={to appear in Proceedings of ICMP 2006},
}

\bib{ANb}{article}{
      author={Anantharaman, Nalini},
      author={Nonnenmacher, St{\'e}phane},
       title={Entropy of semiclassical measures of the {W}alsh-quantized
  baker's map},
        date={2007},
        ISSN={1424-0637},
     journal={Ann. Henri Poincar\'e},
      volume={8},
      number={1},
       pages={37\ndash 74},
      review={\MR{MR2299192 (2008f:81086)}},
}

\bib{AN}{article}{
      author={Anantharaman, Nalini},
      author={Nonnenmacher, St{\'e}phane},
       title={Half-delocalization of eigenfunctions for the {L}aplacian on an
  {A}nosov manifold},
        date={2007},
        ISSN={0373-0956},
     journal={Ann. Inst. Fourier (Grenoble)},
      volume={57},
      number={7},
       pages={2465\ndash 2523},
        note={Festival Yves Colin de Verdi{\`e}re},
      review={\MR{MR2394549}},
}

\bib{Anan}{article}{
      author={Anantharaman, Nalini},
       title={Entropy and the localization of eigenfunctions},
        date={2008},
        ISSN={0003-486X},
     journal={Ann. of Math. (2)},
      volume={168},
      number={2},
       pages={435\ndash 475},
      review={\MR{MR2434883}},
}

\bib{AnanSilberman}{article}{
      author={{Anantharaman}, N.},
      author={{Silberman}, L.},
       title={{A Haar component for quantum limits on locally symmetric
  spaces}},
        date={2010},
     journal={ArXiv e-prints},
      eprint={1009.4927},
}

\bib{BorLin}{article}{
      author={Bourgain, Jean},
      author={Lindenstrauss, Elon},
       title={Entropy of quantum limits},
        date={2003},
        ISSN={0010-3616},
     journal={Comm. Math. Phys.},
      volume={233},
      number={1},
       pages={153\ndash 171},
      review={\MR{MR1957735 (2004c:11076)}},
}

\bib{meElon}{misc}{
      author={Brooks, Shimon},
      author={Lindenstrauss, Elon},
       title={Non-localization of eigenfunctions on large regular graphs},
        date={2011},
         url={http://arxiv.org/abs/0912.3239},
        note={to appear in Israel Jour. Math.},
}

\bib{localized_example}{misc}{
      author={Brooks, Shimon},
       title={Partially localized quasimodes in large subspaces},
        date={2011},
        note={preprint},
}

\bib{Eichler}{book}{
      author={Eichler, M.},
       title={Lectures on modular correspondences},
   publisher={Tata Institute of Fundamental Research},
        date={1965},
      volume={9},
        note={Notes by S. Rangachari},
}

\bib{EKL}{article}{
      author={Einsiedler, Manfred},
      author={Katok, Anatole},
      author={Lindenstrauss, Elon},
       title={Invariant measures and the set of exceptions to {L}ittlewood's
  conjecture},
        date={2006},
        ISSN={0003-486X},
     journal={Ann. of Math. (2)},
      volume={164},
      number={2},
       pages={513\ndash 560},
      review={\MR{MR2247967}},
}

\bib{FNDB}{article}{
      author={Faure, Fr{\'e}d{\'e}ric},
      author={Nonnenmacher, St{\'e}phane},
      author={De~Bi{\`e}vre, Stephan},
       title={Scarred eigenstates for quantum cat maps of minimal periods},
        date={2003},
        ISSN={0010-3616},
     journal={Comm. Math. Phys.},
      volume={239},
      number={3},
       pages={449\ndash 492},
      review={\MR{MR2000926 (2005a:81076)}},
}

\bib{Holowinsky-Sound}{article}{
      author={Holowinsky, Roman},
      author={Soundararajan, Kannan},
       title={Mass equidistribution for {H}ecke eigenforms},
        date={2010},
        ISSN={0003-486X},
     journal={Ann. of Math. (2)},
      volume={172},
      number={2},
       pages={1517\ndash 1528},
      review={\MR{2680499 (2011i:11061)}},
}

\bib{IwaniecSarnak}{article}{
      author={Iwaniec, H.},
      author={Sarnak, P.},
       title={{\(L^\infty\)} norms of eigenfunctions of arithmetic surfaces},
        date={1995},
        ISSN={0003486X},
     journal={The Annals of Mathematics},
      volume={141},
      number={2},
       pages={301\ndash 320},
         url={http://www.jstor.org/stable/2118522},
}

\bib{Iwaniec}{book}{
      author={Iwaniec, Henryk},
       title={Spectral methods of automorphic forms},
     edition={Second},
      series={Graduate Studies in Mathematics},
   publisher={American Mathematical Society},
     address={Providence, RI},
        date={2002},
      volume={53},
        ISBN={0-8218-3160-7},
      review={\MR{1942691 (2003k:11085)}},
}

\bib{KelPert}{article}{
      author={Kelmer, Dubi},
       title={Scarring on invariant manifolds for perturbed quantized
  hyperbolic toral automorphisms},
        date={2007},
        ISSN={0010-3616},
     journal={Comm. Math. Phys.},
      volume={276},
      number={2},
       pages={381\ndash 395},
         url={http://dx.doi.org/10.1007/s00220-007-0331-2},
      review={\MR{2346394 (2009b:81067)}},
}

\bib{LinHxH}{article}{
      author={Lindenstrauss, Elon},
       title={On quantum unique ergodicity for {$\Gamma\backslash\mathbb{H}
  \times\mathbb{H}$}},
        date={2001},
        ISSN={1073-7928},
     journal={Internat. Math. Res. Notices},
      number={17},
       pages={913\ndash 933},
      review={\MR{MR1859345 (2002k:11076)}},
}

\bib{Lin}{article}{
      author={Lindenstrauss, Elon},
       title={Invariant measures and arithmetic quantum unique ergodicity},
        date={2006},
        ISSN={0003-486X},
     journal={Ann. of Math. (2)},
      volume={163},
      number={1},
       pages={165\ndash 219},
      review={\MR{MR2195133 (2007b:11072)}},
}

\bib{Margulis-book}{book}{
      author={Margulis, G.~A.},
       title={Discrete subgroups of semisimple {L}ie groups},
      series={Ergebnisse der Mathematik und ihrer Grenzgebiete (3) [Results in
  Mathematics and Related Areas (3)]},
   publisher={Springer-Verlag},
     address={Berlin},
        date={1991},
      volume={17},
        ISBN={3-540-12179-X},
      review={\MR{1090825 (92h:22021)}},
}

\bib{Raghunathan}{book}{
      author={Raghunathan, M.~S.},
       title={Discrete subgroups of {L}ie groups},
   publisher={Springer-Verlag},
     address={New York},
        date={1972},
        note={Ergebnisse der Mathematik und ihrer Grenzgebiete, Band 68},
      review={\MR{0507234 (58 \#22394a)}},
}

\bib{Riv2}{article}{
      author={Rivi{\`e}re, Gabriel},
       title={Entropy of semiclassical measures for nonpositively curved
  surfaces},
        date={2010},
        ISSN={1424-0637},
     journal={Ann. Henri Poincar\'e},
      volume={11},
      number={6},
       pages={1085\ndash 1116},
         url={http://dx.doi.org/10.1007/s00023-010-0055-2},
      review={\MR{2737492}},
}

\bib{Riv1}{article}{
      author={Rivi{\`e}re, Gabriel},
       title={Entropy of semiclassical measures in dimension 2},
        date={2010},
        ISSN={0012-7094},
     journal={Duke Math. J.},
      volume={155},
      number={2},
       pages={271\ndash 336},
         url={http://dx.doi.org/10.1215/00127094-2010-056},
      review={\MR{2736167}},
}

\bib{RS}{article}{
      author={Rudnick, Ze{\'e}v},
      author={Sarnak, Peter},
       title={The behaviour of eigenstates of arithmetic hyperbolic manifolds},
        date={1994},
        ISSN={0010-3616},
     journal={Comm. Math. Phys.},
      volume={161},
      number={1},
       pages={195\ndash 213},
      review={\MR{MR1266075 (95m:11052)}},
}

\bib{SarnakHyperbolic}{article}{
      author={Sarnak, Peter},
       title={Spectra of hyperbolic surfaces},
        date={2003},
        ISSN={0273-0979},
     journal={Bull. Amer. Math. Soc. (N.S.)},
      volume={40},
      number={4},
       pages={441\ndash 478 (electronic)},
         url={http://dx.doi.org/10.1090/S0273-0979-03-00991-1},
      review={\MR{1997348 (2004f:11107)}},
}

\bib{SarnakProgress}{article}{
      author={Sarnak, Peter},
       title={Recent progress on the quantum unique ergodicity conjecture},
        date={2011},
        ISSN={0273-0979},
     journal={Bull. Amer. Math. Soc. (N.S.)},
      volume={48},
      number={2},
       pages={211\ndash 228},
         url={http://dx.doi.org/10.1090/S0273-0979-2011-01323-4},
      review={\MR{2774090}},
}

\bib{Sound}{article}{
      author={Soundararajan, Kannan},
       title={Quantum unique ergodicity for
  $sl(2,\mathbb{Z})\backslash\mathbb{H}$},
        date={2010},
     journal={Ann. of Math. (2)},
      volume={172},
      number={2},
       pages={1529\ndash 1538},
}

\bib{Lior-Akshay}{article}{
      author={Silberman, Lior},
      author={Venkatesh, Akshay},
       title={On quantum unique ergodicity for locally symmetric spaces},
        date={2007},
        ISSN={1016-443X},
     journal={Geom. Funct. Anal.},
      volume={17},
      number={3},
       pages={960\ndash 998},
      review={\MR{MR2346281 (2009a:81072)}},
}

\bib{Lior-Akshay2}{misc}{
      author={Silberman, Lior},
      author={Venkatesh, Akshay},
       title={Entropy bounds for hecke eigenfunctions on division algebras},
        date={2010},
        note={to appear in GAFA},
}

\bib{Walters-book}{book}{
      author={Walters, Peter},
       title={An introduction to ergodic theory},
      series={Graduate Texts in Mathematics},
   publisher={Springer-Verlag},
     address={New York},
        date={1982},
      volume={79},
        ISBN={0-387-90599-5},
      review={\MR{648108 (84e:28017)}},
}

\bib{Wolpert}{article}{
      author={Wolpert, Scott~A.},
       title={Semiclassical limits for the hyperbolic plane},
        date={2001},
        ISSN={0012-7094},
     journal={Duke Math. J.},
      volume={108},
      number={3},
       pages={449\ndash 509},
      review={\MR{MR1838659 (2003b:11051)}},
}

\end{biblist}
\end{bibdiv}

\bibliographystyle{alpha}
\bibliography{my}

\end{document}